\def\tablenotes{\bgroup\parfillskip=0pt plus 1fil
\leftskip=0pt\relax \rightskip=0pt
\vskip2pt\footnotesize}
\def\endtablenotes{\vskip1pt\egroup}
\def\sphline{\noalign{\vskip3pt}\hline\noalign{\vskip3pt}}
\def\Xint#1{\mathchoice
   {\XXint\displaystyle\textstyle{#1}}%
   {\XXint\textstyle\scriptstyle{#1}}%
   {\XXint\scriptstyle\scriptscriptstyle{#1}}%
   {\XXint\scriptscriptstyle\scriptscriptstyle{#1}}%
   \!\int}
\def\XXint#1#2#3{{\setbox0=\hbox{$#1{#2#3}{\int}$}
     \vcenter{\hbox{$#2#3$}}\kern-.5\wd0}}
\def\ddashint{\Xint=}
\def\dashint{\Xint-}
\def\xint{\Xint{\times}}
\newtheorem{theorem}{Theorem}
\newtheorem{lemma}[theorem]{Lemma}
\newdefinition{definition}[theorem]{Definition}
\theoremstyle{definition}
\newtheorem*{remarks}{Remarks}
\def\dkf{{\rm\,d}}
\def\I{{\rm i}}
\def\pr(#1){\left({#1}\right)}
\def\br[#1]{\left[{#1}\right]}
\def\fbr[#1]{\!\left[{#1}\right]\!}
\def\abs#1{\left|{#1}\right|}
\def\Jin{J_+^{-1}}
\def\CC{{\cal C}}
\def\half{\dfrac{1}{2}}
\def\E{{\rm e}}
\def\addtab#1={#1\;&=}
\newcommand{\sotodo}{\todo[color=green]}
\newcommand{\sotodoinline}{\todo[color=green,inline=true]}
\journal{Journal of Computational Physics}
\begin{document}

\begin{frontmatter}



\title{A fast and well-conditioned spectral method for singular integral equations}


\author[RMS]{Richard Mikael Slevinsky\corref{cor1}}
\ead{Richard.Slevinsky@umanitoba.ca}

\author[SO]{Sheehan Olver}
\ead{Sheehan.Olver@sydney.edu.au}

\cortext[cor1]{Corresponding author}
\address[RMS]{Department of Mathematics, University of Manitoba, Winnipeg, Canada}
\address[SO]{School of Mathematics and Statistics, The University of Sydney, Sydney, Australia.}

\begin{abstract}
We develop a spectral method for solving univariate singular integral equations over unions of intervals by utilizing Chebyshev and ultraspherical polynomials to reformulate the equations as almost-banded infinite-dimensional systems. This is accomplished by utilizing low rank approximations for sparse representations of the bivariate kernels. The resulting system can be solved in ${\cal O}(m^2n)$ operations using an adaptive QR factorization, where $m$ is the bandwidth and $n$ is the optimal number of unknowns needed to resolve the true solution.  The complexity is reduced to ${\cal O}(m n)$ operations by pre-caching the QR factorization when the same operator is used for multiple right-hand sides.  Stability is proved by showing that the resulting linear operator can be diagonally preconditioned to be a compact perturbation of the identity.  Applications considered include the Faraday cage, and acoustic scattering for the Helmholtz and gravity Helmholtz equations, including spectrally accurate numerical evaluation of the far- and near-field solution. The {\sc Julia} software package {\tt SingularIntegralEquations.jl} implements our method with a convenient, user-friendly interface.
\end{abstract}

\begin{keyword}
Spectral method \sep ultraspherical polynomials \sep singular integral equations



\MSC[2010] 65N35 \sep 65R20 \sep 33C45 \sep 31A10.%

\end{keyword}

\end{frontmatter}


\section{Introduction}

Singular integral equations are prevalent in the study of fracture mechanics~\cite{Erdogan-37-171-00}, acoustic scattering problems~\cite{Colton-Kress-83,Kress-61-345-95,Huybrechs-Vandewalle-29-2305-07,Kress-10,Hewett-Langdon-Melenk-51-629-13}, Stokes flow~\cite{Young-et-al-211-1-06}, Riemann--Hilbert problems~\cite{Olver-122-305-12}, and beam physics~\cite{Constantinou-Thesis-91,Heller-91}. We develop a fast and stable algorithm for the solution of univariate singular integral equations of general form~\cite{Muskhelishvili-53}
\begin{equation}\label{eq:SIE}
\xint_\Gamma K(x,y)u(y){\rm\,d}y = f(x),\quad{\rm for}\quad x\in\Gamma,\qquad {\cal B}u = {\bf c},
\end{equation}
where $K(x,y)$ is singular along the line $y=x$, the $\times$ in the integral sign denotes either the Cauchy principal value or the Hadamard finite-part, $\Gamma$ is a union of bounded smooth open arcs in $\mathbb{R}^2$, and ${\cal B}$ is a list of functionals. To be precise, we consider the prototypical singular integral equations on $[-1,1]$ given by:
$$
\dfrac{1}{\pi}\ddashint_{-1}^1 \left(\dfrac{K_1(x,y)}{(y-x)^2} + \dfrac{K_2(x,y)}{y-x} + \log|y-x| K_3(x,y) + K_4(x,y)\right) u(y){\rm\,d}y = f(x),\quad{\rm for}\quad x\in[-1,1],
$$
where $K_1,\ldots,K_4$ are continuous bivariate kernels.

In this work, we use several remarkable properties of Chebyshev polynomials including their spectral convergence, explicit formul\ae~for their Hilbert and Cauchy transforms, and low rank bivariate approximations to construct a fast and well-conditioned spectral method for solving univariate singular integral equations. Chebyshev and ultraspherical polynomials are utilized to convert singular integral operators into numerically banded infinite-dimensional operators. To represent bivariate kernels, we use the low rank approximations of~\cite{Townsend-Trefethen-35-C495-13}, where expansions in Chebyshev polynomials are constructed via sums of outer products of univariate Chebyshev expansions. The minimal solution to the recurrence relation is automatically revealed by the adaptive QR factorization of~\cite{Olver-Townsend-55-462-13}. Diagonal right preconditioners are derived for integral equations encoding Dirichlet and Neumann boundary conditions such that the preconditioned operators are compact perturbations of the identity.

The inspiration behind the proposed numerical method is the ultraspherical spectral method for solving ordinary differential equations~\cite{Olver-Townsend-55-462-13}, where ordinary differential equations are converted to infinite-dimensional almost banded linear systems (an almost banded operator is a banded operator apart from a finite number of dense rows). These systems can be solved in infinite-dimensions, i.e., without truncating the operators~\cite{Olver-Townsend-57-14}, as implemented in {\tt ApproxFun.jl}~\cite{ApproxFun} in the {\sc Julia} programming language~\cite{Julia-12,Julia-14}. The {\sc Julia} software package {\tt SingularIntegralEquations.jl}~\cite{SingularIntegralEquations} implements our method with a convenient, user-friendly interface. As an extension of this framework for infinite-dimensional linear algebra, mixed equations involving derivatives and singular integral operators can be solved in a unified way.

\bigskip

Several classical numerical methods exist for singular Fredholm integral equations of the first kind. These include: the Nystr\"om method~\cite{Berthold-Junghanns-30-1351-93,Bruno-Lintner-12,Lintner-Bruno-12}, whereby integral operators are approximated by quadrature rules; the collocation method~\cite{Elliott-13-1041-82,Elliott-19-816-82}, where approximate solutions in a finite-dimensional subspace are required to satisfy the integral equation at a finite number of collocation points; and the Galerkin method~\cite{Galerkin-19-897-15,Smigaj-et-al-41-1-15}, where the approximate solution is sought from an orthogonal subspace and is minimal in the energy norm. The use of hybrid Gauss-trapezoidal quadrature rules~\cite{Kress-15-229-91,Alpert-20-1551-99,Helsing-Ojala-227-2899-08,Hao-Barnett-Martinsson-Young-40-245-14} can significantly increase the convergence rates when treating weakly singular kernels.

Numerous methods have exploited the underlying structure of the linear systems arising from discretizing integral equations. The most celebrated of these is the Fast Multipole Method of Greengard and Rokhlin~\cite{Greengard-Rokhlin-73-325-87}. Other characterizations in terms of semi-separability or other hierarchies have also gained prominence~\cite{Hackbusch-Nowak-54-463-89,Ambikasaran-Darve-57-477-13,Aminfar-Ambikasaran-Darve-14}. Exploiting the matrix structure allows for fast matrix-vector products, which then allows for Krylov subspace methods~\cite{Krylov-7-491-31} to be extremely competitive. For scattering of the Helmholtz equation in very special geometries, hybrid numerical-asymptotic methods have been derived for frequency-independent solutions to the Dirichlet and Neumann problems~\cite{Huybrechs-Vandewalle-29-2305-07,Martinsson-Rokhlin-221-288-07,Hewett-Langdon-Melenk-51-629-13,Hewett-Langdon-Chandler-Wilde-14}.

Previous works on Chebyshev-based methods for singular integral equations include Frenkel~\cite{Frenkel-51-326-83}, which derives recurrence relations for the Chebyshev expansion of a singular integral equation after expanding the bivariate kernel in a basis of Chebyshev polynomials of the first kind in both variables, and Chan et al.~\cite{Chan-Thesis-01,Chan-Fannjiang-Paulino-41-683-03} in fracture mechanics, among others.  A similar analysis in~\cite{Frenkel-51-335-83} is used for hypersingular integrodifferential equations by expanding the bivariate kernel in a basis of Chebyshev polynomials of the second kind. This paper is an extension of these ideas with essential practical numerical considerations.

\begin{remarks}~
\begin{enumerate}
\item Combined with fast multiplication of Chebyshev series, our method is suitable for use in iterative Krylov subspace methods.
\item There is a great diversity of integral equation formulations. The choice of formulation depends on many properties, including for example, whether the boundary is open or closed and whether there are resonances. Most equations involve operators that contain manipulations of the fundamental solution, which would still satisfy the requirements of our method. However, we focus on the direct integral equations to retain a simple exposition.
\end{enumerate}
\end{remarks}

\section{Boundary integral equations in two dimensions}

In two dimensions, let ${\bf x} = (x_1,x_2)$ and ${\bf y} = (y_1,y_2)$. Positive definite second-order linear elliptic partial differential operators (PDOs) with variable coefficients are always reducible to the following {\it canonical form}~\cite{Vekua-67}:
\begin{equation}\label{eq:PDO}
{\bf L}\{u\} = \Delta u +a \dfrac{\partial u}{\partial x_1} + b\frac{\partial u}{\partial x_2} + cu.
\end{equation}
Let $\Phi({\bf x},{\bf y})$ denote the positive definite {\it fundamental solution} of~\eqref{eq:PDO} satisfying the formal partial differential equation (PDE)
\begin{equation}
{\bf L_x}\{\Phi\} = -\delta({\bf x}-{\bf y}),
\end{equation}
where $\delta$ is the two-dimensional Dirac delta distribution and the subscript indicates that ${\bf L}$ is acting in the ${\bf x}$ variable.

\subsection{Exterior scattering problems}

Let $\Gamma$ be a union of disjoint bounded smooth open arcs in $\mathbb{R}^2$ and let $\Omega:=\mathbb{R}^2\setminus\overline{\Gamma}$.

Let $\hat{u}({\bf x}) := \dfrac{1}{2\pi}\int_{\mathbb{R}^2} e^{-{\rm i}{\bf x}\cdot{\bf y}}u({\bf y}){\rm\,d}{\bf y}$ be the standard Fourier transform in $\mathbb{R}^2$. Then for $s\in\mathbb{R}$, $H^s(\mathbb{R}^2)$ defines the Bessel potential space as the set of normed tempered distributions equipped with:
\begin{equation}
\left\|u\right\|_{H^s(\mathbb{R}^2)}^2 = \int_{\mathbb{R}^2}(1+|{\bf x}|^2)^s|\hat{u}({\bf x})|^2{\rm\,d}{\bf x}.
\end{equation}
For bounded $\Omega$ with non-empty interior, the space $H^s(\Omega) := \left\{u|_\Omega:u\in H^s(\mathbb{R}^2)\right\}$. Furthermore, let $H^s_{\rm loc}(\Omega)$ denote the spaced of locally integrable functions in $H^s(\Omega)$ and lastly, let $\tilde{H}^s(\Omega) :=\left\{u\in H^s(\mathbb{R}^2):{\rm\,supp\,} u \subset \overline{\Omega}\right\}$.

\begin{definition}
For ${\bf x}\in \Omega$, let ${\cal S}_\Gamma$ and ${\cal D}_\Gamma$ define the single- and double-layer potentials:
\begin{align}
{\cal S}_\Gamma u({\bf x}) & = \int_\Gamma \Phi({\bf x},{\bf y})u({\bf y}){\rm\,d}\Gamma({\bf y}) : \tilde{H}^{-\frac{1}{2}}(\Gamma) \to H^1_{\rm loc}(\Omega),\\
{\cal D}_\Gamma u({\bf x}) & = \int_\Gamma \dfrac{\partial\Phi({\bf x},{\bf y})}{\partial n({\bf y})}u({\bf y}){\rm\,d}\Gamma({\bf y}) : \tilde{H}^{\frac{1}{2}}(\Gamma) \to H^1_{\rm loc}(\Omega).
\end{align}
\end{definition}

\begin{definition}[Radiation condition at infinity~\cite{Costabel-Dauge-20-133-97}]
We say that $u\in H^1_{\rm loc}(\Omega)$ satisfies the {\em radiation condition at infinity} if:
\begin{equation}
\lim_{\rho\to+\infty}\left\{ {\cal S}_{|{\bf y}| = \rho}\left[\partial u/\partial n\right]({\bf x}) - {\cal D}_{|{\bf y}| = \rho}\left[u\right]({\bf x}) \right\} = 0,\quad{\rm for}\quad {\bf x}\in \Omega.
\end{equation}
\end{definition}

For solutions of the homogeneous equation ${\bf L}\{u\} = 0$ satisfying the radiation condition at infinity, Green's representation theorem allows for the determination of the exterior solutions given data on the boundary $\Gamma$:
\begin{equation}\label{eq:Greenstheorem}
u({\bf x}) = -{\cal S}_\Gamma\left[\partial u/\partial n\right]({\bf x}) + {\cal D}_\Gamma\left[u\right]({\bf x}),\quad{\rm for}\quad {\bf x}\in \Omega.
\end{equation}
Here, $[u]$ denotes the jump in $u$ along $\Gamma$ and $[\partial u/\partial n]$ the jump in its normal derivative. These are formally defined by the Dirichlet trace and conormal derivative~\cite{Smigaj-et-al-41-1-15}, or in the case of the Laplace equation, simply as the difference between the limiting values on $\Gamma$ as we approach from the left and the right. This identity can be interpreted as representing $u$ in terms of the potential of a distribution of poles on $\Gamma$ through the single-layer and normal dipoles on $\Gamma$ through the double layer. With either Dirichlet or Neumann boundary conditions, we restrict~\eqref{eq:Greenstheorem} to the boundary and solve for the unknown boundary value. Once both quantities on the boundary are determined, the solution to the exterior problem is readily available in integral form.

{\bf Dirichlet Problem}
Given an incident wave $u^{i}({\bf x})\in H^{\frac{1}{2}}(\Gamma)$ satisfying ${\bf L}\{u^i\} = 0$, find $u^s({\bf x}) \in H^1_{\rm loc}(\Omega)$ satisfying ${\bf L}\{u^s\} = 0$, the radiation condition at infinity, and
\begin{equation}
u^i({\bf x}) + u^s({\bf x}) = 0,\quad{\rm for}\quad {\bf x}\in\Gamma.
\end{equation}

{\bf Neumann Problem}
Given $\dfrac{\partial u^{i}({\bf x})}{\partial n({\bf x})} \in H^{-\frac{1}{2}}(\Gamma)$ satisfying ${\bf L}\{u^i\} = 0$, find $u^s({\bf x}) \in H^1_{\rm loc}(\Omega)$ satisfying ${\bf L}\{u^s\} = 0$, the radiation condition at infinity, and
\begin{equation}
\dfrac{\partial}{\partial n({\bf x})}\left(u^i({\bf x}) + u^s({\bf x})\right) = 0,\quad{\rm for}\quad {\bf x}\in\Gamma.
\end{equation}

For the case of the Laplace and Helmholtz equations, the Dirichlet problem is originally formulated in~\cite[Eqs. (1.1) \& (1.2), (1.6) \& (1.7)]{Stephan-Wendland-18-183-84}. Similarly, the Neumann problem is originally formulated in~\cite[Eqs. (1.1) \& (1.2)]{Wendland-Stephan-112-363-90}.

{\bf Dirichlet Solution}
The Dirichlet problem is solved by~\eqref{eq:Greenstheorem} where $[u^s] = 0$, and the scattered solution is represented everywhere by the single-layer potential. The density $\left[\partial u^s/\partial n\right]\in \tilde{H}^{-\frac{1}{2}}(\Gamma)$ in~\eqref{eq:Greenstheorem} satisfies:
\begin{equation}\label{eq:DirichletIE}
\int_\Gamma \Phi({\bf x},{\bf y})\left[\dfrac{\partial u^s}{\partial n}\right]{\rm d}\Gamma({\bf y}) = u^i({\bf x}),\quad{\rm for}\quad {\bf x}\in\Gamma.
\end{equation}

{\bf Neumann Solution}
The Neumann problem is solved by~\eqref{eq:Greenstheorem} where $\left[\partial u^s/\partial n\right] = 0$, and the scattered solution is represented everywhere by the double-layer potential. The density $\left[u^s\right]\in\tilde{H}^{\frac{1}{2}}(\Gamma)$ in~\eqref{eq:Greenstheorem} satisfies:
\begin{equation}\label{eq:NeumannIE}
\dfrac{\partial}{\partial n({\bf x})}\int_\Gamma \dfrac{\partial \Phi({\bf x},{\bf y})}{\partial n({\bf y})}\left[u^s\right]{\rm d}\Gamma({\bf y}) = -\dfrac{\partial u^i({\bf x})}{\partial n({\bf x})},\quad{\rm for}\quad {\bf x}\in\Gamma.
\end{equation}

For the case of the Laplace and Helmholtz equations, the Dirichlet solution is originally proved in~\cite[Theorems 1.4 \& 1.7]{Stephan-Wendland-18-183-84}. Similarly, the Neumann solution is originally proved in~\cite[Theorem 1.3]{Wendland-Stephan-112-363-90}. Furthermore, by appealing to the theory of Mellin transforms, inverse square root singular behaviour is derived for the open ends of $\Gamma$ in the Dirichlet problem~\cite[Theorem 2.3]{Stephan-Wendland-18-183-84}, and square root singular behaviour is derived for the open ends of $\Gamma$ in the Neumann problem~\cite[Theorem 1.8]{Wendland-Stephan-112-363-90}.

For elliptic PDOs with variable coefficients, we use the solutions provided by the singular integral equations~\eqref{eq:DirichletIE} and~\eqref{eq:NeumannIE}, and while the scope of this paper is {\em numerical}, we conjecture that they hold more generally.

\subsection{Riemann functions}

In addition to the PDO in~\eqref{eq:PDO}, consider its adjoint:
\begin{equation}
{\bf L}^*\{v\} = \Delta v -\dfrac{\partial (av)}{\partial x_1} - \frac{\partial (bv)}{\partial x_2} + cv.
\end{equation}
With the change to complex characteristic variables:
\begin{equation}
z = x_1+\I x_2,\qquad \zeta = x_1-\I x_2, \qquad \qquad z_0 = y_1+\I y_2,\qquad \zeta_0 = y_1-\I y_2,
\end{equation}
${\bf L}$ and ${\bf L}^*$ take the form:
\begin{align}
\hat{\bf L}\{U\} & = \dfrac{\partial^2U}{\partial z\partial\zeta} + A\dfrac{\partial U}{\partial z} + B\dfrac{\partial U}{\partial \zeta} + CU,\\
\hat{\bf L}^*\{V\} & = \dfrac{\partial^2V}{\partial z\partial\zeta} - \dfrac{\partial (AV)}{\partial z} - \dfrac{\partial (BV)}{\partial \zeta} + CV,
\end{align}
where:
\begin{align}
A(z,\zeta) & = \dfrac{1}{4}\left[a\left(\frac{z+\zeta}{2},\frac{z-\zeta}{2\I}\right) + \I b\left(\frac{z+\zeta}{2},\frac{z-\zeta}{2\I}\right)\right],\label{eq:ellipticA}\\
B(z,\zeta) & = \dfrac{1}{4}\left[a\left(\frac{z+\zeta}{2},\frac{z-\zeta}{2\I}\right) - \I b\left(\frac{z+\zeta}{2},\frac{z-\zeta}{2\I}\right)\right],\label{eq:ellipticB}\\
C(z,\zeta) & = \dfrac{1}{4}c\left(\frac{z+\zeta}{2},\frac{z-\zeta}{2\I}\right).\label{eq:ellipticC}
\end{align}
\begin{theorem}[Vekua and Garabedian~\cite{Vekua-67,Garabedian-64}]
For analytic functions~\eqref{eq:ellipticA}--\eqref{eq:ellipticC}, there exist analytic functions $\mathfrak{R}(z,\zeta,z_0,\zeta_0)$ and $g_0(z,\zeta,z_0,\zeta_0)$ such that:
\begin{equation}\label{eq:GFcharacteristics}
\Phi(z,\zeta,z_0,\zeta_0) = -\dfrac{1}{4\pi}\mathfrak{R}(z,\zeta,z_0,\zeta_0)\log[(z-z_0)(\zeta-\zeta_0)] + g_0(z,\zeta,z_0,\zeta_0),
\end{equation}
where $\hat{\bf L}\{\Phi\} = 0$ in $(z,\zeta)$ and $\hat{\bf L}^*\{\Phi\} = 0$ in $(z_0,\zeta_0)$ so long as $z\ne z_0$ and $\zeta\ne\zeta_0$. In~\eqref{eq:GFcharacteristics}, $\mathfrak{R}$ is the Riemann function of the operator ${\bf L}$ satisfying:
\begin{align}
\hat{\bf L}^* \{\mathfrak{R}\} & = 0,\label{eq:RiemannL}\\
\mathfrak{R}(z_0,\zeta,z_0,\zeta_0) & = \exp\left\{ \int_{\zeta_0}^\zeta A(z_0,\tau){\rm\,d}\tau\right\},\quad{\rm and}\\\mathfrak{R}(z,\zeta_0,z_0,\zeta_0) & = \exp\left\{ \int_{z_0}^z B(t,\zeta_0){\rm\,d}t\right\}.\label{eq:Riemannbcs}
\end{align}  
\end{theorem}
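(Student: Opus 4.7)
The plan is to construct $\mathfrak{R}$ first as the unique analytic solution of the Goursat (characteristic Cauchy) problem \eqref{eq:RiemannL}--\eqref{eq:Riemannbcs}, and then to determine an analytic $g_0$ so that the ansatz \eqref{eq:GFcharacteristics} produces a bona fide fundamental solution with the required properties. Because the principal part of $\hat{\bf L}^*$ is $\partial^2/\partial z\partial\zeta$, the PDE $\hat{\bf L}^*\{\mathfrak{R}\}=0$ in the complex characteristic coordinates is in canonical hyperbolic form with characteristics $z=\mathrm{const}$ and $\zeta=\mathrm{const}$, so \eqref{eq:RiemannL}--\eqref{eq:Riemannbcs} is a classical Goursat problem. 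Integrating the PDE once in $z$ from $z_0$ and once in $\zeta$ from $\zeta_0$ converts it to a Volterra-type integral equation of the schematic form
\begin{equation*}
\mathfrak{R}(z,\zeta) = \mathfrak{R}(z_0,\zeta) + \mathfrak{R}(z,\zeta_0) - \mathfrak{R}(z_0,\zeta_0) + \int_{z_0}^z\!\!\int_{\zeta_0}^\zeta \mathcal{K}[\mathfrak{R}](t,\tau)\,\mathrm{d}\tau\,\mathrm{d}t,
\end{equation*}
with $\mathcal{K}$ a first-order differential-integral operator whose coefficients are analytic combinations of $A$, $B$, $C$. The exponential characteristic data \eqref{eq:Riemannbcs} are exactly those compatible with the reduction of the PDE to a first-order ODE on each characteristic, together with the normalization $\mathfrak{R}(z_0,\zeta_0,z_0,\zeta_0)=1$. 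Picard iteration converges in sup-norm on any sufficiently small polydisc about the diagonal point, because the analytic kernel is bounded, and delivers a unique $\mathfrak{R}$ jointly analytic in $(z,\zeta,z_0,\zeta_0)$.

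Next I would substitute the ansatz \eqref{eq:GFcharacteristics} into $\hat{\bf L}\{\Phi\}$ and expand by Leibniz. Writing $w=(z-z_0)(\zeta-\zeta_0)$ and using $\log w = \log(z-z_0) + \log(\zeta-\zeta_0)$, the evaluation of $\hat{\bf L}\{\mathfrak{R}\log w\}$ splits cleanly into three pieces: a logarithmic piece proportional to $\hat{\bf L}\{\mathfrak{R}\}\log w$, together with two terms that are a priori singular like $1/(z-z_0)$ and $1/(\zeta-\zeta_0)$ whose coefficients are first-order transport combinations of $\mathfrak{R}$ with $A$ and $B$. The characteristic data \eqref{eq:Riemannbcs} are tailored so that these transport coefficients vanish on the respective singular characteristics; the apparent poles are therefore analytic, and the residual equation for $g_0$ reduces to
\begin{equation*}
\hat{\bf L}\{g_0\}(z,\zeta,z_0,\zeta_0) = F(z,\zeta,z_0,\zeta_0)
\end{equation*}
with $F$ logarithm-free and jointly analytic in all four variables.

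The inhomogeneous equation for $g_0$ is then solved by the same Picard/Volterra procedure used for $\mathfrak{R}$, with any convenient analytic Cauchy data on the characteristics (e.g.\ $g_0=0$ on $z=z_0$ and $\zeta=\zeta_0$), which determines $g_0$ uniquely and analytically in $(z,\zeta,z_0,\zeta_0)$. Finally, to obtain the symmetric statement $\hat{\bf L}^*\{\Phi\}=0$ in $(z_0,\zeta_0)$, I would invoke the classical reciprocity principle for Riemann functions: apply the bilinear Green's identity for $\hat{\bf L}$ and $\hat{\bf L}^*$ over the characteristic rectangle with corners $(z,\zeta)$, $(z_0,\zeta)$, $(z,\zeta_0)$, $(z_0,\zeta_0)$ and collapse the boundary contributions using the prescribed data on $\mathfrak{R}$.

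I would expect this last step to be the main obstacle. The adjoint-harmonicity in $(z_0,\zeta_0)$ is not built directly into the Goursat construction, which treats $(z_0,\zeta_0)$ as inert parameters; recovering it is the content of the Vekua--Garabedian refinement of Riemann's method and requires a careful bilinear identity rather than a routine PDE estimate. The remainder is essentially Riemann's classical argument carried over to complex characteristics with analytic coefficients, which becomes routine once the Goursat problem is in hand.
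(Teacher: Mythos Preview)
The paper does not prove this theorem: it is quoted from Vekua and Garabedian with citation, and the only follow-up in the paper is the Remark recording the Volterra reformulation \eqref{eq:RiemannIE} of the Goursat problem and the consequence \eqref{eq:GFisAlogplusB}. So there is no in-paper proof to compare your proposal against; your sketch is essentially the classical construction those references carry out, and in particular your Volterra/Picard step for $\mathfrak{R}$ is exactly the content of the paper's equation~\eqref{eq:RiemannIE}.

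One technical caution about your sketch. When you expand $\hat{\bf L}\{\mathfrak{R}\log w\}$ by Leibniz you get
\[
\hat{\bf L}\{\mathfrak{R}\}\log w \;+\; \frac{\partial_\zeta\mathfrak{R}+A\,\mathfrak{R}}{z-z_0}\;+\;\frac{\partial_z\mathfrak{R}+B\,\mathfrak{R}}{\zeta-\zeta_0}.
\]
But $\mathfrak{R}$ satisfies the \emph{adjoint} equation $\hat{\bf L}^*\{\mathfrak{R}\}=0$, not $\hat{\bf L}\{\mathfrak{R}\}=0$, so the logarithmic coefficient is not immediately zero; and the characteristic data \eqref{eq:Riemannbcs} give $\partial_\zeta\mathfrak{R}-A\mathfrak{R}=0$ on $z=z_0$ (and similarly with $B$), which is the wrong sign to kill the pole numerators as you have written them. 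The resolution in Vekua/Garabedian is that these three residual terms are not dealt with separately: one rewrites $\hat{\bf L}\{\mathfrak{R}\}-\hat{\bf L}^*\{\mathfrak{R}\}$ as a combination of $\partial_z(\,\cdot\,)$ and $\partial_\zeta(\,\cdot\,)$ and integrates by parts against $\log w$, after which the boundary data do cancel the simple poles and only a jointly analytic right-hand side for $g_0$ remains. Your outline is correct in spirit, but the cancellation is a little more delicate than ``the transport coefficients vanish on the singular characteristics.''
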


\begin{remarks}~
It is straightforward to reformulate~\eqref{eq:RiemannL}--\eqref{eq:Riemannbcs} to the following integral equation:
\begin{align}
\mathfrak{R}(z,\zeta,z_0,\zeta_0) - \int_{z_0}^z B(t,\zeta)\mathfrak{R}(t,\zeta&,z_0,\zeta_0){\rm\,d}t - \int_{\zeta_0}^\zeta A(z,\tau)\mathfrak{R}(z,\tau,z_0,\zeta_0){\rm\,d}\tau\nonumber\\
& + \int_{z_0}^z\int_{\zeta_0}^\zeta C(t,\tau)\mathfrak{R}(t,\tau,z_0,\zeta_0){\rm\,d}\tau{\rm\,d}t = 1.\label{eq:RiemannIE}
\end{align}

Returning to the original coordinates ${\bf x}$ and ${\bf y}$, fundamental solutions for elliptic PDOs with analytic coefficients can be written as:
\begin{equation}\label{eq:GFisAlogplusB}
\Phi({\bf x},{\bf y}) = A({\bf x},{\bf y})\log\left|{\bf x}-{\bf y}\right| + B({\bf x},{\bf y}),
\end{equation}
where $A$ and $B$ are both analytic functions of ${\bf x}$ and ${\bf y}$ and where $A({\bf x},{\bf y}) = -\frac{1}{2\pi}\mathfrak{R}(z,\zeta,z_0,\zeta_0)$ implying $A({\bf x},{\bf x}) = -(2\pi)^{-1}$. If, furthermore, the PDO is formally self-adjoint, then $A$ and $B$ are also symmetric functions of ${\bf x}$ and ${\bf y}$.
\end{remarks}

\section{Practical approximation theory}

Chebyshev approximation theory is a very rich subject that has seen numerous exceptional contributions: see~\cite{Boyd-00,Mason-Handscomb-02,Trefethen-12} and the references therein. In this section, we describe some approximation spaces for one-dimensional intervals and two-dimensional squares. For every approximation space, one may consider the interpolants, which are equal to the function at a set of interpolation points, and the projections, which are truncations of the function's expansion. Unless an extraordinary amount of analytic information is known about a function, interpolants are generally easier to construct.

We consider an approximation space {\it practical} if there is a fast way to transform the interpolation condition into approximate projections. While a few methods exist to create fast transforms, all the practical approximation spaces we consider resort to some variation of the fast Fourier transform (FFT)~\cite{Cooley-Tukey-19-297-65,Frigo-Johnson-93-216-05} to reduce ${\cal O}(n^2)$ complexity to ${\cal O}(n\log n)$. Other properties which make an approximation space practical are: ${\cal O}(n)$ evaluation; a low Lebesgue constant; absolute, uniform, and geometric convergence with analyticity; and, easy manipulation for the development of new properties. For approximation on the canonical unit interval~~$\mathbb{I} := [-1,1]$, we will make our statements precise in the following subsection.

\subsection{One dimension}

Let $K$ be the field of $\mathbb{R}$ or $\mathbb{C}$. A function $f:\mathbb{I}\to K$ is of bounded total variation if:
\begin{equation}
V_f = \int_{\mathbb{I}} |f'(z)|{\rm\,d}z < +\infty.
\end{equation}

Chebyshev polynomials of the first kind are defined by~\cite{Mason-Handscomb-02}:
\begin{equation}
T_n(x) = \cos(n\cos^{-1}(x)), \quad{\rm for}\quad n\in\mathbb{N}_0,\quad{\rm and}\quad x\in\mathbb{I}.
\end{equation}
A Chebyshev interpolant to a continuous function $f:\mathbb{I}\to K$ is the approximation
\begin{equation}
p_N(x) = \sum_{n=0}^{N-1} c_n T_n(x),\quad x\in\mathbb{I},
\end{equation}
which interpolates $f$ at the Chebyshev points of the first kind:
\begin{equation}
p_N(x_n) = f(x_n)\quad{\rm where}\quad x_n = \cos\left(\dfrac{2n+1}{2N}\pi\right),\quad{\rm for}\quad n=0,\ldots,N-1.
\end{equation}

The Chebyshev basis has fast transforms between values at Chebyshev points and coefficients via fast implementations of the discrete cosine transforms (DCTs). The (orthogonal) Chebyshev polynomials satisfy a three-term recurrence relation that can be used in Clenshaw's algorithm~\cite{Clenshaw-9-118-55} for ${\cal O}(n)$ evaluation of interpolants. Compared with the best polynomial approximants, Chebyshev interpolants are near-best in the sense that their Lebesgue constants exhibit similar logarithmic growth.

\begin{theorem}[Battles and Trefethen~\cite{Battles-Trefethen-25-1743-04}]
Let $f$ be a continuous function on $\mathbb{I}$, $p_N$ its $N$-point polynomial interpolant in the Chebyshev points of the first kind and $p_N^\star$ its best degree-$N-1$ polynomial approximation. Then:
\begin{enumerate}
\item $\|f - p_N \|_\infty \le \left(2+\frac{2}{\pi}\log N-1\right)\|f - p_N^\star\|_\infty$;
\item if $f$ has a $k^{\rm th}$ derivative in $\mathbb{I}$ of bounded variation for some $k\ge1$, $\|f-p_N\|_\infty = {\cal O}(N^{-k})$ as $N\to\infty$; and,
\item if $f$ is analytic in a neighbourhood of $\mathbb{I}$, $\|f - p_N\|_\infty = {\cal O}(C^N)$ as $N\to\infty$ for some $C < 1$; in particular we may take $C = 1/(M +m)$ if $f$ is analytic in the closed Bernstein ellipse with foci~$\pm1$ and semimajor and semiminor axis lengths $M \ge1$ and $m\ge0$.
\end{enumerate}
\end{theorem}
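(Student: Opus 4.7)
The proof splits along the three claims, with (2) and (3) sharing a common strategy based on Chebyshev coefficient decay.

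For (1), I would exploit the Lebesgue constant $\Lambda_N$ of the interpolation operator $\mathcal{P}_N\colon f\mapsto p_N$. Since $\mathcal{P}_N$ is a projection onto polynomials of degree at most $N-1$, it fixes the best approximant $p_N^\star$, so $f-p_N=(I-\mathcal{P}_N)(f-p_N^\star)$ and consequently
\begin{equation*}
\|f-p_N\|_\infty \le (1+\Lambda_N)\|f-p_N^\star\|_\infty.
\end{equation*}
The classical estimate $\Lambda_N \le 1+\tfrac{2}{\pi}\log N$ for Chebyshev points of the first kind—obtained by passing to $x=\cos\theta$, writing the Lebesgue function explicitly, and bounding its integral via a discrete Dirichlet-type kernel—yields the stated constant.

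For (2) and (3), I would bypass the Lebesgue constant and work directly with the coefficients via aliasing. Expanding $f(x)=\sum_{n\ge 0}a_n T_n(x)$, the coefficients of the $N$-point Chebyshev interpolant satisfy $c_n = a_n + \sum_{k\ge 1}(a_{2kN+n}\pm a_{2kN-n})$, so
\begin{equation*}
\|f-p_N\|_\infty \;\le\; \sum_{n=0}^{N-1}|c_n-a_n| + \sum_{n\ge N}|a_n| \;\le\; 2\sum_{n\ge N}|a_n|.
\end{equation*}
The substitution $x=\cos\theta$ rewrites $a_n=\tfrac{2}{\pi}\int_0^\pi f(\cos\theta)\cos(n\theta)\,\mathrm{d}\theta$. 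For (2), $k$ iterated integrations by parts—with the intermediate boundary terms vanishing because $\sin(n\theta)$ and its relatives die at $\theta\in\{0,\pi\}$—followed by a final Riemann--Stieltjes step against $\mathrm{d}f^{(k)}$, give $|a_n|\le C\,V_{f^{(k)}}\,n^{-k-1}$, so $\sum_{n\ge N}|a_n|=\mathcal{O}(N^{-k})$. For (3), analyticity in the Bernstein ellipse $E_\rho$ with $\rho=M+m$ lets me substitute $x=\tfrac{1}{2}(z+z^{-1})$ and recast $a_n$ as a Laurent integral on $|z|=\rho-\varepsilon$; the standard bound $|a_n|\le 2\|f\|_{E_\rho}\,\rho^{-n}$ follows, which gives the geometric rate $\mathcal{O}((M+m)^{-N})$.

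The principal obstacle lies in (2): the hypothesis provides only bounded variation of $f^{(k)}$, not continuity of $f^{(k+1)}$, so the last integration by parts must be interpreted in the Riemann--Stieltjes sense. One must verify both that the relevant boundary contributions vanish and that the trigonometric antiderivative used in that final step is uniformly $\mathcal{O}(n^{-k-1})$ in sup norm, so that the total-variation bound on $\mathrm{d}f^{(k)}$ controls the integral independently of $n$. Once this care is taken, the remainder is a routine application of classical tools from Chebyshev approximation, and the three estimates combine to give the theorem.
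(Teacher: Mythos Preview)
The paper does not supply its own proof of this theorem: it is quoted from Battles and Trefethen with attribution and no argument, so there is nothing in the paper to compare your proposal against line by line.

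That said, your outline is the standard and correct route. Part (1) via the Lebesgue-constant bound $1+\Lambda_N$ and the known estimate $\Lambda_N\le \tfrac{2}{\pi}\log N + O(1)$ for first-kind Chebyshev nodes is exactly how the original reference (and Rivlin before it) proceeds. Parts (2) and (3) via aliasing plus coefficient decay---integration by parts in the Riemann--Stieltjes sense for bounded-variation derivatives, and contour deformation onto $|z|=\rho$ for analytic $f$---are likewise the arguments one finds in Trefethen's monograph and the references cited there. Your identification of $\rho=M+m$ from the semi-axis lengths is correct, and your flagged concern about the final Stieltjes step in (2) is the only genuinely delicate point; it is handled in the literature precisely as you describe, by checking that the trigonometric antiderivative stays $O(n^{-k-1})$ uniformly so that the total variation of $f^{(k)}$ bounds the integral. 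Nothing is missing.
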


An interpolant can be constructed to any relative or absolute tolerance $\epsilon$ by successively doubling the number of interpolation conditions, transforming values to coefficients, and determining an acceptable degree\footnote{This heuristic determination is usually based on, among other things, the relative and absolute magnitudes of initial and final coefficients, the decay rate of the coefficients, an estimate of the condition number of the function, and an estimate of the Lebesgue constant for a given degree.}.

\subsection{Two dimensions}

Numerous methods have been devised to approximate functions in more than one dimension. The straightforward generalization of the one-dimensional approach is to sample the function on a tensor of one-dimensional interpolation points and to adaptively truncate coefficients below a certain threshold.

Consider the function $f:\mathbb{I}^2\to K$, whose two-dimensional Chebyshev interpolant takes the form:
\begin{equation}\label{eq:ProductFun}
p_{m,n}(x,y) = \sum_{i=0}^{m-1}\sum_{j=0}^{n-1} A_{i,j} T_i(x) T_j(y).
\end{equation}
While the tensor approach in general suffers from the curse of dimensionality, it can still be competitive in two dimensions, scaling with ${\cal O}(mn)$ function samples and ${\cal O}(\min(mn\log n,nm\log m))$ arithmetic via fast two-dimensional transforms.

The singular value decomposition of an $m\times n$ matrix ${\bf A}$ over $K$ is the factorization~\cite{Watkins-10}:
\begin{equation}
{\bf A} = {\bf U}\boldsymbol{\Sigma}{\bf V^*},
\end{equation}
where ${\bf U}$ is an $m\times m$ unitary matrix over $K$, $\boldsymbol{\Sigma}$ is an $m\times n$ diagonal matrix of non-negative {\it singular values}, and ${\bf V^*}$ is an $n\times n$ unitary matrix over $K$. The singular value decomposition reveals the rank of a matrix as the number of nonzero singular values.

If we perform the singular value decomposition of the matrix of coefficients in~\eqref{eq:ProductFun}, the approximation to $f$ can be re-expressed as:
\begin{equation}\label{eq:SVDFun}
p_{\rm SVD}(x,y) = \sum_{i=1}^k \sigma_i u_i(x) v^*_i(y),
\end{equation}
where $\sigma_i$ are the singular values, and $u_i(x)$ and $v^*_i(y)$ are univariate Chebyshev approximants with coefficients from the columns of ${\bf U}$ and the rows of ${\bf V^*}$, respectively, and where ${\bf A}$ is of rank $k$. It follows that $p_{\rm SVD}$ is the best rank-$k$ approximant in $L^2(\mathbb{I}^2)$ to $f$ that can be obtained for the original two-dimensional interpolant. For any given tolerance $\epsilon>0$, a function $f$ has numerical rank $k_\epsilon$ if~\cite{Townsend-Thesis-14}
\begin{equation}
k_\epsilon = \inf_{k\in\mathbb{N}}\left\{ \inf_{f_k}\| f-f_k\|_\infty \le \epsilon \|f\|_\infty\right\},
\end{equation}
where the inner infimum is taken over all rank-$k$ functions.
\begin{definition}[Townsend Definition 3.1~\cite{Townsend-Thesis-14}]
For some $\epsilon>0$, let $k_\epsilon$ be the numerical rank of $f : \mathbb{I}^2\to K$, and $m_\epsilon$ and $n_\epsilon$ be the maximal degrees of the univariate approximations in the $x$ and $y$ variables. If $k_\epsilon(m_\epsilon + n_\epsilon) < m_\epsilon n_\epsilon$, we say the function $f$ is numerically of {\em low rank}, and if $k_\epsilon\approx \min(m_\epsilon,n_\epsilon)$, then the function $f$ is numerically of {\em full rank}.
\end{definition}

A particularly attractive scheme for calculating low rank approximation in two dimensions can be described as a continuous analogue of Gaussian elimination~\cite{Townsend-Thesis-14} and is a direct extension of the greedy algorithm in one dimension~\cite[Chapter 5]{Trefethen-12}. This algorithm is studied in depth in Townsend's DPhil thesis and implementations are found in {\tt Chebfun}~\cite{Chebfun} and {\tt ApproxFun.jl}~\cite{ApproxFun}. In this algorithm, the function is initially sampled on a grid to locate its approximate absolute maximum. Two one-dimensional approximations are created in the $x$ and $y$ variables to interpolate the function along the row and column that intersect at the approximate absolute maximum. After subtracting this rank-one approximation, the algorithm continues its search for the next approximate absolute maximum. After $k$ iterations, it is clear that the approximant
\begin{equation}\label{eq:LowRankFun}
p_{\rm GE}(x,y) = \sum_{i=1}^k A_i(x) B_i(y),
\end{equation}
coincides with $f$ in the $k$ rows and columns whose intersections coincide with an iteration's approximate absolute maximum. As the size of the sampling grid increases, the approximate absolute maxima will converge to the true absolute maxima and in this sense we reproduce close aproximations to $p_{\rm SVD}$. In terms of the degrees of the one-dimensional approximations $m,n$ and the rank $k$, the algorithm scales with a search over ${\cal O}(mn)$ function samples and ${\cal O}(k\,(m\log m+n\log n))$ arithmetic via fast one-dimensional transforms.

\begin{definition}[Townsend Definition 4.11~\cite{Townsend-Thesis-14}]
The function $f:\mathbb{I}^2\to K$ is Hermitian if it satisfies the conjugate symmetry $f(x,y) = f^*(y,x)$ and it is non-negative definite, i.e.:
\begin{equation}
\iint_{\mathbb{I}^2} a^*(y)f(y,x)a(x){\rm\,d}y{\rm\,d}x \ge 0,
\end{equation}
for all $a(x)\in C(\mathbb{I})$.
\end{definition}

When a bivariate function is Hermitian, even further savings can be obtained by drawing the analogy to the Cholesky factorization of a Hermitian matrix~\cite{Townsend-Trefethen-471-20140585-15}:
\begin{equation}
p_{\rm Cholesky}(x,y) = \sum_{i=1}^kA_i(x)A_i^*(y).
\end{equation}
In this case, it is known that the function's absolute maxima after every iteration are on the diagonal line $y=x$, leading to a reduction in the dimension of the search space. In addition, as they are conjugates only either the row or column slices may be computed and stored.

\subsection{An algorithm to extract the splitting of a fundamental solution}

Accurate numerical evaluation of a fundamental solution on or near the singular diagonal may not always be possible or may be more expensive~\cite{Barnett-Nelson-Mahoney-297-407-15}. To avoid the numerical problems associated with the singular diagonal, we use Chebyshev points of the first kind in one direction and Chebyshev points of the second kind~\cite{Mason-Handscomb-02} in the other direction. This ensures that the diagonal is never sampled. In terms of the DCTs, taking $2^n$ points of the first kind is optimal and taking $2^n+1$ points of the second kind is nearly optimal.

When both $A({\bf x},{\bf y})$ and $B({\bf x},{\bf y})$ in~\eqref{eq:GFisAlogplusB} are not known {\it a priori}, but the fundamental solution itself can be evaluated, we can use such skewed grids in combination with the Riemann function $\mathfrak{R}$ to:
\begin{enumerate}
\item approximate $A({\bf x},{\bf y}) \equiv -\frac{1}{2\pi}\mathfrak{R}(x_1+\I x_2,x_1-\I x_2,y_1+\I y_2,y_1-\I y_2)$; and subsequently,
\item approximate the difference $B({\bf x},{\bf y}) \equiv \Phi({\bf x},{\bf y}) -A({\bf x},{\bf y})\log|{\bf x}-{\bf y}|$.
\end{enumerate}

\section{The ultraspherical spectral method}

The ultraspherical spectral method of Olver and Townsend~\cite{Olver-Townsend-55-462-13} represents solutions of linear ordinary differential equations of the form
\begin{equation}
{\cal A} u = f, \qquad {\cal B}u = c,
\end{equation}
where ${\cal A}$ is a linear operator of the form
\begin{equation}\label{eq:LinearOperator}
{\cal A} = a_N(x) \dfrac{{\rm d}^N}{{\rm d}x^N} + \cdots + a_1(x)\frac{\rm d}{{\rm d}x} + a_0(x),
\end{equation}
and ${\cal B}$ contains $N$ linear functionals.  Typically, ${\cal B}$ encodes boundary conditions such as Dirichlet or Neumann conditions.  We consider $u(x)$ in its Chebyshev expansion
\begin{equation}
u(x) = \sum_{n=0}^\infty u_n T_n(x),
\end{equation}
so that $u(x)$ can be identified by a vector of its Chebyshev coefficients  ${\bf u} = (u_0,u_1,\ldots)^\top$.

To solve such a problem efficiently, a change of basis occurs for each order of spectral differentiation, using the formula:
\begin{equation}
\dfrac{{\rm d}^\lambda T_n(x)}{{\rm d}x^\lambda} = \left\{ \begin{array}{cc} 0, & 0\le n\le \lambda-1,\\
2^{\lambda-1}(\lambda-1)!\,n\,C_{n-\lambda}^{(\lambda)}(x), & n\ge\lambda,
\end{array}\right.
\end{equation}
where $C_n^\lambda$ represents the ultraspherical polynomial of integral order $\lambda$ and of degree $n$. This sparse differentiation has the operator representation:
\begin{equation}
{\cal D}_\lambda = 2^{\lambda-1}(\lambda-1)!\begin{pmatrix} \overbrace{0~~\cdots~~0}^{\lambda~{\rm times}} & \lambda\\
&&\lambda+1\\
&&&\lambda+2\\
&&&&\ddots\end{pmatrix},\qquad \lambda\ge1,
\end{equation}
and maps the Chebyshev coefficients to the $\lambda^{\rm th}$ order ultraspherical coefficients.

Since in~\eqref{eq:LinearOperator}, each derivative maps to a different ultraspherical basis, the sparse differentiation operators are accompanied by sparse conversion operators such that ${\cal A}$ can be expressed completely in the basis of highest order $N$:
\begin{equation}
{\cal S}_0 = \begin{pmatrix} 1 & 0 & -\frac{1}{2}\\
& \frac{1}{2} & 0 & -\frac{1}{2}\\
& & \frac{1}{2} & 0 & \ddots\\
& & & \ddots & \ddots
\end{pmatrix},\qquad
{\cal S}_\lambda = \begin{pmatrix} 1 & 0 & -\frac{\lambda}{\lambda+2}\\
& \frac{\lambda}{\lambda+1} & 0 & -\frac{\lambda}{\lambda+3}\\
& & \frac{\lambda}{\lambda+2} & 0 & \ddots\\
& & & \ddots & \ddots
\end{pmatrix},\quad \lambda\ge1.
\end{equation}
Here, ${\cal S}_0$ maps the Chebyshev coefficients to the first order ultraspherical coefficients and ${\cal S}_\lambda$ maps the $\lambda^{\rm th}$ order ultraspherical coefficients to the $(\lambda+1)^{\rm th}$ order ultraspherical coefficients. Therefore, the conversion and differentiation operators can be combined in ${\cal A}$ as follows:
\begin{equation}\label{eq:LwithConstantCoefficients}
\left( a_N{\cal D}_N + a_{N-1} {\cal S}_{N-1}{\cal D}_{N-1} + \cdots + a_0 {\cal S}_{N-1}\cdots{\cal S}_0\right) {\bf u} = {\cal S}_{N-1}\cdots{\cal S}_0{\bf f},
\end{equation}
where ${\bf u}$ and ${\bf f}$ are vectors of Chebyshev expansion coefficients. Were the coefficients $a_i(x)$, $i=0,\ldots,N$, constant, then~\eqref{eq:LwithConstantCoefficients} would represent a linear recurrence relation in the coefficients ${\bf u}$ of length at most $2N+1$. However, the coefficients are in general not constants, so the multiplication operators in Chebyshev and ultraspherical bases are also investigated in~\cite{Olver-Townsend-55-462-13}. Let
\begin{equation}
a(x) = \sum_{n=0}^\infty a_n T_n(x).
\end{equation}
Then it is shown in~\cite{Olver-Townsend-55-462-13} that multiplication can be represented as a Toeplitz-plus-Hankel-plus-rank-one operator:
\begin{equation}
{\cal M}_0[a] = \dfrac{1}{2}\left[\begin{pmatrix} 2a_0 & a_1 & a_2 & \cdots\\ a_1 & 2a_0 & a_1 & \ddots\\ a_2 & a_1 & 2a_0 & \ddots\\ \vdots & \ddots & \ddots & \ddots\\\end{pmatrix} + \begin{pmatrix} 0 & 0 & 0 & \cdots\\ a_1 & a_2 & a_3 & \cdots\\ a_2 & a_3 & a_4 & \iddots\\ \vdots & \iddots & \iddots & \iddots\\\end{pmatrix} \right].
\end{equation}
For $\lambda>0$, an explicit formula for the entries is given in~\cite{Olver-Townsend-55-462-13} and a three-term recurrence relation is shown in~\cite[Chap. 6]{Townsend-Thesis-14}. By the associative and distributive properties of multiplication, the recurrence relation for the multiplication operators is derived from the recurrence relation for the ultraspherical polynomials:
\begin{equation}\label{eq:ultraMultrec}
{\cal M}_\lambda[C_{n+1}^{(\lambda)}] = \dfrac{2(n+\lambda)}{n+1}{\cal M}_\lambda[x]{\cal M}_\lambda[C_n^{(\lambda)}] - \dfrac{n+2\lambda-1}{n+1}{\cal M}_\lambda[C_{n-1}^{(\lambda)}],\qquad n\ge1.
\end{equation}
Since we assume the coefficients $a_i(x)$ to be continuous functions with bounded variation on $\mathbb{I}$, let $m$ denote the highest degree Chebyshev expansion such that for some $\epsilon>0$:
\begin{equation}
\left\|a_i(x) - \sum_{n=0}^{m-1}a_{in}T_n(x)\right\|_\infty \le \epsilon\|a_i(x)\|_\infty,\quad{\rm for}\quad i=0,\ldots,N.
\end{equation}
Then in this way, the system
\begin{align}
{\cal B}{\bf u} & = {\bf c},\nonumber\\
\left( {\cal M}_N[a_N]{\cal D}_N + {\cal M}_N[a_{N-1}] {\cal S}_{N-1}{\cal D}_{N-1} + \cdots + {\cal M}_N[a_0] {\cal S}_{N-1}\cdots{\cal S}_0\right) {\bf u} & = {\cal S}_{N-1}\cdots{\cal S}_0{\bf f},
\end{align}
is almost banded with bandwidth ${\cal O}(m)$. The proposed ${\cal O}(m^2n)$ solution process for such systems is the adaptive QR factorization, generalizing (F.~W.~J.)~Olver's algorithm for second-order difference equations~\cite{Olver-71B-111-67}. In this factorization, the forward error is estimated at every step in the infinite-dimensional upper-triangularization to adaptively determine the minimal order $n$ required to resolve the solution below a pre-determined accuracy. Since the unitary transformations implied by $Q$ preserve the rank structure, the back substitution is also performed with ${\cal O}(m^2n)$ complexity.

Figure~\ref{fig:Example0spyplot} shows the typical structure of the system and an example of the type of singularly perturbed boundary value problem that it can solve efficiently.

\begin{figure}[htbp]
\begin{center}
\begin{tabular}{cc}
\includegraphics[width=0.405\textwidth]{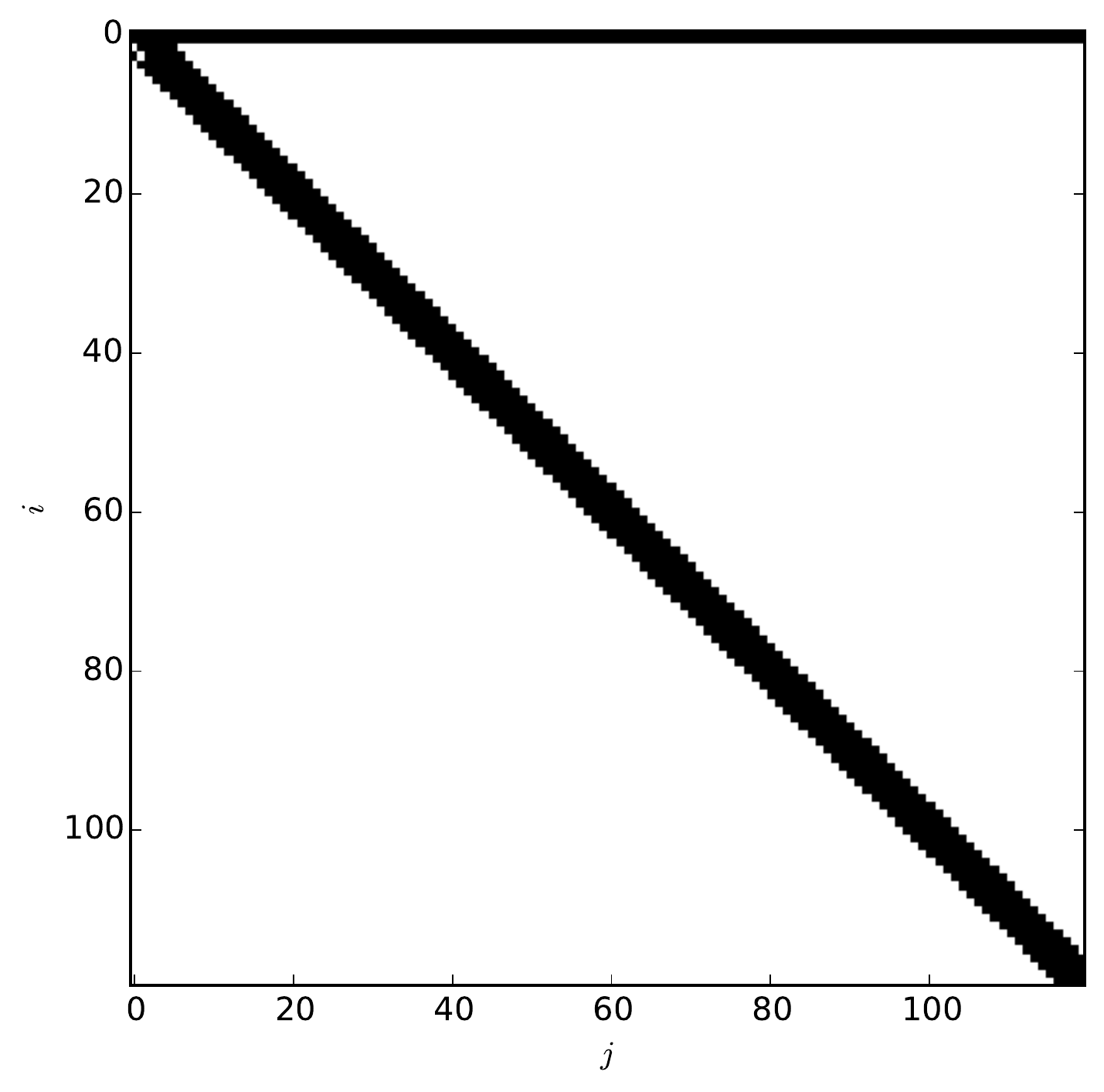}&
\includegraphics[width=0.57\textwidth]{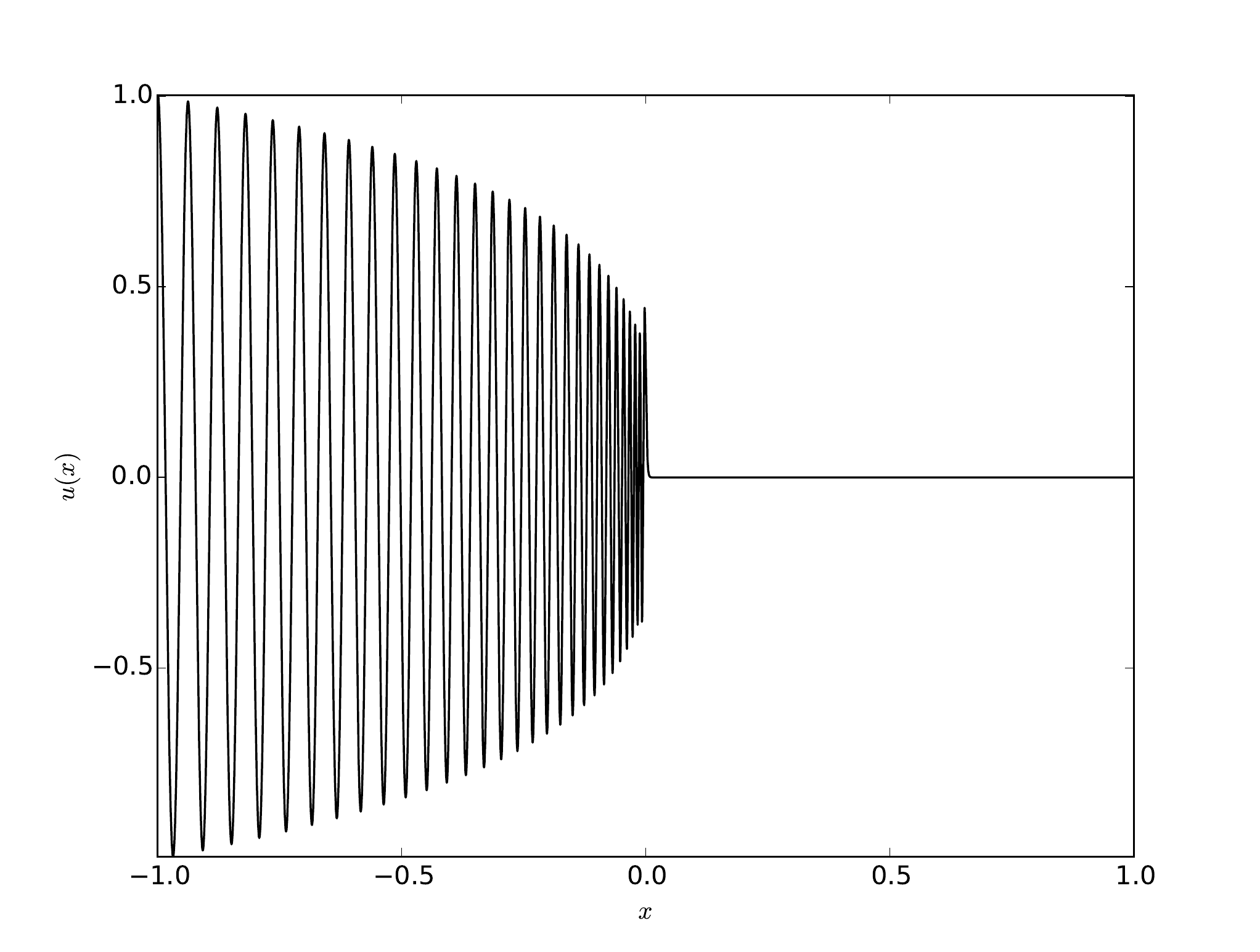}\\
\end{tabular}
\caption{Solution of $\epsilon(\epsilon+x^2)u''(x) = x\,u(x)$, $u(-1)=1$, $u(1)=0$ via the ultraspherical spectral method. Left: the structure of the system. Right: a plot of the solution for $\epsilon = 10^{-4}$. In this case, a Chebyshev expansion of degree $3,\!276$ is required to approximate the solution to double precision.}
\label{fig:Example0spyplot}
\end{center}
\end{figure}

\subsection{Almost-banded spectral methods in other bases}

The key elements of the ultraspherical spectral method are a graded set of bases that permit banded differentiation and conversion within the set of bases, and multiplication operators for variable coefficients. Other examples where a graded basis can be exploited are the Jacobi polynomials (which include Legendre and ultraspherical polynomials as special cases), and the generalized Laguerre polynomials. Hermite polynomials, which form an Appell sequence, satisfy $H'_n(x) = 2nH_{n-1}(x)$, and therefore do not require other bases for conversion.

From the three-term recurrence relation satisfied by orthogonal polynomials~\cite{Gautschi-04}:
\begin{equation}
x \pi_{n}(x) = \alpha_n \pi_{n+1}(x) + \beta_n \pi_{n}(x) + \gamma_n \pi_{n-1}(x),
\end{equation}
it follows that multiplication by $x$ is tridiagonal:
\begin{equation}
{\cal M}[x] = \begin{pmatrix} \beta_0 & \alpha_0  \\ \gamma_1 & \beta_1 & \alpha_1 &  &  \\  & \gamma_2 & \beta_2 & \alpha_2 &  \\  &  & \ddots & \ddots & \ddots \end{pmatrix}.
\end{equation}
Therefore, banded multiplication operators in orthogonal bases can be derived from the recurrence relation:
\begin{equation}
{\cal M}[\pi_{n+1}] = \left(\dfrac{{\cal M}[x] -\beta_n}{\alpha_n}\right){\cal M}[\pi_n] - \dfrac{\gamma_n}{\alpha_n}{\cal M}[\pi_{n-1}],\qquad n\ge1.
\end{equation}
and consequently variable coefficients represented as interpolants have a finite-bandwidth operator form. To numerically determine such variable coefficients {\it practically} requires fast transforms. Among the many possibilities, see~\cite{Hale-Townsend-15} for a new approach for a fast FFT-based discrete Legendre transform.

\section{Ultraspherical spectral method for singular integral equations}

In the following definitions, we identify $\mathbb{C}$ with $\mathbb{R}^2$ and let $\Gamma$ be bounded in $\mathbb{C}$.

\begin{definition}[Kress~\cite{Kress-10}]
A real- or complex-, scalar- or vector-valued function $f$ defined on $\Gamma$ is called {\em uniformly H\"older continuous} with {\em H\"older exponent} $0<\alpha\le1$ if there exists a constant $C$ such that
\begin{equation}
|f({\bf x})-f({\bf y})| \le C |{\bf x}-{\bf y}|^\alpha, \quad{\rm for}\quad {\bf x},{\bf y} \in \Gamma.
\end{equation}
\end{definition}
By $C^{0,\alpha}(\Gamma)$ we denote the space of all bounded and uniformly H\"older continuous functions with exponent $\alpha$. For vectors, we take $|\cdot|$ to be the Euclidean distance. With the norm
\begin{equation}
\|f\|_{0,\alpha} := \sup_{{\bf x}\in\Gamma} |f({\bf x})| + \sup_{\substack{{\bf x},{\bf y}\in\Gamma\\{\bf x}\ne{\bf y}}} \dfrac{|f({\bf x})-f({\bf y})|}{|{\bf x}-{\bf y}|^\alpha},
\end{equation}
the H\"older space is a Banach space, and we can further introduce $C^{1,\alpha}(\Gamma)$ as the space of all differentiable functions whose gradient belongs to $C^{0,\alpha}(\Gamma)$.

\begin{definition}\label{definition:Cauchy}
Let $f\in C^{0,\alpha}(\Gamma)$. The Cauchy transform over $\Gamma$ is defined as:
\begin{equation}
{\cal C}_\Gamma f(z) := \dfrac{1}{2\pi\I} \int_\Gamma \dfrac{f(\zeta)}{\zeta - z} \dkf \zeta,\quad{\rm for}\quad z\in\mathbb{C}\setminus\Gamma.
\end{equation}
\end{definition}

The Cauchy transform can be extended to $z\in\Gamma$ with integration understood as the Cauchy principal value.
\begin{definition}\label{definition:Hilbert}
Let $f\in C^{0,\alpha}(\Gamma)$. The Hilbert transform over $\Gamma$ is defined as:
\begin{equation}
{\cal H}_\Gamma f(z) := \dfrac{1}{\pi} \dashint_\Gamma \dfrac{f(\zeta)}{\zeta - z} \dkf \zeta,\quad{\rm for}\quad z\in\Gamma,
\end{equation}
where the integral is understood as the Cauchy principal value:
\begin{equation}
\dfrac{1}{\pi} \dashint_\Gamma \dfrac{f(\zeta)}{\zeta - z} \dkf \zeta = \dfrac{1}{\pi}\lim_{\rho\to0} \int_{\Gamma\setminus\Gamma(z;\rho)} \dfrac{f(\zeta)}{\zeta - z} \dkf \zeta,
\end{equation}
where $\Gamma(z;\rho) := \left\{\zeta\in\Gamma : |\zeta-z|\le\rho\right\}$.
\end{definition}

\begin{lemma}[Sokhotski--Plemelj~\cite{Sokhotski-Thesis-73,Plemelj-19-205-08}]
If $f\in C^{0,\alpha}(\Gamma)$, then:
\begin{equation}
{\cal H}_\Gamma f(z)=  \I [{\cal C}^+ + {\cal C}^-]f(z),
\end{equation}
where ${\cal C}^\pm$ denotes the limit from the left/right of $\Gamma$.
\end{lemma}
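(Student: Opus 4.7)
The plan is to derive the lemma from the more standard one-sided Sokhotski--Plemelj jump formulas
\[
\mathcal{C}^\pm f(z) = \pm\tfrac12 f(z) + \dfrac{1}{2\pi\I}\dashint_\Gamma \dfrac{f(\zeta)}{\zeta-z}\,\dkf\zeta,\qquad z\in\Gamma,
\]
and simply add them: the $\pm\tfrac12 f(z)$ jumps cancel, and multiplying by $\I$ converts the prefactor $\frac{1}{2\pi\I}\cdot 2$ into $\frac{1}{\pi}$, which is exactly $\mathcal{H}_\Gamma f(z)$ as defined in Definition~\ref{definition:Hilbert}. So the real content is proving the one-sided formulas.

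First, I would fix $z$ in the relative interior of $\Gamma$ (the endpoints of the open arcs are excluded since the principal value there needs separate treatment, and the lemma statement already presumes the Cauchy principal value exists). For $w\notin\Gamma$ near $z$, I would split the Cauchy integrand by adding and subtracting $f(z)$:
\[
\mathcal{C}_\Gamma f(w) = \dfrac{1}{2\pi\I}\int_\Gamma \dfrac{f(\zeta)-f(z)}{\zeta-w}\,\dkf\zeta + \dfrac{f(z)}{2\pi\I}\int_\Gamma \dfrac{\dkf\zeta}{\zeta-w}.
\]
The first integrand is bounded by $C|\zeta-z|^{\alpha}/|\zeta-w|$, which is uniformly integrable over $\Gamma$ by the H\"older hypothesis (the $|\zeta-w|^{-1}$ singularity is tamed to $|\zeta-z|^{\alpha-1}$ locally, which is integrable on an arc in $\mathbb{C}$), so dominated convergence lets $w\to z$ from either side through the same limit, which equals the principal value. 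The second integral is an explicit log, and here the two sides differ: as $w\to z$ from the left vs.\ the right of $\Gamma$, the imaginary part of $\log\frac{\zeta-w}{\cdot}$ picks up an extra $\pm\pi\I$ from the contour crossing.

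The key technical step is this last point, and I expect it to be the main obstacle to make clean: one must localize to a small sub-arc $\Gamma(z;\rho)$, use a smooth local parameterization (say, the tangent line at $z$ plus a $C^{1,\alpha}$ correction, which is available since $\Gamma$ is a smooth arc), and show that the remaining $\int_{\Gamma\setminus\Gamma(z;\rho)}(\zeta-w)^{-1}\dkf\zeta$ is continuous across $\Gamma$, while the local piece contributes $\mp\pi\I + \dashint_{\Gamma(z;\rho)}(\zeta-z)^{-1}\dkf\zeta + o(1)$ as $w\to z^\pm$ and $\rho\to 0$. This is a standard computation in the theory of Cauchy integrals on smooth arcs.

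Having established the jump formulas, the conclusion is immediate:
\[
\I\bigl[\mathcal{C}^+ + \mathcal{C}^-\bigr]f(z) = \I\cdot \dfrac{2}{2\pi\I}\dashint_\Gamma \dfrac{f(\zeta)}{\zeta-z}\,\dkf\zeta = \dfrac{1}{\pi}\dashint_\Gamma \dfrac{f(\zeta)}{\zeta-z}\,\dkf\zeta = \mathcal{H}_\Gamma f(z),
\]
as desired. I would additionally remark that the corresponding difference formula $\mathcal{C}^+ - \mathcal{C}^- = f$ is the companion identity that the paper will likely also need in subsequent sections.
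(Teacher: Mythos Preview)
Your argument is correct and is essentially the classical proof: subtract $f(z)$ to isolate a H\"older-controlled integral that is continuous across $\Gamma$, evaluate the remaining $\int_\Gamma (\zeta-w)^{-1}\dkf\zeta$ explicitly to pick up the $\pm\pi\I$ from the branch, obtain the one-sided Plemelj formulas, and add them.

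However, there is nothing to compare against: the paper does not prove this lemma. It is stated with attribution to the original sources \cite{Sokhotski-Thesis-73,Plemelj-19-205-08} and then used as a black box (for instance in the verifications of the Cauchy-transform formulas for weighted Chebyshev polynomials). So your proposal supplies strictly more than the paper does. If you want to align with the paper's treatment, simply cite the result; if you want to include a proof, what you have sketched is the standard one and is fine, though the localization step (straightening the arc near $z$ and controlling the $o(1)$ terms) deserves a line or two more care than ``standard computation'' if you intend it to be self-contained.
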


With the Hilbert and Cauchy transforms, further integrals with singularities can be defined.
\begin{definition}\label{definition:Log&Hadamard}
For $f\in C^{0}(\Gamma)$ the log transform over $\Gamma$ is defined as:
\begin{equation}
{\cal L}_\Gamma f(z) := \dfrac{1}{\pi} \int_\Gamma \log|\zeta-z|f(\zeta)\dkf \zeta,\quad{\rm for}\quad z\in\mathbb{C}.
\end{equation}
For $f\in C^{1,\alpha}(\Gamma)$ the derivative of the Hilbert transform is defined as:
\begin{equation}
{\cal H}'_\Gamma f(z) := \dfrac{1}{\pi}\ddashint_\Gamma \dfrac{f(\zeta)}{(\zeta - z)^2} \dkf \zeta,\quad{\rm for}\quad z\in\Gamma,
\end{equation}
where the integral is understood as the Hadamard finite-part~\cite{Martin-4-197-92,Monegato-50-9-94}:
\begin{equation}
\dfrac{1}{\pi}\ddashint_\Gamma \dfrac{f(\zeta)}{(\zeta - z)^2} \dkf \zeta = \dfrac{1}{\pi}\lim_{\rho\to0} \left\{\int_{\Gamma\setminus\Gamma(z;\rho)} \dfrac{f(\zeta)}{(\zeta - z)^2} \dkf \zeta - \dfrac{2f(z)}{\rho}\right\},
\end{equation}
where $\Gamma(z;\rho)$ is defined as in Definition~\ref{definition:Hilbert}.
\end{definition}

\begin{remarks}~
\begin{enumerate}
\item The Sokhotski--Plemelj lemma offers a convenient way to compute the Hilbert transform via the limit of two Cauchy transforms.
\item The use of the Cauchy principal value and the Hadamard finite-part allows for the regularization of singular and hypersingular integral operators, respectively.
\end{enumerate}
\end{remarks}

On a contour $\Gamma$, we expand the kernel of the singular integral equation~\eqref{eq:SIE} in the following way:
\begin{equation}\label{eq:SIEonI}
{\cal A} u = f, \qquad {\cal B}u = c,
\end{equation}
for 
$${\cal A} u = \dfrac{1}{\pi}\ddashint_{-1}^1 \left(\dfrac{K_1(x,y)}{(y-x)^2} + \dfrac{K_2(x,y)}{y-x} + \log|y-x| K_3(x,y) + K_4(x,y)\right) u(y){\rm\,d}y,$$
where $K_1$, $K_2$, $K_3$ and $K_4$ are known continuous bivariate kernels, $f$ is continuous, ${\cal B}$ contains $N$ linear functionals, and $u$ is the unknown solution. If in~\eqref{eq:SIEonI}, we replace the bivariate kernels with low rank approximations,
\begin{equation}\label{eq:KlambdaLowRank}
K_\lambda(x,y) \approx \sum_{i=1}^{k_{\lambda}} A_{\lambda,i}(x)B_{\lambda,i}(y),\quad{\rm for}\quad\lambda=1,2,3,4,
\end{equation}
we achieve at once two remarkable things: firstly, the approximations are compressed representations of the kernels; and secondly, the separation of variables in the low rank approximation allows for the singular integral operators to be constructed via the Definitions~\ref{definition:Hilbert} and~\ref{definition:Log&Hadamard}.

In the following two subsections, we consider the case where $\Gamma$  is the unit interval, and emulate the construction of the ultraspherical spectral method for ODEs to arrive at an almost-banded system to represent~\eqref{eq:SIEonI}.  In this setting, we must use weighted Chebyshev bases to accomplish this task. Note that alternative spectral methods for open arcs are discussed in~\cite{Lintner-Bruno-12}.

\subsection{Inverse square root endpoint singularities}

Indeed, the Hilbert transform of weighted Chebyshev polynomials is known~\cite{King-1-09}:
\begin{equation}
{\cal H}_{(-1,1)}\left[\dfrac{T_n(x)}{\sqrt{1-x^2}}\right] = \left\{\begin{array}{cc}0, & n=0,\\C^{(1)}_{n-1}(x), & n\ge1,\end{array}\right.
\end{equation}
This operation can then be expressed as the banded operator from the weighted Chebyshev coefficients to the ultraspherical coefficients of order $1$:
\begin{equation}
{\cal H}_{(-1,1)} = \begin{pmatrix} 0 & 1\\
&&1\\
&&&1\\
&&&&\ddots\end{pmatrix}.
\end{equation}
Upon integration with respect to $x$, we obtain an expression for the log transform:
\begin{equation}
{\cal L}_{(-1,1)}\left[\dfrac{T_n(x)}{\sqrt{1-x^2}}\right] = \left\{\begin{array}{cc}-\log2, & n=0,\\-\dfrac{T_{n}(x)}{n}, & n\ge1,\end{array}\right.
\end{equation}
or as an operator from the weighted Chebyshev coefficients to the Chebyshev coefficients:
\begin{equation}
{\cal L}_{(-1,1)} =\begin{pmatrix}
-\log 2\\
& -1\\
& & -\frac{1}{2}\\
& & & \ddots\\
\end{pmatrix}.
\end{equation}
In addition, upon differentiation with respect to $x$, we also obtain an expression for the derivative of the Hilbert transform:
\begin{equation}
{\cal H}'_{(-1,1)}\left[\dfrac{T_n(x)}{\sqrt{1-x^2}}\right] = \left\{\begin{array}{cc}0, & n=0,1,\\C^{(2)}_{n-2}(x), & n\ge2,\end{array}\right.
\end{equation}
This operation can then be expressed as the banded operator from the weighted Chebyshev coefficients to the ultraspherical coefficients of order $2$:
\begin{equation}
{\cal H}'_{(-1,1)} = \begin{pmatrix} 0 & 0 & 1\\
&&&1\\
&&&&1\\
&&&&&\ddots\end{pmatrix}.
\end{equation}

Lastly, the orthogonality of the Chebyshev polynomials immediately yields for the functional
\begin{equation}
\Sigma_\Gamma f := \dfrac{1}{\pi}\int_\Gamma f(\zeta){\rm\,d}\zeta
\end{equation}
the following:
\begin{equation}
\Sigma_{(-1,1)}\left[\dfrac{T_n(x)}{\sqrt{1-x^2}}\right] = \left\{\begin{array}{cc}1, & n=0,\\0, & n\ge1,\end{array}\right.
\end{equation}
or as a compact functional on the weighted Chebyshev coefficients:
\begin{equation}
\Sigma_{(-1,1)} =\begin{pmatrix}
1 & 0 & 0 &\cdots\\
\end{pmatrix}.
\end{equation}

Combining the integral operators together with the bivariate approximations, we define:
\begin{align}
{\cal H}'_{(-1,1)}[K_1] & := \sum_{i=1}^{k_1} {\cal M}_2[A_{1,i}(x)]{\cal H}'_{(-1,1)} {\cal M}_0[B_{1,i}(y)],\\
{\cal H}_{(-1,1)}[K_2] & := \sum_{i=1}^{k_2} {\cal M}_1[A_{2,i}(x)]{\cal H}_{(-1,1)} {\cal M}_0[B_{2,i}(y)],\\
{\cal L}_{(-1,1)}[K_3] & := \sum_{i=1}^{k_3} {\cal M}_0[A_{3,i}(x)]{\cal L}_{(-1,1)} {\cal M}_0[B_{3,i}(y)],\\
\Sigma_{(-1,1)}[K_4] & := \sum_{i=1}^{k_4} {\cal M}_0[A_{4,i}(x)]\Sigma_{(-1,1)} {\cal M}_0[B_{4,i}(y)].
\end{align}
Then, we can reduce singular integral equations of the form~\eqref{eq:SIEonI} into an infinite-dimensional almost-banded system:
\begin{align}
{\cal B}{\bf u} & = {\bf c},\nonumber\\
\left({\cal H}'_{(-1,1)}[K_1] + {\cal S}_1{\cal H}_{(-1,1)}[K_2] + {\cal S}_1{\cal S}_{0}({\cal L}_{(-1,1)}[K_3] + \Sigma_{(-1,1)}[K_4])\right) {\bf u} & = {\cal S}_1{\cal S}_0{\bf f}.
\end{align}
This system can be solved directly using the framework of infinite-dimensional linear algebra~\cite{Olver-Townsend-57-14}, built out of the adaptive QR factorization introduced in \cite{Olver-Townsend-55-462-13}.  

\subsection{Square root endpoint singularities}

The Hilbert transform of weighted Chebyshev polynomials of the second kind is also known~\cite{King-1-09}:
\begin{equation}
{\cal H}_{\mathbb{I}}\left[U_{n}(x)\sqrt{1-x^2}\right] = -T_{n+1}(x),\quad n\ge0.
\end{equation}
This operation can then be expressed as the banded operator from the weighted ultraspherical coefficients of order $1$ to the Chebyshev coefficients:
\begin{equation}
{\cal H}_{\mathbb{I}} = \begin{pmatrix} 0\\
-1\\
&-1\\
&&\ddots\end{pmatrix}.
\end{equation}
Upon integration with respect to $x$, we obtain an expression for the log transform:
\begin{equation}
{\cal L}_{\mathbb{I}}\left[U_n(x)\sqrt{1-x^2}\right] = \left\{\begin{array}{cc}-\dfrac{1}{2}\log2+\dfrac{1}{4}T_2(x), & n=0,\\&\\ \dfrac{1}{2}\left(\dfrac{T_{n+2}(x)}{n+2}-\dfrac{T_{n}(x)}{n}\right), & n\ge1,\end{array}\right.
\end{equation}
or as an operator from the weighted ultraspherical coefficients of order $1$ to the Chebyshev coefficients:
\begin{equation}
{\cal L}_{\mathbb{I}} =\begin{pmatrix}
-\frac{1}{2}\log 2\\
0& -\frac{1}{2}\\
\frac{1}{4}& 0 & -\frac{1}{4}\\
& \ddots& \ddots& \ddots\\
\end{pmatrix}.
\end{equation}
In addition, upon differentiation with respect to $x$, we also obtain an expression for the derivative of the Hilbert transform:
\begin{equation}
{\cal H}'_{\mathbb{I}}\left[U_n(x)\sqrt{1-x^2}\right] = -(n+1)C^{(1)}_n(x),\quad n\ge0,
\end{equation}
This operation can then be expressed as the banded operator from the weighted ultraspherical coefficients of order $1$ to the ultraspherical coefficients of order $1$:
\begin{equation}
{\cal H}'_{\mathbb{I}} = \begin{pmatrix} -1\\
&-2\\
&&-3\\
&&&\ddots\end{pmatrix}.
\end{equation}

Lastly, the orthogonality of the Chebyshev polynomials of the second kind immediately yields for $\Sigma_{\mathbb{I}}$:
\begin{equation}
\Sigma_{\mathbb{I}}\left[U_n(x)\sqrt{1-x^2}\right] = \left\{\begin{array}{cc}\frac{1}{2}, & n=0,\\0, & n\ge1,\end{array}\right.
\end{equation}
or as a compact functional on the weighted Chebyshev basis:
\begin{equation}
\Sigma_{\mathbb{I}} =\begin{pmatrix}
\frac{1}{2} & 0 & 0 &\cdots\\
\end{pmatrix}.
\end{equation}

Combining the integral operators together with the bivariate approximations, we define:
\begin{align}
{\cal H}'_{\mathbb{I}}[K_1] & := \sum_{i=1}^{k_1} {\cal M}_1[A_{1,i}(x)]{\cal H}'_{\mathbb{I}} {\cal M}_1[B_{1,i}(y)],\\
{\cal H}_{\mathbb{I}}[K_2] & := \sum_{i=1}^{k_2} {\cal M}_0[A_{2,i}(x)]{\cal H}_{\mathbb{I}} {\cal M}_1[B_{2,i}(y)],\\
{\cal L}_{\mathbb{I}}[K_3] & := \sum_{i=1}^{k_3} {\cal M}_0[A_{3,i}(x)]{\cal L}_{\mathbb{I}} {\cal M}_1[B_{3,i}(y)],\\
\Sigma_{\mathbb{I}}[K_4] & := \sum_{i=1}^{k_4} {\cal M}_0[A_{4,i}(x)]\Sigma_{\mathbb{I}} {\cal M}_1[B_{4,i}(y)],
\end{align}
and in the framework of infinite-dimensional linear algebra~\cite{Olver-Townsend-57-14}, we may solve singular integral equations of the form~\eqref{eq:SIEonI} via the almost-banded system:
\begin{align}
{\cal B}{\bf u} & = {\bf c},\nonumber\\
\left({\cal H}'_{\mathbb{I}}[K_1] + {\cal S}_{0}({\cal H}_{\mathbb{I}}[K_2] +  {\cal L}_{\mathbb{I}}[K_3] + \Sigma_{\mathbb{I}}[K_4])\right) {\bf u} & = {\cal S}_0{\bf f}.
\end{align}

Let $m_x+m_y$ denote the largest sum of degrees of the bivariate Chebyshev expansions of the integral kernels such that for some $\epsilon>0$:
\begin{equation}
\left\|K_\lambda(x,y) - \sum_{i=1}^{k_{\lambda}} A_{\lambda,i}(x)B_{\lambda,i}(y)\right\|_\infty \le \epsilon\|K_\lambda(x,y)\|_\infty,\quad{\rm for}\quad\lambda=1,2,3,4.
\end{equation}
Then, the complexity of the adaptive QR factorization is ${\cal O}((m_x+m_y)^2n)$ operations, where $n$ is degree of the resulting weighted Chebyshev expansion of the solution.  This is reduced to ${\cal O}((m_x+m_y)n)$ operations  by pre-caching the QR factorization.

\begin{remarks}~
\begin{enumerate}
\item The observation that $|{\rm d}\zeta| = {\rm d}\zeta$ on $\mathbb{I}$ allows us to relate line integral formulations with the operators of Definitions~\ref{definition:Hilbert} and \ref{definition:Log&Hadamard}\footnote{Both variants of the singular integral operators are implemented in {\tt SingularIntegralEquations.jl}.}.
\item Mixed equations involving derivatives and singular integral operators are also covered in this framework.
\item It is straightforward to obtain the singular integral operators on arbitrary (complex) intervals $(a,b)$ using an affine map.
\end{enumerate}
\end{remarks}

\subsection{Multiple disjoint contours}

Singular integral equations on a union of disjoint intervals $\Gamma=\Gamma_1\cup\Gamma_2\cup\cdots\cup\Gamma_d$ are covered in this framework.   We can decompose~\eqref{eq:SIEonI} as
\begin{equation}\label{eq:Au=fsystem}
\begin{pmatrix}
{\cal B}_1 & {\cal B}_2 & \cdots & {\cal B}_d\\
{\cal A}_{1,1} & {\cal A}_{1,2} & \cdots & {\cal A}_{1,d}\\
{\cal A}_{2,1} & {\cal A}_{2,2} & \cdots & {\cal A}_{2,d}\\
\vdots & \vdots & \ddots & \vdots\\
{\cal A}_{d,1} & {\cal A}_{d,2} & \cdots & {\cal A}_{d,d}\\
\end{pmatrix}
\begin{pmatrix}
{\bf u}_1\\{\bf u}_2\\\vdots\\{\bf u}_d
\end{pmatrix}
 =
\begin{pmatrix}
{\bf c}\\{\bf f}_1\\{\bf f}_2\\\vdots\\{\bf f}_d
\end{pmatrix},
\end{equation}
where  each ${\cal B}_i$ is a set of linear functionals and ${\cal A}_{i,j} = {\cal A}_{\Gamma_i}|_{\Gamma_j}$.    The diagonal blocks are equivalent to the previous case considered, hence result in banded representations.  The off-diagonal blocks can be constructed directly by expanding the entire non-singular kernel in low rank form and using the compact functionals $\Sigma_{(-1,1)}$ or $\Sigma_{\mathbb{I}}$.  The resulting representation is, in fact, finite-dimensional and hence every block is banded.  

Here, we show how a block-almost-banded infinite-dimensional system can be interlaced to be re-written as a single infinite-dimensional and almost-banded system.   Re-ordering both vectors $({\bf u}_1,{\bf u}_2,\ldots,{\bf u}_d)^\top$ and $({\bf f}_1,{\bf f}_2,\ldots,{\bf f}_d)^\top$ to:
\begin{align}
{\bf U}& =
\begin{pmatrix}
u_{1,0}&u_{2,0}&\cdots&u_{d,0}&u_{1,1}&u_{2,1}&\cdots&u_{d,1}&\cdots
\end{pmatrix}^\top\\
{\bf F}& =
\begin{pmatrix}
f_{1,0}&f_{2,0}&\cdots&f_{d,0}&f_{1,1}&f_{2,1}&\cdots&f_{d,1}&\cdots
\end{pmatrix}^\top
\end{align}
amounts to a permutation of almost every row and column in~\eqref{eq:Au=fsystem}. Define each entry of $\mathfrak{B}$ and $\mathfrak{A}$ by:
\begin{align}
\mathfrak{B}_{i,j} & = {\cal B}_{\{(i-1){\rm\,mod\,}d\}+1,\lfloor\frac{i+d-1}{d}\rfloor,j},\\
\mathfrak{A}_{i,j} & = {\cal A}_{\{(i-1){\rm\,mod\,}d\}+1,\{(j-1){\rm\,mod\,}d\}+1,\lfloor\frac{i+d-1}{d}\rfloor,\lfloor\frac{j+d-1}{d}\rfloor},
\end{align}
where the last two indices in each term on the right-hand sides denote the entries of the functional or operator.
This perfect shuffle allows for the system~\eqref{eq:Au=fsystem} to be re-written as the almost-banded system
\begin{equation}
\begin{pmatrix}
\mathfrak{B}\\\mathfrak{A}
\end{pmatrix}{\bf U} = \begin{pmatrix}{\bf c}\\{\bf F}\end{pmatrix}.
\end{equation}

\subsection{Diagonal preconditioners for compactness}

We now show that our formulations leads to equations whose operators are compact perturbations of the identity. For well-posed (integral) equations, this ensures convergence~\cite{Olver-Townsend-55-462-13}. We show this for the singular operators in equations~\eqref{eq:DirichletIE} and~\eqref{eq:NeumannIE} defined on the canonical unit interval and in suitably chosen spaces. Note that a similar analysis is performed in~\cite{Lintner-Bruno-12}. Since we are working in coefficient space, we consider the problem as defined in $\ell_\lambda^2$ spaces. In the case of Chebyshev expansions, this corresponds to Sobolev spaces of the transformed function $u(\cos\theta)$.
\begin{definition}[Olver and Townsend~\cite{Olver-Townsend-55-462-13}]\label{definition:discreteSobolev}
The space $\ell_\lambda^2\subset\mathbb{C}^\infty$ is defined as the Banach space with norm:
\begin{equation}
\|{\bf u}\|_{\ell_\lambda^2} = \sqrt{\sum_{k=0}^\infty|u_k|^2(k+1)^{2\lambda}} <\infty.
\end{equation}
\end{definition}

Let ${\cal P}_n = (I_n,{\bf 0})$ be the projection operator.

\begin{lemma}\label{lemma:DirichletI+K}
For the Dirichlet problem singular integral operator in~\eqref{eq:DirichletIE}, if $\Phi$ takes the form~\eqref{eq:GFisAlogplusB} with $A$ and $B$ analytic in both $x$ and $y$ and if we take ${\cal R}$ to be
\begin{equation}
{\cal R} = 2\begin{pmatrix}
\frac{1}{\log 2}\\
& 1\\
& & 2\\
& & & 3\\
& & & & \ddots
\end{pmatrix} : \ell_{\lambda}^2\to\ell_{\lambda-1}^2,
\end{equation}
then
\begin{equation}
\begin{pmatrix}{\cal L}_{(-1,1)}[\pi A] + \Sigma_{(-1,1)}[\pi B]\end{pmatrix}{\cal R} = I + {\cal K},
\end{equation}
where ${\cal K} : \ell_\lambda^2\to\ell_\lambda^2$ is compact for $\lambda\in\mathbb{R}$.
\end{lemma}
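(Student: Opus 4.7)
My strategy is to split the left-hand side into a leading-order identity piece plus two compact remainders, exploiting: (a) the explicit scalar identity $\mathcal{L}_{(-1,1)}\mathcal{R}=-2I$, obtained by direct entrywise multiplication of the two diagonal operators (entries $-\log 2,-1,-\tfrac12,-\tfrac13,\ldots$ times $\tfrac{2}{\log 2},2,4,6,\ldots$ yield $-2$ at every position); (b) the on-diagonal value $\pi A(x,x)=-\tfrac12$; and (c) a commutator estimate that trades the order-$+1$ growth of $\mathcal{R}$ against the order-$-1$ smoothing of $\mathcal{L}_{(-1,1)}$.

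Using the low-rank expansion $\pi A(x,y) = \sum_i \alpha_i(x)\beta_i(y)$, which converges geometrically by analyticity of $A$, and writing $\mathcal{M}_0[\beta_i]\mathcal{R} = \mathcal{R}\mathcal{M}_0[\beta_i] + [\mathcal{M}_0[\beta_i],\mathcal{R}]$ inside each summand, we obtain
\begin{align*}
\mathcal{L}_{(-1,1)}[\pi A]\,\mathcal{R}
 &= \sum_i \mathcal{M}_0[\alpha_i]\,\mathcal{L}_{(-1,1)}\mathcal{R}\,\mathcal{M}_0[\beta_i] \;+\; \sum_i \mathcal{M}_0[\alpha_i]\,\mathcal{L}_{(-1,1)}[\mathcal{M}_0[\beta_i],\mathcal{R}]\\
 &= -2\,\mathcal{M}_0\!\left[\sum_i \alpha_i(t)\beta_i(t)\right] + \mathcal{K}_1
 \;=\; -2\,\mathcal{M}_0[\pi A(t,t)] + \mathcal{K}_1
 \;=\; I + \mathcal{K}_1,
\end{align*}
where the second equality uses $\mathcal{L}_{(-1,1)}\mathcal{R}=-2I$ together with $\mathcal{M}_0[\alpha]\mathcal{M}_0[\beta]=\mathcal{M}_0[\alpha\beta]$ (both factors multiply by functions of the same underlying variable), and the last uses $\pi A(x,x)=-\tfrac12$. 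The operator $\mathcal{K}_1$ collects all commutator terms.

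The heart of the argument is to show that $\mathcal{K}_1=\sum_i \mathcal{M}_0[\alpha_i]\,\mathcal{L}_{(-1,1)}[\mathcal{M}_0[\beta_i],\mathcal{R}]$ is compact on $\ell_\lambda^2$. Entrywise, $[\mathcal{R},\mathcal{M}_0[\beta_i]]_{jk} = (\mathcal{R}_{jj}-\mathcal{R}_{kk})(\mathcal{M}_0[\beta_i])_{jk}$, which behaves like $2(j-k)$ times the $(j,k)$ entry of a multiplication by the analytic $\beta_i$; geometric off-diagonal decay of $\mathcal{M}_0[\beta_i]$ absorbs the linear factor $|j-k|$, so the commutator is bounded on $\ell_\lambda^2$. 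Composing on the left with $\mathcal{L}_{(-1,1)}$, which is bounded $\ell_\mu^2\to\ell_{\mu+1}^2$ for every $\mu$, yields an operator $\ell_\lambda^2\to\ell_{\lambda+1}^2$, and the compact inclusion $\ell_{\lambda+1}^2\hookrightarrow\ell_\lambda^2$ delivers compactness of each summand. Uniformity of the bounds together with geometric convergence of the low-rank series then upgrades this to compactness of the whole $\mathcal{K}_1$.

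Finally, $\Sigma_{(-1,1)}[\pi B]\mathcal{R}$ is compact by a simpler rank argument: because $\Sigma_{(-1,1)}$ is a bounded functional, each term in the low-rank expansion of $\Sigma_{(-1,1)}[\pi B]$ is a rank-one outer product, and analyticity of $B$ makes the series converge in operator norm, so $\Sigma_{(-1,1)}[\pi B]$ is a norm-limit of finite-rank operators; composition on the right with the bounded $\mathcal{R}:\ell_\lambda^2\to\ell_{\lambda-1}^2$ preserves compactness. Setting $\mathcal{K}:=\mathcal{K}_1+\Sigma_{(-1,1)}[\pi B]\,\mathcal{R}$ completes the proof. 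The principal obstacle is the commutator/smoothing estimate above, because it requires a careful bookkeeping of the orders of $\mathcal{L}_{(-1,1)}$ and $\mathcal{R}$ on the $\ell_\lambda^2$ scale simultaneously with the geometric off-diagonal decay of multiplication by analytic symbols.
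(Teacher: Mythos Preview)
Your proof is correct and takes a genuinely different route from the paper's. The paper first splits $A(x,y)=A(x,x)+\tilde A(x,y)$, reads off $\mathcal{L}_{(-1,1)}[\pi A(x,x)]\,\mathcal{R}=I$ directly from $\pi A(x,x)=-\tfrac12$, and then argues compactness of the $\tilde A$ and $B$ contributions by pairing compactness of the scalar operator $\mathcal{L}_{(-1,1)}:\ell_\lambda^2\to\ell_\lambda^2$ (and finite rank of $\Sigma_{(-1,1)}$) with boundedness of the surrounding multiplication factors; the order mismatch introduced by $\mathcal{R}$ is disposed of by asserting that $\mathcal{M}_0[\tilde A_{2,i}]\,\mathcal{R}:\ell_\lambda^2\to\ell_\lambda^2$ is bounded for each analytic $y$-factor. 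You instead keep $A$ intact and commute $\mathcal{R}$ past $\mathcal{M}_0[\beta_i]$: the leading term collapses via $\mathcal{L}_{(-1,1)}\mathcal{R}=-2I$ together with $\sum_i\mathcal{M}_0[\alpha_i]\mathcal{M}_0[\beta_i]=\mathcal{M}_0[\pi A(t,t)]=-\tfrac12 I$, and the commutator remainder is compact because the linear growth $\mathcal{R}_{jj}-\mathcal{R}_{kk}\sim 2(j-k)$ is absorbed by the geometric off-diagonal decay of $\mathcal{M}_0[\beta_i]$, after which $\mathcal{L}_{(-1,1)}$ gains one order $\ell_\lambda^2\to\ell_{\lambda+1}^2$ and the embedding $\ell_{\lambda+1}^2\hookrightarrow\ell_\lambda^2$ supplies compactness. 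What your approach buys is a fully explicit bookkeeping of the order balance between $\mathcal{R}$ and $\mathcal{L}_{(-1,1)}$ that never needs any individual low-rank factor to be smoothing; what the paper's approach buys is a classical ``principal symbol plus remainder'' decomposition that avoids commutator algebra altogether.
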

\begin{proof}
Since $A(x,x) = -(2\pi)^{-1}$, we let $\tilde{A}(x,y) \equiv A(x,y) - A(x,x)$ and separate the operator~\eqref{eq:DirichletIE} as:
\begin{equation}
{\cal L}_{(-1,1)}[\pi A(x,x)] + {\cal L}_{(-1,1)}[\pi \tilde{A}(x,y)] + \Sigma_{(-1,1)}[\pi B].
\end{equation}
It is straightforward to show
\begin{equation}
{\cal L}_{(-1,1)}[\pi A(x,x)]{\cal R} = I : \ell_\lambda^2\to\ell_\lambda^2.
\end{equation}
Then, we need to show that the remainder is compact. Since:
\begin{equation}
\|{\cal P}_n{\cal L}_{(-1,1)}{\cal P}_n^\top - {\cal L}_{(-1,1)}\|\to0\quad{\rm as}\quad n\to\infty,
\end{equation}
${\cal L}_{(-1,1)} : \ell_\lambda^2 \to\ell_\lambda^2 $ is compact. Compactness of $\Sigma_{(-1,1)}$ is implied by its finite-rank.
Expanding $\tilde{A}$ and $B$ in low rank Chebyshev approximants, we have:
\begin{equation}
\pi\left(\sum_{i=1}^{k_{\tilde{A}}} {\cal M}_0[\tilde{A}_{1,i}(x)] {\cal L}_{(-1,1)} {\cal M}_0[\tilde{A}_{2,i}(y)] + \sum_{i=1}^{k_B} {\cal M}_0[B_{1,i}(x)] \Sigma_{(-1,1)} {\cal M}_0[B_{2,i}(y)]\right) {\cal R}.
\end{equation}
Since $A$ and $B$ are analytic with respect to $y$, then for every $i$ and for every $\lambda\in\mathbb{R}$:
\begin{subequations}
\begin{align}
{\cal M}_0[\tilde{A}_{2,i}(y)] : \ell_{\lambda-1}^2 \to \ell_\lambda^2 \quad &\Longrightarrow \quad {\cal M}_0[\tilde{A}_{2,i}(y)] {\cal R} : \ell_\lambda^2 \to \ell_\lambda^2,\\
{\cal M}_0[B_{2,i}(y)] : \ell_{\lambda-1}^2 \to \ell_\lambda^2 \quad &\Longrightarrow \quad {\cal M}_0[B_{2,i}(y)] {\cal R} : \ell_\lambda^2 \to \ell_\lambda^2,
\end{align}
\end{subequations}
are bounded. Compactness follows from the linear combination of a product of bounded and compact operators being compact.
\end{proof}

\begin{lemma}\label{lemma:NeumannI+K}
For the Neumann problem singular integral operator in~\eqref{eq:NeumannIE}, if $\Phi$ takes the form~\eqref{eq:GFisAlogplusB} with $A$ and $B$ analytic in both $x$ and $y$ and if we take ${\cal R}$ to be
\begin{equation}
{\cal R} = -2\begin{pmatrix}
1\\
& \frac{1}{2}\\
& & \frac{1}{3}\\
& & & \frac{1}{4}\\
& & & & \ddots
\end{pmatrix} : \ell_{\lambda}^2\to\ell_{\lambda+1}^2,
\end{equation}
then:
\begin{equation}
\begin{pmatrix}{\cal H}'_{\mathbb{I}}[-\pi A] + {\cal L}_{\mathbb{I}}[\pi A''] + \Sigma_{\mathbb{I}}[\pi B'']\end{pmatrix}{\cal R} = I + {\cal K},
\end{equation}
where the two primes indicate:
\begin{equation}
A''(x,y) = \left.\dfrac{\partial^2 A({\bf x},{\bf y})}{\partial x_2\partial y_2}\right|_{{\bf x},{\bf y} = (x,0),(y,0)},
\end{equation}
and where ${\cal K} : \ell_\lambda^2\to\ell_\lambda^2$ is compact for $\lambda\in\mathbb{R}$.
\end{lemma}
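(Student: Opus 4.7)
The plan is to follow the template of Lemma~\ref{lemma:DirichletI+K}: split $A(x,y) = A(x,x) + \tilde{A}(x,y)$ with $A(x,x) = -(2\pi)^{-1}$ constant and $\tilde{A}(x,x) \equiv 0$, treat the constant piece directly to obtain the identity, and show the three remaining residuals are compact on $\ell^2_\lambda$.

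For the identity piece, since $-\pi A(x,x) = 1/2$, the explicit formula $\mathcal{H}'_{\mathbb{I}}[U_n(x)\sqrt{1-x^2}] = -(n+1)C_n^{(1)}(x)$ gives $\mathcal{H}'_{\mathbb{I}}[-\pi A(x,x)] = -\tfrac{1}{2}\,\mathrm{diag}(1,2,3,\ldots)$ as a map from weighted Chebyshev-$U$ to $C^{(1)}$ coefficients. Composing with $\mathcal{R} = -2\,\mathrm{diag}(1,\tfrac{1}{2},\tfrac{1}{3},\ldots)$ on the right yields the identity on $\ell^2_\lambda$ exactly.

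For the two ``easy'' residuals $\mathcal{L}_{\mathbb{I}}[\pi A'']\mathcal{R}$ and $\Sigma_{\mathbb{I}}[\pi B'']\mathcal{R}$, the argument of Lemma~\ref{lemma:DirichletI+K} transfers: the nonzero matrix entries of $\mathcal{L}_{\mathbb{I}}$ decay like $1/n$, so $\|\mathcal{P}_n\mathcal{L}_{\mathbb{I}}\mathcal{P}_n^\top - \mathcal{L}_{\mathbb{I}}\|\to 0$ and $\mathcal{L}_{\mathbb{I}}:\ell^2_\lambda\to\ell^2_\lambda$ is compact, while $\Sigma_{\mathbb{I}}$ is of rank one and therefore compact. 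Low-rank expanding $A''$ and $B''$ in analytic bivariate approximants makes every multiplication $\mathcal{M}_0[\,\cdot\,]$ and $\mathcal{M}_1[\,\cdot\,]$ bounded on $\ell^2_\lambda$, and $\mathcal{R}:\ell^2_\lambda\to\ell^2_{\lambda+1}$ is bounded, so each residual is a finite linear combination of bounded $\times$ compact $\times$ bounded compositions, hence compact.

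The main obstacle is the hypersingular residual $\mathcal{H}'_{\mathbb{I}}[-\pi\tilde{A}]\mathcal{R}$: unlike $\mathcal{L}_{\mathbb{I}}$, the operator $\mathcal{H}'_{\mathbb{I}}$ has diagonal entries growing like $n$ and is unbounded on $\ell^2_\lambda$, so the preceding compact-times-bounded argument fails unless the vanishing of $\tilde{A}$ on the diagonal is used. The plan is to invoke the analytic division lemma, writing $\tilde{A}(x,y) = (y-x)E(x,y)$ with $E$ analytic in both variables; the cancellation of one factor of $y-x$ reduces the Hadamard finite part to a Cauchy principal value,
\begin{equation*}
\mathcal{H}'_{\mathbb{I}}[-\pi\tilde{A}]u(x) = -\ddashint_{-1}^1 \frac{\tilde{A}(x,y)u(y)}{(y-x)^2}\,{\rm d}y = -\dashint_{-1}^1 \frac{E(x,y)u(y)}{y-x}\,{\rm d}y = -\pi\,\mathcal{H}_{\mathbb{I}}[E]u(x),
\end{equation*}
turning the hypersingular operator into a Cauchy singular one with analytic kernel $E$. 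Low-rank expansion of $E$ then expresses $\mathcal{H}_{\mathbb{I}}[E]$ as a finite linear combination of bounded multiplications composed with the banded (hence bounded) $\mathcal{H}_{\mathbb{I}}$, so $\mathcal{H}_{\mathbb{I}}[E]:\ell^2_\lambda\to\ell^2_\lambda$ is bounded. Since $\mathcal{R}$ viewed as a map $\ell^2_\lambda\to\ell^2_\lambda$ is diagonal with entries $-2/(n+1)\to 0$, it is compact, and bounded $\times$ compact is compact. Assembling the four pieces gives $I + \mathcal{K}$ with $\mathcal{K}$ compact on $\ell^2_\lambda$.
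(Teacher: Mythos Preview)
Your proof is correct and shares the paper's scaffolding---split $A=A(x,x)+\tilde A$, get $I$ from the constant piece via ${\cal H}'_{\mathbb I}[-\pi A(x,x)]{\cal R}=I$, and show the three residuals are compact---but handles the hypersingular residual by a genuinely different device. The paper keeps the low-rank expansion of $\tilde A$ and argues that, by analyticity in $y$, each factor ${\cal M}_1[\tilde A_{2,i}(y)]$ maps $\ell^2_\lambda\to\ell^2_{\lambda+1}$ boundedly, so that ${\cal H}'_{\mathbb I}{\cal M}_1[\tilde A_{2,i}]:\ell^2_\lambda\to\ell^2_\lambda$ is bounded and compactness is supplied by ${\cal R}$; similarly it treats ${\cal L}_{\mathbb I}$ and $\Sigma_{\mathbb I}$ merely as bounded and again lets ${\cal R}$ carry the compactness. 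You instead exploit $\tilde A(x,x)=0$ through the analytic division $\tilde A(x,y)=(y-x)E(x,y)$, which collapses the Hadamard finite part to a Cauchy principal value and replaces the unbounded ${\cal H}'_{\mathbb I}$ by the bounded ${\cal H}_{\mathbb I}$; compactness then again comes from ${\cal R}$. Your route is more elementary and makes the role of the diagonal zero explicit, avoiding the smoothness-raising claim for multiplication operators that is delicate to justify term by term; the paper's route has the virtue of staying entirely within the low-rank operator calculus already set up. One minor bookkeeping point: in your identity ${\cal H}'_{\mathbb I}[-\pi\tilde A]=-\pi\,{\cal H}_{\mathbb I}[E]$ the two sides land in different range bases ($C^{(1)}$ versus $T$), so an ${\cal S}_0$ belongs on the right; since ${\cal S}_0$ is bounded this does not affect the argument.
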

\begin{proof}
Since $A(x,x) = -(2\pi)^{-1}$, we let $\tilde{A}(x,y) \equiv A(x,y) - A(x,x)$ and separate the operator~\eqref{eq:NeumannIE} as:
\begin{equation}
{\cal H}'_{\mathbb{I}}[-\pi A(x,x)] + {\cal H}'_{\mathbb{I}}[-\pi\tilde{A}(x,y)] + {\cal L}_{\mathbb{I}}[\pi A''] + \Sigma_{\mathbb{I}}[\pi B''].
\end{equation}
It is straightforward to show:
\begin{equation}
{\cal H}'_{\mathbb{I}}[-\pi A(x,x)]{\cal R} = I : \ell_\lambda^2\to\ell_\lambda^2.
\end{equation}
Then, we need to show that the remainder is compact. Since:
\begin{equation}
\|{\cal P}_n{\cal R}{\cal P}_n^\top - {\cal R}\|\to0\quad{\rm as}\quad n\to\infty,
\end{equation}
${\cal R} : \ell_\lambda^2 \to\ell_\lambda^2 $ is compact. Furthermore, showing boundedness of ${\cal S}_0, {\cal L}_{\mathbb{I}}, \Sigma_{\mathbb{I}}:\ell_\lambda^2\to\ell_\lambda^2$ and ${\cal H}'_{\mathbb{I}}:\ell_{\lambda+1}^2\to\ell_\lambda^2$ is straightforward. Expanding $\tilde{A}$, $A''$ and $B''$ in low rank Chebyshev and ultraspherical approximants, we have:
\begin{align}
&\pi\left(-\sum_{i=1}^{k_{\tilde{A}}} {\cal M}_1[\tilde{A}_{1,i}(x)] {\cal H}'_{\mathbb{I}} {\cal M}_1[\tilde{A}_{2,i}(y)]\right.\nonumber\\
& + \left.{\cal S}_0\left(\sum_{i=1}^{k_{A''}} {\cal M}_0[A''_{1,i}(x)] {\cal L}_{\mathbb{I}} {\cal M}_1[A''_{2,i}(y)] + \sum_{i=1}^{k_{B''}} {\cal M}_0[B''_{1,i}(x)] \Sigma_{\mathbb{I}} {\cal M}_1[B''_{2,i}(y)]\right)\right) {\cal R}.
\end{align}
Since $A$ and $B$ are analytic with respect to $y$, then for every $i$ and for every $\lambda\in\mathbb{R}$:
\begin{equation}
{\cal M}_1[\tilde{A}_{2,i}(y)] : \ell_{\lambda}^2 \to \ell_{\lambda+1}^2 \quad \Longrightarrow \quad {\cal H}'_{\mathbb{I}}{\cal M}_1[\tilde{A}_{2,i}(y)] : \ell_\lambda^2 \to \ell_\lambda^2,
\end{equation}
are bounded. Compactness follows from the linear combination of a product of bounded and compact operators being compact.
\end{proof}

\begin{remarks}~
\begin{enumerate}
\item For complicated fundamental solutions whose bivariate low rank Chebyshev approximants have large degrees, preconditioners such as those in Lemmas~\ref{lemma:DirichletI+K} and~\ref{lemma:NeumannI+K} allow for continuous Krylov subspace methods or conjugate gradients on the normal equations to converge in a relatively fewer number of iterations compared with the un-preconditioned operators.   Furthermore, the low rank Chebyshev approximants allow for the operator-function product to be carried out in ${\cal O}((m+n)\log(m+n))$, where $m$ is the largest degree of a multiplication operator and $n$ is the degree of the Chebyshev approximant of the solution.  Iterative solvers are outside the scope of this article, however.
\item Operator preconditioners~\cite{Hiptmair-52-699-06} can also be derived which would yield similar $I+{\cal K}$ results. However, working in coefficient space allows for a simpler exposition.
\end{enumerate}
\end{remarks}

\subsection{Numerical evaluation of Cauchy and log transforms on intervals}~\label{subsection:logkernel} Fast and spectrally accurate numerical evaluation of the scattered far-field can be derived from Clenshaw--Curtis integration of the fundamental solution multiplied by the density. For each evaluation point, the fundamental solution can be sampled at the $2N$ roots of the $2N^{\rm th}$ degree Chebyshev polynomial, where $N$ is the length of the polynomial representation of the density. Since the resulting density may be as complicated\footnote{In the Helmholtz equation, for example, both the density and the fundamental solution are oscillatory with the same wavenumber.} as the fundamental solution itself, doubling the length is sufficient to resolve the coefficients of the fundamental solution multiplied by the density.

It is well known that such an evaluation technique is inaccurate near the boundary~\cite{Barnett-36-A427-14}. In the context of Riemann--Hilbert problems, spectrally accurate evaluation {\it near and far from} $\Gamma$ can be obtained by exact integration of a modified Chebyshev series that encodes vanishing conditions at the endpoints.

Consider the modified Chebyshev series:
\begin{equation}
\hat{T}_0(x) := 1,\quad \hat{T}_1(x) := x, \quad \hat{T}_n(x) := T_n(x) - T_{n-2}(x),\qquad n\ge2.
\end{equation}
If we expand $u$ in a Chebyshev series and this modified Chebyshev series:
\begin{equation}
u(x) = \sum_{n=0}^\infty u_n T_n(x) = \sum_{n=0}^\infty \hat{u}_n\hat{T}_n(x),
\end{equation}
then we have the relation:
\begin{equation}
\begin{pmatrix}
u_0\\u_1\\u_2\\\vdots\\
\end{pmatrix}
=
\begin{pmatrix}
1 & 0 & -1\\
& 1 & 0 & -1\\
& & 1 & 0 & -1\\
& & & \ddots & \ddots & \ddots\\
\end{pmatrix}
\begin{pmatrix}
\hat{u}_0\\\hat{u}_1\\\hat{u}_2\\\vdots\\
\end{pmatrix}.
\end{equation}
Therefore, any finite sequences $\{u_n\}_{n=0}^N$ and $\{\hat{u}_n\}_{n=0}^N$ can be transformed to the other in ${\cal O}(N)$ operations, either via forward application of the banded operator, or via an in-place back substitution.

Lemmas \ref{Lemma:CauchyUkBasis} and \ref{Lemma:InvSqrtCauchy} contain formul\ae\ for Cauchy transforms of weighted Chebyshev polynomials evaluated in the complex plane.  These were originally derived in this form in~\cite{Trogdon-Olver-15}, based on results in~\cite{Olver-11-153-11,Olver-80-1745-11,Olver-163-1185-11,Olver-Trogdon-39-101-13}.  These formul\ae~are adapted in Lemma~\ref{Lemma:LogKernelFormula} for the log transform as well.

\begin{definition}
Define the Joukowsky transform:
\begin{equation}
J(z) := \dfrac{z + z^{-1}}{2},
\end{equation}
and one of its inverses:
\begin{equation}
\Jin(z) := z - \sqrt{z-1} \sqrt{z+1},
\end{equation}
which maps the slit plane $\mathbb{C}\setminus\mathbb{I}$ to the unit disk.
\end{definition}
The Joukowsky transform is useful for proving and summarizing the following results.

\begin{lemma}[Lemma 5.6~\cite{Trogdon-Olver-15}]\label{Lemma:CauchyUkBasis} For $k \geq 0$:
\begin{equation}
\CC_{\mathbb I}[\sqrt{1 - \diamond^2} U_k](z) = \dfrac{\I}{2}\Jin(z)^{k + 1}.
\end{equation}
\end{lemma}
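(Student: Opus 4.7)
The plan is to convert the integral defining $\CC_{\mathbb I}[\sqrt{1-\diamond^2}U_k]$ into a closed contour integral over the unit circle via the Joukowsky substitution $\zeta = J(w) = (w+w^{-1})/2$, and then evaluate it by the residue theorem.

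Setting $\zeta = \cos\theta$ in Definition~\ref{definition:Cauchy}, and using the identities $\sqrt{1-\cos^2\theta} = \sin\theta$ and $U_k(\cos\theta) = \sin((k+1)\theta)/\sin\theta$, the integral reduces to
\[
\CC_{\mathbb I}[\sqrt{1-\diamond^2}U_k](z) = \frac{1}{2\pi\I}\int_0^\pi \frac{\sin\theta\,\sin((k+1)\theta)}{\cos\theta - z}\,d\theta.
\]
The integrand is an even function of $\theta$, so I may replace $\int_0^\pi$ by $\half\int_{-\pi}^\pi$, and then substitute $w = \E^{\I\theta}$. Writing $\sin\theta$, $\sin((k+1)\theta)$ and $\cos\theta$ in terms of $w^{\pm 1}$, and using $\cos\theta - z = (w^2 - 2zw + 1)/(2w)$, the Cauchy transform becomes
\[
\CC_{\mathbb I}[\sqrt{1-\diamond^2}U_k](z) = \frac{1}{8\pi}\oint_{|w|=1}\frac{(w-w^{-1})(w^{k+1}-w^{-k-1})}{(w-w_-)(w-w_+)}\,dw,
\]
where $w_\pm$ are the two preimages of $z$ under $J$. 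They satisfy $w_+w_- = 1$, and the branch chosen in the definition of $\Jin$ places $w_- = \Jin(z)$ strictly inside the unit disk and $w_+ = 1/\Jin(z)$ strictly outside.

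Only the singularities inside $|w|=1$ contribute: the simple pole at $w_-$, and a pole at $w=0$ coming from the $w^{-k}$ and $w^{-k-2}$ terms in the expanded numerator. The residue at $w_-$ is immediate. For the residue at $w=0$, I would expand
\[
\frac{1}{(w-w_-)(w-w_+)} = \sum_{\ell \geq 0}\frac{w_+^{\ell+1}-w_-^{\ell+1}}{w_+ - w_-}\,w^\ell
\]
by partial fractions combined with $w_+w_-=1$, and read off the residue as the appropriate coefficient. Summing the contributions at $w_-$ and at $w=0$ and using $w_+ - w_- = (1-w_-^2)/w_-$, the cross terms telescope and the total residue inside the disk collapses cleanly to $2w_-^{k+1}$. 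Multiplying by $2\pi\I$ from the residue theorem and by the $(8\pi)^{-1}$ prefactor yields
\[
\CC_{\mathbb I}[\sqrt{1-\diamond^2}U_k](z) = \frac{\I}{2}\Jin(z)^{k+1},
\]
as claimed.

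The main obstacle is bookkeeping rather than any deep analytical ingredient: keeping the overall factor of $\I$, the $(2\pi)^{-1}$ from the Cauchy transform, and the $(2\I)^{-1}$ factors from rewriting sines in terms of exponentials all consistent; and making the branch choice explicit so that precisely the root $w_- = \Jin(z)$, and not its reciprocal, lies inside the contour --- which is exactly the content of the definition of $\Jin$ as the inverse of $J$ mapping $\mathbb{C}\setminus\mathbb{I}$ into the unit disk. Once these conventions are pinned down, the residue calculation is essentially forced.
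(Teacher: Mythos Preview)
Your computation is correct; the residue bookkeeping checks out, and the collapse to $2w_-^{k+1}$ is exactly as you describe (with the $k=0$ case handled by noting the $-w^{-k}$ term becomes analytic at the origin, leaving only the $w^{-k-2}$ contribution).

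The paper, however, takes a different and much shorter route: rather than computing the integral, it proposes the formula $\tfrac{\I}{2}\Jin(z)^{k+1}$ and verifies that it satisfies the Sokhotski--Plemelj jump relation. Concretely, it uses $\Jin(x\pm\I\epsilon)\to\E^{\mp\I\theta}$ for $x=\cos\theta$, so the jump of $\tfrac{\I}{2}\Jin(z)^{k+1}$ across $\mathbb I$ is $\sqrt{1-x^2}\,U_k(x)$; together with the evident analyticity in $\mathbb C\setminus\mathbb I$ and decay at infinity (since $|\Jin(z)|<1$ there), this characterizes the Cauchy transform uniquely. Your approach has the advantage of deriving the formula from scratch without needing to guess it, and it makes the role of the Joukowsky map as a uniformizing substitution very explicit. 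The paper's approach is faster once the answer is known, and it generalizes more readily to the other formulae in the section (the $T_k$ and $\hat T_k$ cases, and the log transforms obtained by antidifferentiation), since one only has to check jumps rather than redo a residue calculation each time.
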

\begin{proof}
We verify that the Sokhotski-Plemelj lemma is satisfied. Note that for $x =\cos \theta$ we have:
\begin{equation}
\lim_{\epsilon\searrow 0}\Jin(x\pm \I \epsilon) =  x \mp \I \sqrt{1-x^2}= \cos \theta \mp \I \sin \theta = \E^{\mp\I \theta}.
\end{equation}
It follows that:
\begin{align}
\lim_{\epsilon\searrow 0}\dfrac{\Jin(\cos\theta+ \I \epsilon)^{k + 1} - \Jin(\cos\theta- \I \epsilon)^{k + 1} }{ 2 \I}& = \dfrac{\E^{-\I (k+1) \theta} - \E^{\I (k+1) \theta} }{ 2 \I},\nonumber\\
= -\sin (k+1) \theta & = -U_k(\cos \theta) \sqrt{1-\cos^2 \theta}.
\end{align}
\end{proof}

\begin{lemma}[Lemma 5.11~\cite{Trogdon-Olver-15}]\label{Lemma:InvSqrtCauchy} For $k \geq 2$:
\begin{align}
\CC_{(-1,1)}\br[\dfrac{1 }{ \sqrt{1 - \diamond^2}}]\!(z) & = \dfrac{\I }{ 2 \sqrt{z-1}\sqrt{z+1}},\\
\CC_{(-1,1)}\br[\dfrac{\diamond }{ \sqrt{1 - \diamond^2}}]\!(z) & = \dfrac{\I z }{ 2\sqrt{z-1}\sqrt{z+1}} - \dfrac{\I }{ 2},\quad{\rm and}\\
\CC_{(-1,1)}\br[\dfrac{\hat{T}_k}{ \sqrt{1 - \diamond^2}}]\!(z) & = -\I\, \Jin(z)^{k-1}.
\end{align}
\end{lemma}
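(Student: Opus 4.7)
The plan is to verify all three formulas by the same strategy used in Lemma~\ref{Lemma:CauchyUkBasis}: since $\CC_{(-1,1)}[g](z)$ is uniquely characterised as the function analytic in $\mathbb{C}\setminus\mathbb{I}$, decaying to zero at infinity, and having the jump $f^+(x)-f^-(x) = g(x)$ for $x\in(-1,1)$ (the Sokhotski--Plemelj jump relation for the Cauchy transform), it suffices to check for each candidate (i) analyticity off $\mathbb{I}$, (ii) the jump across $(-1,1)$, and (iii) decay at infinity.

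The key boundary-value computation, identical to the one used in the proof of Lemma~\ref{Lemma:CauchyUkBasis}, is
\begin{equation*}
\lim_{\epsilon\searrow 0}\sqrt{z-1}\sqrt{z+1}\bigl|_{z=x\pm\I\epsilon} = \pm\I\sqrt{1-x^2},\qquad \lim_{\epsilon\searrow 0}\Jin(x\pm\I\epsilon) = \E^{\mp\I\theta},
\end{equation*}
for $x=\cos\theta\in(-1,1)$. Given these, the first two formulas follow by direct evaluation: for $f(z)=\I/(2\sqrt{z-1}\sqrt{z+1})$ the jump is $\I/(2\I\sqrt{1-x^2})-\I/(-2\I\sqrt{1-x^2}) = 1/\sqrt{1-x^2}$, and for $f(z)=\I z/(2\sqrt{z-1}\sqrt{z+1})-\I/2$ the $-\I/2$ constant drops out of the jump, leaving $x/\sqrt{1-x^2}$. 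Decay at infinity is verified from the expansion $\sqrt{z-1}\sqrt{z+1}=z\sqrt{1-1/z^2}=z-1/(2z)+{\cal O}(z^{-3})$: the first candidate decays like $1/z$, and in the second the leading $\I/2$ cancels, giving ${\cal O}(z^{-2})$ decay.

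For the third formula, with $f(z)=-\I\Jin(z)^{k-1}$, the boundary values give
\begin{equation*}
f^+(x)-f^-(x)=-\I\E^{-\I(k-1)\theta}+\I\E^{\I(k-1)\theta}=-2\sin((k-1)\theta).
\end{equation*}
To match the density $\hat{T}_k(x)/\sqrt{1-x^2}$ we invoke the product-to-sum identity
\begin{equation*}
\cos(k\theta)-\cos((k-2)\theta)=-2\sin((k-1)\theta)\sin\theta,
\end{equation*}
so $\hat{T}_k(\cos\theta)/\sqrt{1-\cos^2\theta}=-2\sin((k-1)\theta)$, matching the jump. Decay at infinity follows from $\Jin(z)\sim 1/(2z)$ as $z\to\infty$, so $\Jin(z)^{k-1}\to 0$ for $k\geq 2$, while analyticity of $\Jin$ on $\mathbb{C}\setminus\mathbb{I}$ is inherited from that of $\sqrt{z-1}\sqrt{z+1}$.

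The only non-routine step is the trigonometric identity for the $\hat{T}_k$ jump, which is precisely the motivation for introducing the modified basis: it converts the two-term difference $T_k-T_{k-2}$ into a single product whose $\sin\theta$ factor cancels the weight $\sqrt{1-x^2}$ and yields a pure exponential/power of $\Jin(z)$ under the Cauchy transform. Everything else reduces to bookkeeping of boundary values of $\sqrt{z^2-1}$ and the asymptotic expansion of $\Jin$ at infinity.
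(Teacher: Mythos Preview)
Your proof is correct and follows the same approach as the paper: verify the Sokhotski--Plemelj jump relation using the boundary values of $\sqrt{z-1}\sqrt{z+1}$ and $\Jin(z)$, and for the $\hat{T}_k$ case reduce to the trigonometric identity $\cos k\theta-\cos(k-2)\theta=-2\sin(k-1)\theta\,\sin\theta$. The paper's proof is essentially a one-line reference to Sokhotski--Plemelj plus that same identity; you have simply made explicit the remaining conditions (analyticity off $\mathbb{I}$ and decay at infinity) needed for the characterisation to be a genuine uniqueness argument, which the paper leaves implicit.
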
	
\begin{proof}
The first two parts follow immediately from the Sokhotski-Plemelj lemma. The last part follows since:
\begin{equation}
\sin (k-1) \theta = \dfrac{\cos k \theta - \cos (k-2) \theta }{ 2 \sin \theta}.
\end{equation}
\end{proof}

We extend these results here to the log transform.

\begin{lemma}\label{Lemma:LogKernelFormula}
\begin{align}
		{\cal L}_{\mathbb{I}} \br[{ \sqrt{1-\diamond^2}}](z)&=\Re \dfrac{\Jin(z)^2 }{ 4} - \dfrac{\log \abs{\Jin(z)} +\log 2 }{ 2},\\	
		{\cal L}_{\mathbb{I}} \br[{U_k \sqrt{1-\diamond^2}}](z)&=\half \Re \br[ \dfrac{\Jin(z)^{k+2} }{ k+2} - \dfrac{\Jin(z)^{k} }{ k}],\\
		{\cal L}_{(-1,1)} \br[\dfrac{1 }{ \sqrt{1 - \diamond^2}}](z)&=-\log\abs{\Jin(z)} - \log 2,\\
		{\cal L}_{(-1,1)} \br[\dfrac{\diamond }{ \sqrt{1 - \diamond^2}}](z)&=-\Re\Jin(z),\\		
		{\cal L}_{(-1,1)} \br[\dfrac{\hat{T}_2 }{ \sqrt{1 - \diamond^2}}](z)&=\log\abs{\Jin(z)} + \log 2 - \Re \dfrac{\Jin(z)^2}{ 2},\quad{\rm and}\\
		{\cal L}_{(-1,1)} \br[\dfrac{\hat{T}_k }{ \sqrt{1 - \diamond^2}}](z)&=\Re \br[ \dfrac{\Jin(z)^{k-2} }{ k-2} - \dfrac{\Jin(z)^{k} }{ k}].
\end{align}
\end{lemma}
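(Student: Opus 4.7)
The plan is to reduce to the Cauchy transforms given by Lemmas~\ref{Lemma:CauchyUkBasis} and~\ref{Lemma:InvSqrtCauchy}, then integrate using the Joukowsky substitution. For any real-valued density $f$ on $\mathbb{I}$, I choose a branch of $\log(\zeta - z)$ whose cut avoids $\mathbb{I}$ and define
\begin{equation*}
F(z) := \frac{1}{\pi}\int_{-1}^1 \log(\zeta - z)\, f(\zeta)\dkf\zeta,
\end{equation*}
so that ${\cal L}_\Gamma f(z) = \Re F(z)$. Differentiation under the integral sign gives $F'(z) = -2\I\,{\cal C}_\Gamma f(z)$, which by Lemmas~\ref{Lemma:CauchyUkBasis} and~\ref{Lemma:InvSqrtCauchy} equals $\Jin(z)^{k+1}$ when $f = U_k\sqrt{1-\diamond^2}$ and $-2\Jin(z)^{k-1}$ when $f = \hat{T}_k/\sqrt{1-\diamond^2}$ with $k\ge 2$; the two monomial-weight cases come directly from Lemma~\ref{Lemma:InvSqrtCauchy}.

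Next I integrate each $F'$ by the substitution $w = \Jin(z)$, i.e.\ $z = J(w) = (w+w^{-1})/2$ and $\dkf z = (w^2-1)/(2w^2)\,\dkf w$, which gives
\begin{equation*}
\int \Jin(z)^{m}\dkf z = \int \frac{w^{m} - w^{m-2}}{2}\dkf w.
\end{equation*}
This is elementary and yields a polynomial in $\Jin(z)$ unless $m=1$, in which case a $\tfrac{1}{2}\log\Jin(z)$ contribution appears; this distinguished exponent is hit exactly in the first formula ($U_0\sqrt{1-\diamond^2}$, giving $m=1$) and in the fifth ($\hat T_2/\sqrt{1-\diamond^2}$, giving $m=1$ after absorbing the factor of $-2$). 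For the two monomial-weight entries of Lemma~\ref{Lemma:InvSqrtCauchy}, where $F'(z)$ is not a pure power of $\Jin(z)$, I instead use the identities $(\log\Jin(z))' = -1/\sqrt{z-1}\sqrt{z+1}$ and the substitution $u = \sqrt{z-1}\sqrt{z+1}$ to produce antiderivatives $-\log\Jin(z)$ and $z - \Jin(z)$, respectively.

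The integration constant in each case is fixed by asymptotics at $z = \infty$. Since $\Jin(z) = 1/(2z) + O(|z|^{-3})$, one has $\log|\Jin(z)| = -\log 2 - \log|z| + o(1)$, whereas a direct expansion of $\log|\zeta - z|$ gives ${\cal L}_\Gamma f(z) = (\log|z|/\pi)\int_\Gamma f(\zeta)\dkf\zeta + o(1)$. Matching the leading $\log|z|$ contributions pins down $\Re C$ in each identity; the isolated $\tfrac{1}{2}\log 2$ and $\log 2$ terms in the first, third, and fifth formulas arise precisely from the $-\log 2$ constant in the expansion of $\log|\Jin(z)|$. The main bookkeeping obstacle is this asymptotic matching in the three cases carrying a logarithm, since for the remaining (polynomial) identities both sides vanish at infinity and the constant is automatically zero.
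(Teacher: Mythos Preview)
Your proof is correct and follows essentially the same route as the paper's: relate ${\cal L}_\Gamma$ to an antiderivative of the Cauchy transform, compute the indefinite integrals of powers of $\Jin(z)$ via the Joukowsky substitution (the paper simply lists these antiderivatives, citing \cite[\S 5.4.4]{Trogdon-Olver-15}), and pin down the integration constants by matching the $\log|z|$ behaviour at infinity using $\Jin(z)\sim 1/(2z)$. The only difference is cosmetic---you make the substitution and the $F'(z)=-2\I\,{\cal C}_\Gamma f(z)$ link explicit, whereas the paper just states the resulting antiderivatives.
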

\begin{proof}
These formul\ae\ follow from integrating the formul\ae\ for Cauchy transforms and taking the real part.  We can compute the indefinite integrals directly~\cite[\S 5.4.4]{Trogdon-Olver-15}:
\begin{align}
		\int^z  \dfrac{1 }{  \sqrt{z - 1} \sqrt{z + 1}} \dkf z &=- \log \Jin(z),\\
		\int^z  \dfrac{1-z }{  \sqrt{z - 1} \sqrt{z + 1}}  \dkf z &= \Jin(z),\\
		2\int^z  \Jin(z)  \dkf z &=  \dfrac{\Jin(z)^2 }{ 2} - \log\, \Jin(z),\quad{\rm and}\\
		2\int^z \Jin(z)^k \dkf z &= \dfrac{\Jin(z)^{k+1} }{ k+1} - \dfrac{\Jin(z)^{k-1} }{ k-1},\quad{\rm for}\quad k\geq 2.
\end{align}
We also have the normalization for $z\rightarrow +\infty$:
\begin{equation}
\int_{-1}^1 f(x) \log(z-x) \dkf x = \log z \int_{-1}^1 f(x) \dkf x + {\cal O}(z^{-1}).
\end{equation}
Note that:
\begin{equation}
\Jin(z) \sim \dfrac{1}{2z} + {\cal O}(z^{-3})\quad{\rm as}\quad z \rightarrow \infty,
\end{equation}
hence:
\begin{equation}
\log\Jin(z) = -\log z-\log 2 +{\cal O}(z^{-1})\quad{\rm as}\quad z\rightarrow \infty.
\end{equation}
\end{proof}

These formul\ae~can be generalized to other intervals, including in the complex plane, by using a straightforward change of variables:
\begin{align}
	{\cal L}_{(a,b)}f(z) 
	= \dfrac{\abs{b-a} }{ 2}& {\cal L}_{(-1,1)}\br[f\!\pr({\dfrac{b+a }{ 2} + \dfrac{b-a }{ 2}\diamond})]\pr(\dfrac{b+a-2z }{ b-a})\nonumber\\
	&\qquad+ \dfrac{\abs{b-a}  }{ 2 \pi} \log \dfrac{\abs{b-a} }{ 2} \int_{-1}^1 f\!\pr({\dfrac{b+a }{ 2} + \dfrac{b-a }{ 2}x}) \dkf x.
\end{align}

\section{Applications}\label{section:applications}

\subsection{The Faraday cage}\label{subsection:faraday}

The Faraday cage effect describes how a wire mesh can reduce the strength of the electric field within its confinement. This phenomenon was described as early as 1755 by Franklin~\cite[\S 2-18]{Kraus-92} and in 1836 by Faraday~\cite{Faraday-39}. While the description of the phenomenon is quite prevalent in undergraduate material on electrostatics, a standard mathematical analysis has been missing until only recently by Martin~\cite{Martin-470-20140344-14} and Chapman, Hewett and Trefethen~\cite{Chapman-Hewett-Trefethen-57-398-15}. In~\cite{Chapman-Hewett-Trefethen-57-398-15}, three different approaches are considered for numerical simulations: a collocated least squares direct numerical calculation, a homogenized approximation via coupling of the solutions at multiple scales, and an approximation by point charges determined by minimizing a quadratic energy functional.

In~\cite{Chapman-Hewett-Trefethen-57-398-15}, it is shown that the shielding of a Faraday cage of circular wires centred at the roots of unity is a linear phenomenon instead of providing exponential shielding as the number of wires tends to infinity for geometrically feasible radii, i.e.~radii that prevent overlapping. In their synopsis, it is claimed that a Faraday cage with any arbitrarily shaped objects will not provide considerably different shielding in the asymptotic limit. Here, we confirm this observation with infinitesimally thin plates of the same electrostatic capacity as wires\footnote{This corresponds to plates of width $4r$ where $r$ is the wire radius.} angled normal to the vector from the origin to their centres. Our numerical results are in excellent asymptotic agreement with those presented in~\cite{Chapman-Hewett-Trefethen-57-398-15}. Departing from the practical case of normal plates, we also consider infinitesimally thin plates angled tangential to the vector from the origin to their centres. In this case, we escape the practical material limit on the number of shields as an infinite number of plates can be modelled independent of radial parameter.

We seek to find the solution to the Laplace equation such that, in addition:
\begin{subequations}
\begin{align}
\Delta u({\bf x}) & = 0, &{\rm for}\quad{\bf x}\in \Omega,\\
u({\bf x}) & = u_0, &{\rm for}\quad {\bf x}\in\Gamma,\label{eq:FaradayCondition1}\\
u({\bf x}) & = \log| {\bf x} - {\bf y}| + {\cal O}(1), &{\rm as}\quad |{\bf x}- {\bf y}|\to 0,\\
u({\bf x}) & = \log| {\bf x} | + o(1), &{\rm as}\quad |{\bf x}|\to\infty.
\end{align}
\end{subequations}

Since this is a Dirichlet problem, we begin by splitting the solution $u = u^i + u^s$, where:
\begin{equation}
u^i({\bf x}) = \log|{\bf x}-{\bf y}| = 2\pi\Phi({\bf x},{\bf y}),
\end{equation}
is the source term with strength $2\pi$ located at ${\bf y} = (2,0)$, as in~\cite{Chapman-Hewett-Trefethen-57-398-15}. We represent $u^s$ in terms of a density with the single-layer potential equal to the effect of the logarithmic source. Alone, this represents a solution to the Laplace equation with Dirichlet boundary conditions on $\Gamma$. To satisfy condition~\eqref{eq:FaradayCondition1}, we augment our system to ensure there is a constant charge of zero on the wires and plates:
\begin{equation}
\int_\Gamma \left[\dfrac{\partial u^s}{\partial n}\right]{\rm\,d}\Gamma({\bf y}) = 0,
\end{equation}
though each wire may individually carry a different charge, and the unknown constant $u_0$ to accommodate this condition. Figure~\ref{fig:Laplace1} shows the numerical results for shielding by normal and tangential plates. Figure~\ref{fig:Laplace2} shows a plot of the convergence of the density coefficients and the field strength at the origin for various parameter values.

\begin{figure}[htbp]
\begin{center}
\begin{tabular}{cc}
\includegraphics[width=0.485\textwidth]{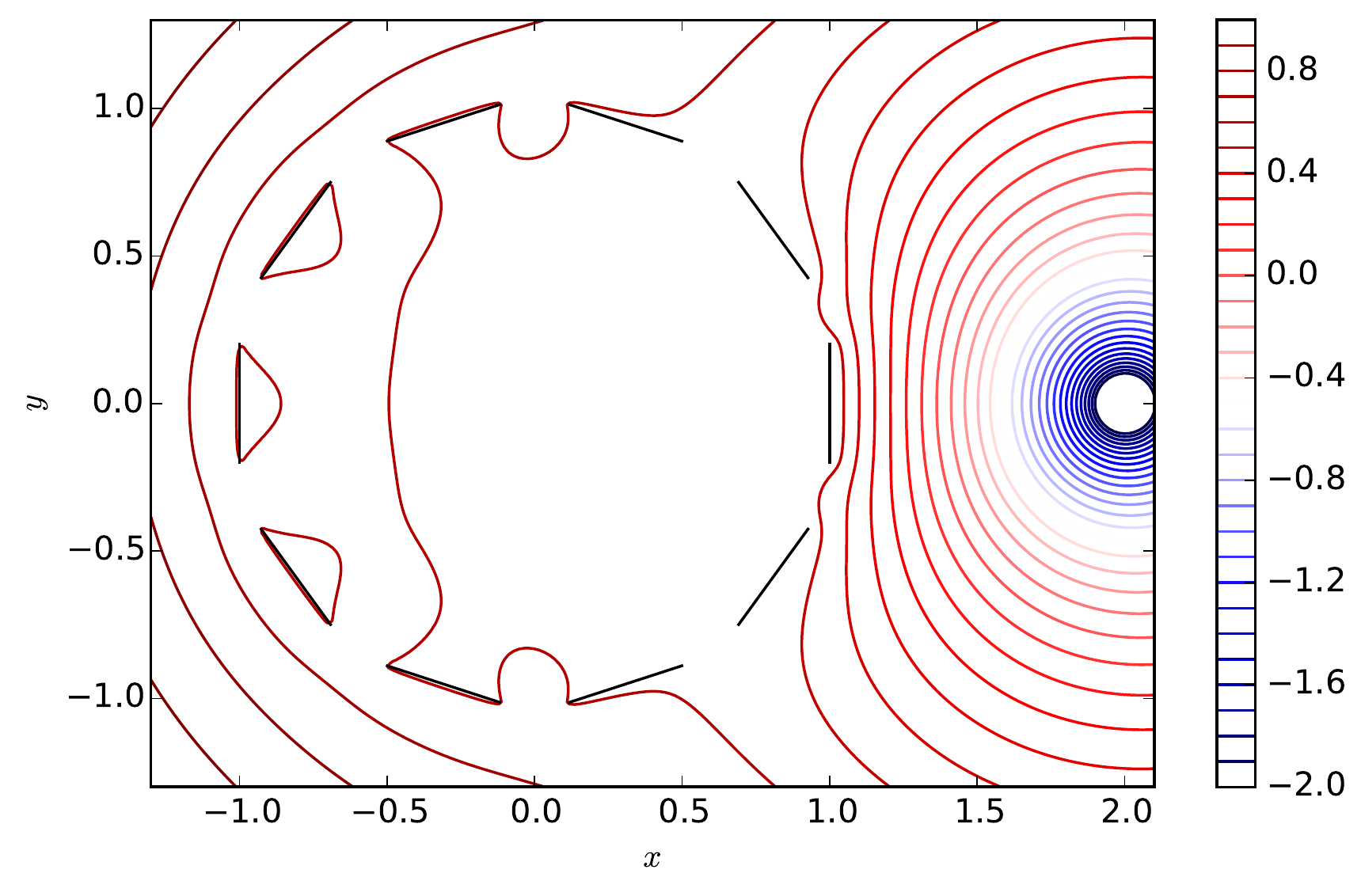}&
\includegraphics[width=0.485\textwidth]{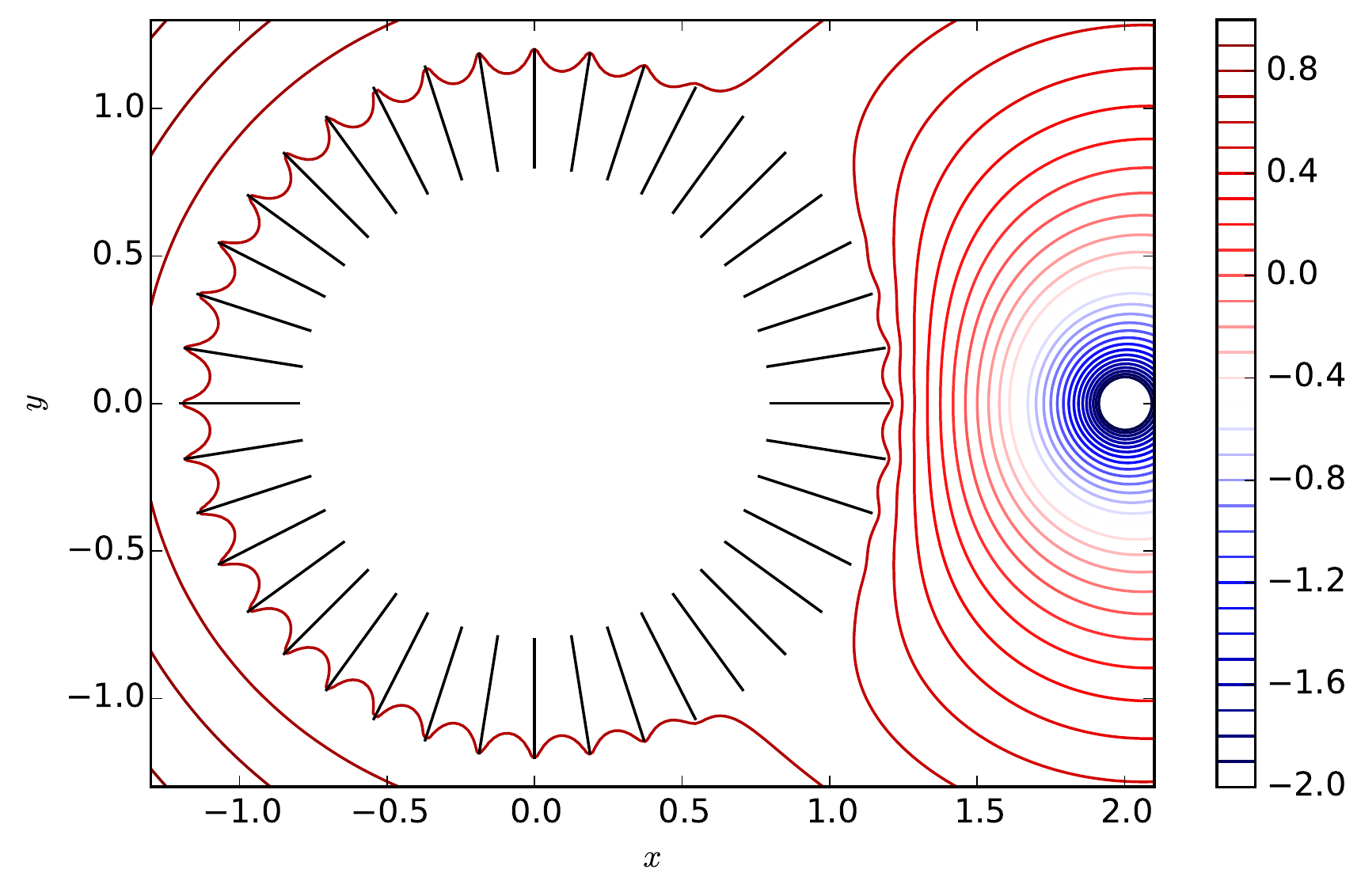}\\
\end{tabular}
\caption{Left: a plot of the solution $u({\bf x})$ with $10$ normal plates with radial parameter $r=10^{-1}$. Right: a plot of the solution $u({\bf x})$ with $40$ tangential plates with the same radial parameter, surpassing the material limit in the original numerical experiments~\cite{Chapman-Hewett-Trefethen-57-398-15}. In both contour plots, $31$ contours are linearly spaced between $-2$ and $+1$.}
\label{fig:Laplace1}
\end{center}
\end{figure}

\begin{figure}[htbp]
\begin{center}
\begin{tabular}{cc}
\includegraphics[width=0.5\textwidth]{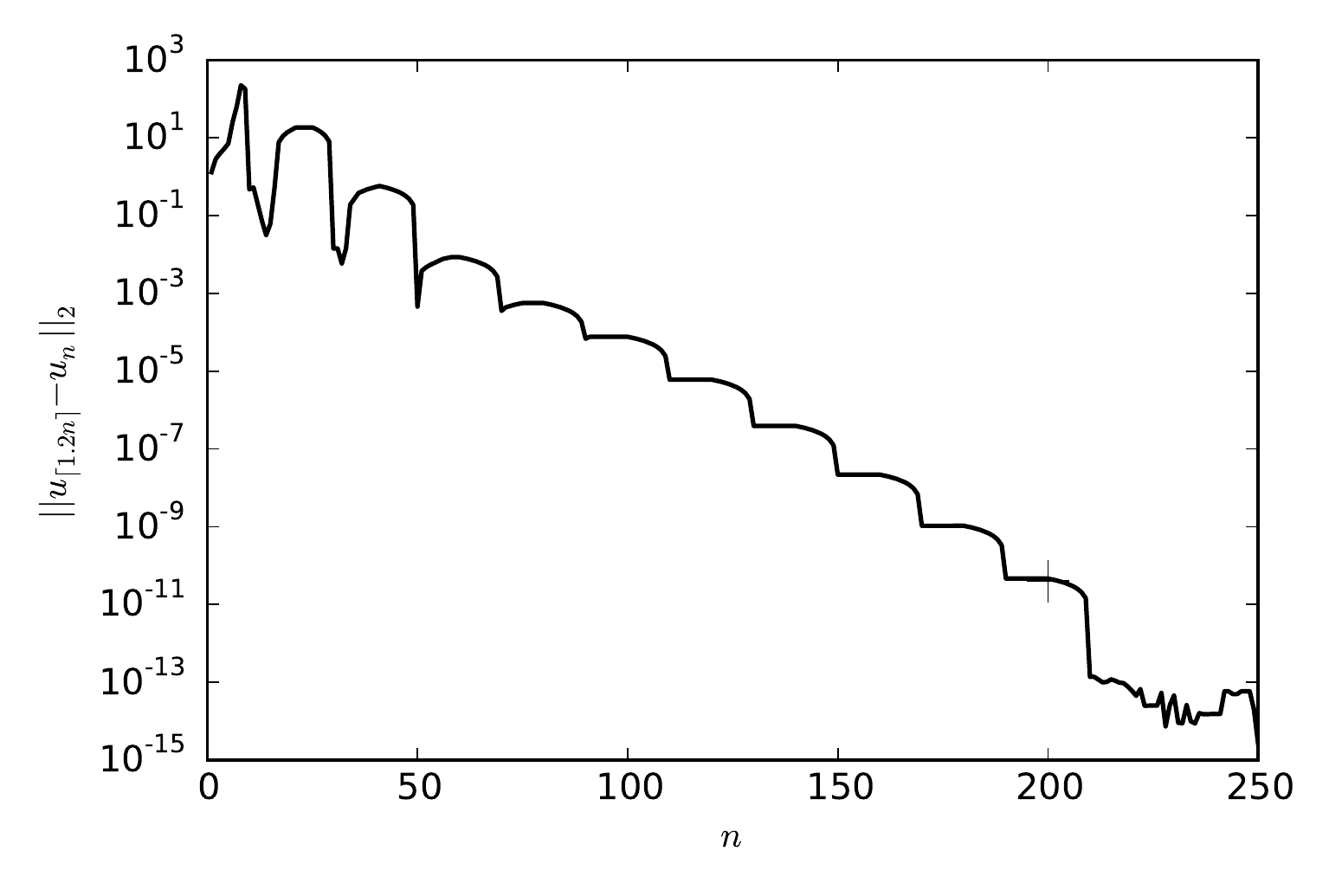}&
\includegraphics[width=0.445\textwidth]{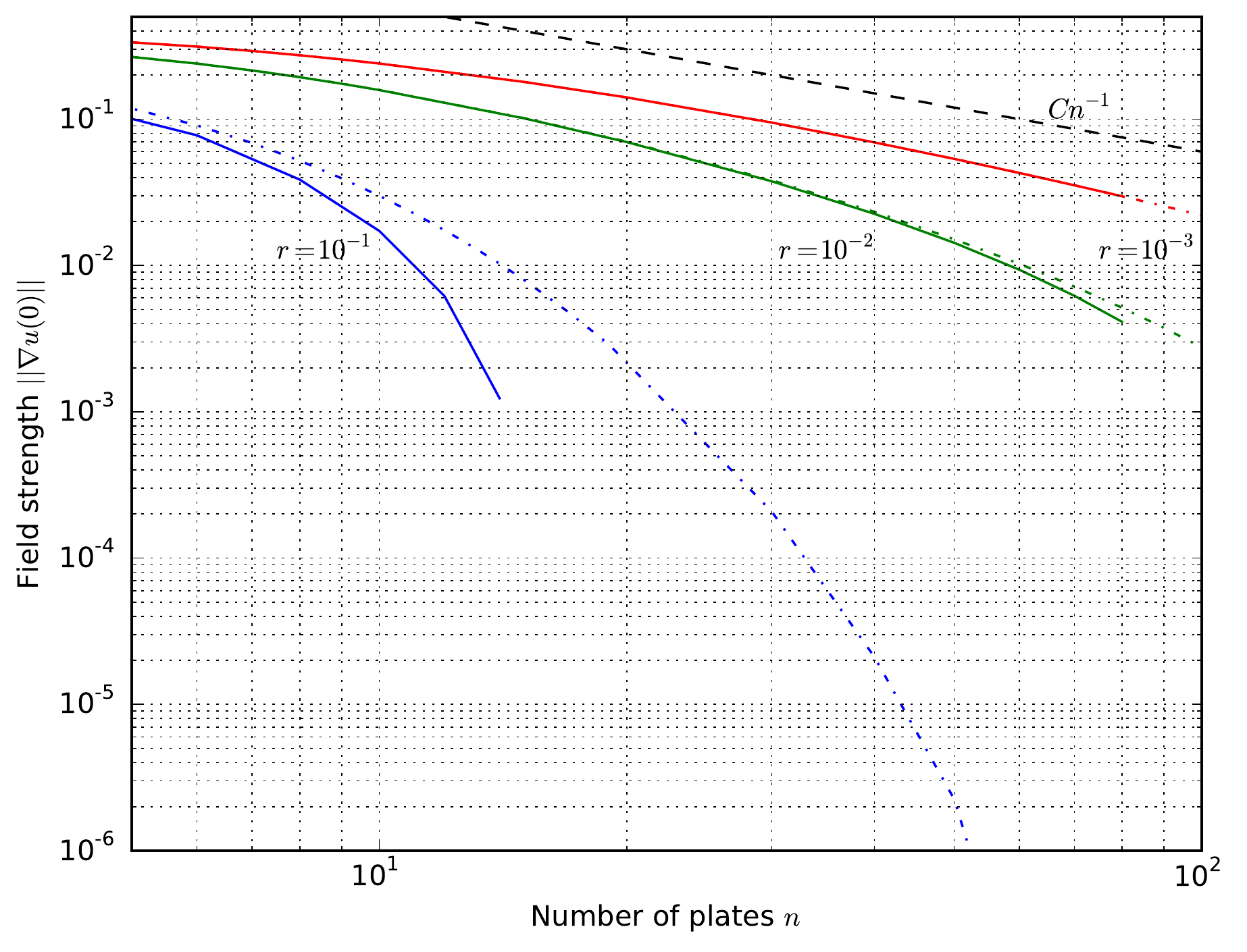}\\
\end{tabular}
\caption{Left: a plot of the Cauchy error of successive approximants for the solution of Laplace's equation with $10$ normal plates with $r=10^{-1}$, corresponding to the left plot in Figure~\ref{fig:Laplace1}, where $n$ is the total number of degrees of freedom. The $+$ indicates where the adaptive QR factorization terminates in double precision, and with this approximation the forward error is $\left\|u_0+{\cal S}_\Gamma[\partial u^s/\partial n]{\rm\,d}\Gamma({\bf y})-u^i\right\|_2 = 2.90\times10^{-15}$ and $\left\|\int_\Gamma[\partial u^s/\partial n]{\rm\,d}\Gamma({\bf y})\right\|_2 = 1.47\times10^{-15}$. Right: a plot of the field strength in the center of the cage versus the number of plates. The dashed lines represent results for normal plates, while the solid lines represent results for tangential plates of the same electrostatic capacity. The normal and tangential plates exhibit different asymptotic scalings.}
\label{fig:Laplace2}
\end{center}
\end{figure}

\subsection{Helmholtz equation with Neumann boundary conditions}\label{subsection:acousticscattering}

The mathematical treatment of the scattering of time-harmonic acoustic waves by infinitely long sound-hard obstacles in three dimensions with simply-connected bounded cross-sections leads to the exterior problem for the Helmholtz equation:
\begin{subequations}
\begin{align}
(\Delta + k^2)u({\bf x}) & = 0, &{\rm for}\quad k\in\mathbb{R},\quad{\bf x}\in \Omega,\\
\dfrac{\partial u({\bf x})}{\partial n({\bf x})} & = 0, &{\rm for}\quad {\bf x}\in\Gamma,\label{eq:NeumannCondition}\\
\lim_{r\to+\infty}\sqrt{r}\left(\dfrac{\partial u^s}{\partial r} - \I k u^s\right) & = 0, &{\rm for}\quad r := |{\bf x}|.\label{eq:SommerfeldCondition}
\end{align}
\end{subequations}
Equation~\eqref{eq:NeumannCondition} enforces sound-hard obstacles, while equation~\eqref{eq:SommerfeldCondition} is the Sommerfeld radiation condition~\cite{Sommerfeld-49}, an explicit radiation condition at infinity. Consider an incident wave with wavenumber $k$ and unit direction ${\bf d}$:
\begin{equation}
u^i({\bf x}) = e^{\I k{\bf d}\cdot {\bf x}}.
\end{equation}
We wish to find the scattered field $u^s$ such that the sum $u = u^i + u^s$ satisfies the Helmholtz equation in the exterior.

The fundamental solution of the Helmholtz equation is proportional to the cylindrical Hankel function of the first kind of order zero~\cite[\S 8.405]{Gradshteyn-Ryzhik-07}:
\begin{equation}
\Phi({\bf x},{\bf y}) = \dfrac{\rm i}{4}H_0^{(1)}(k|{\bf x}-{\bf y}|),
\end{equation}
and the Riemann function is also well known~\cite{Vekua-67} for the Helmholtz equation:
\begin{equation}
\mathfrak{R}(z,\zeta,z_0,\zeta_0) = J_0(k\sqrt{(z-z_0)(\zeta-\zeta_0)}).
\end{equation}
Figure~\ref{fig:Scattering} shows the rank structure of the bivariate kernels and the total solution with a set of randomly generated screens between $[-3,3]$. N.B. it is known that~\cite{Michielssen-Boag-Chew-143-277-96} collinear screens have reduced off-diagonal numerical ranks comparedwith randomly oriented screens.

\begin{figure}[htbp]
\begin{center}
\begin{tabular}{cc}
\includegraphics[width=0.375\textwidth]{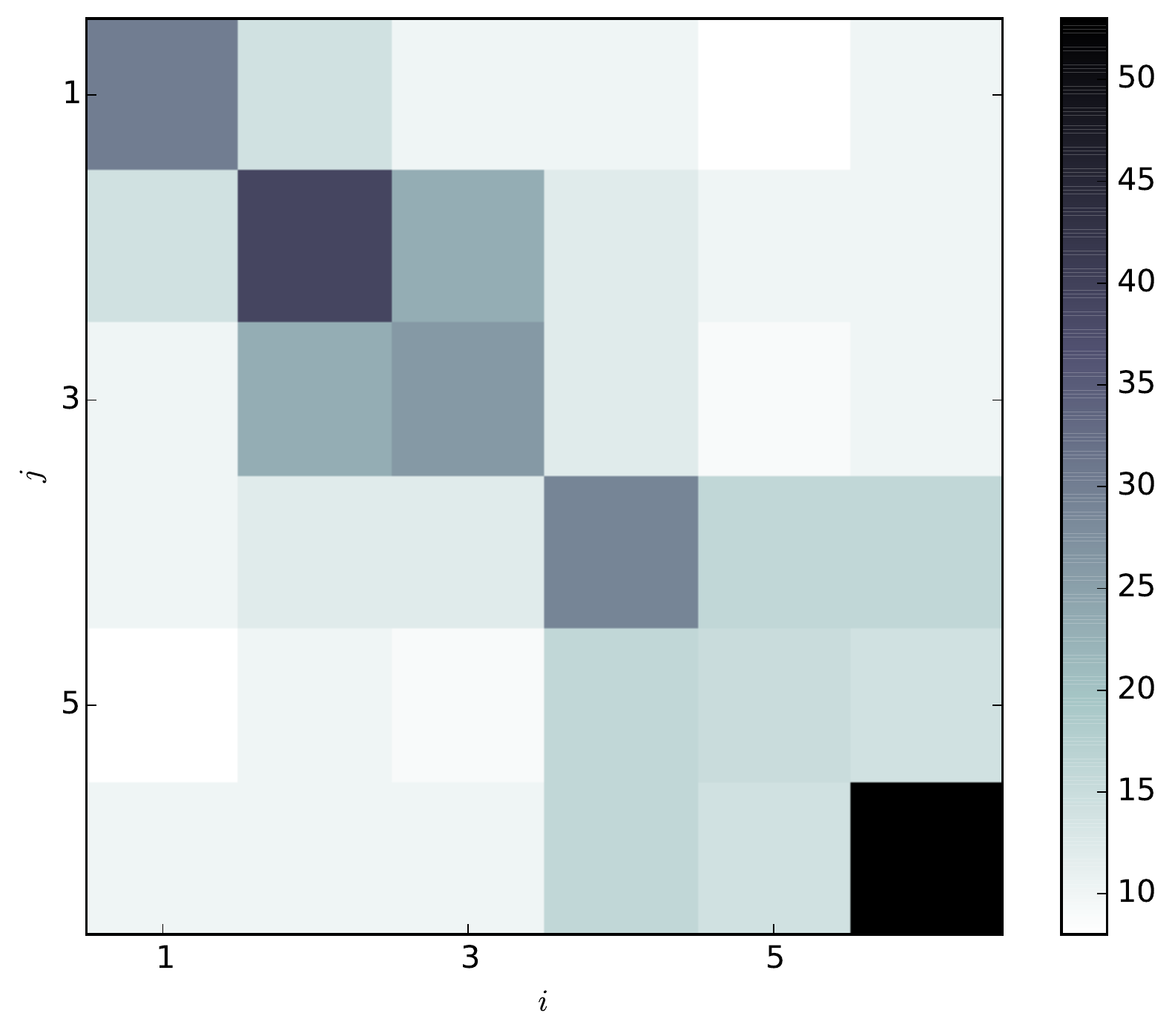}&
\includegraphics[width=0.575\textwidth]{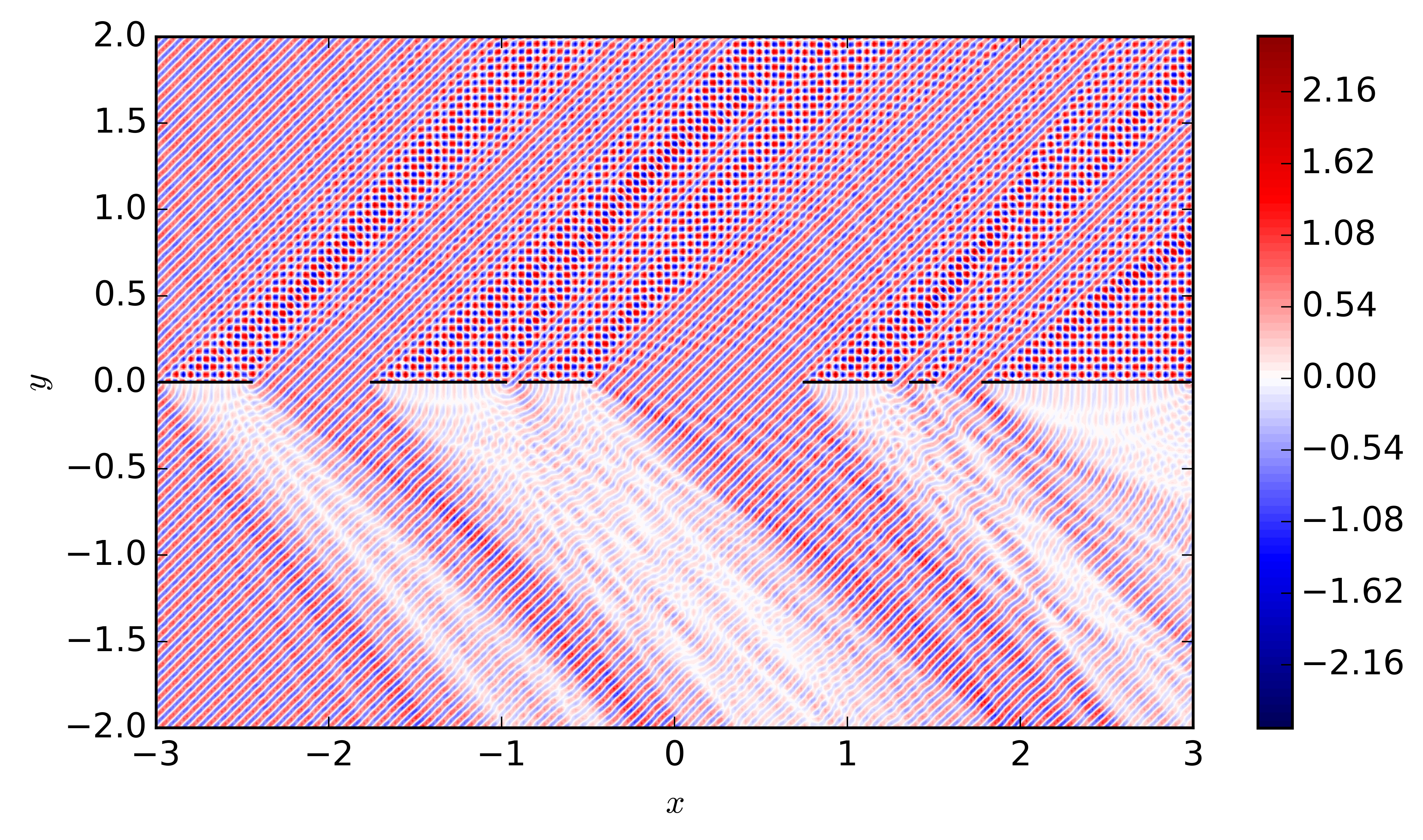}\\
\end{tabular}
\caption{Acoustic scattering with Neumann boundary conditions from an incident wave with $k=100$ and ${\bf d}=(1/\sqrt{2},-1/\sqrt{2})$. Left: a plot of the numerical ranks of $J_0(k|{\bf x}-{\bf y}|)$ connecting domain $i$ to domain $j$, where it can be seen that interaction between domains is relatively weaker than self-interaction. Right: a plot of the total solution. $1,\!392$ degrees of freedom are required to represent the piecewise density in double precision.}
\label{fig:Scattering}
\end{center}
\end{figure}

\subsection{Gravity Helmholtz equation with Dirichlet boundary conditions}

The Helmholtz equation in a linearly stratified medium:
\begin{subequations}
\begin{align}
(\Delta + E + x_2)u({\bf x}) & = 0, &{\rm for}\quad E\in\mathbb{R},\quad{\bf x}\in \Omega,\\
u({\bf x}) & = 0, &{\rm for}\quad {\bf x}\in\Gamma,\label{eq:DirichletCondition}\\
\lim_{x_2\to+\infty}\dfrac{1}{\sqrt{E+x_2}}\int_\mathbb{R}\left|\dfrac{\partial u}{\partial x_2} - \I \sqrt{E+x_2} u\right|^2{\rm\,d}x_1 & = 0, &\label{eq:BarnettConditiona}\\
\lim_{x_2\to-\infty}\int_\mathbb{R}\left|u\right|^2+\left|\dfrac{\partial u}{\partial x_2}\right|^2{\rm\,d}x_1 & = 0, &\label{eq:BarnettConditionb}\\
\lim_{L\to+\infty}\lim_{x_1\to\pm\infty}\int_{-L}^L\left|u\right|^2+\left|\dfrac{\partial u}{\partial x_1}\right|^2{\rm\,d}x_2 & = 0, &\label{eq:BarnettConditionc}
\end{align}
\end{subequations}
models quantum particles of fixed energy in a uniform gravitational field~\cite{Barnett-Nelson-Mahoney-297-407-15}. Equation~\eqref{eq:DirichletCondition} enforces sound-soft obstacles, while equations~\eqref{eq:BarnettConditiona}--\eqref{eq:BarnettConditionc} form an explicit radiation condition at infinity derived in~\cite{Barnett-Nelson-Mahoney-297-407-15}.

The fundamental solution of the Helmholtz equation in a linearly stratified medium is derived in~\cite{Bracher-et-al-66-38-98}:
\begin{equation}
\Phi({\bf x},{\bf y}) = \dfrac{1}{4\pi}\int_0^\infty \exp \I \left[\dfrac{|{\bf x}-{\bf y}|^2}{4t} + \left(E+\dfrac{x_2+y_2}{2}\right)t-\dfrac{1}{12}t^3\right]\dfrac{{\rm d}t}{t}.
\end{equation}
Numerical evaluation via the trapezoidal rule~\cite{Trefethen-Weideman-56-385-14} along a contour of approximate steepest descent on the order of $10^5$ evaluations per second is reported in~\cite{Barnett-Nelson-Mahoney-297-407-15}. This equation is also known as the gravity Helmholtz equation.

Consider an incident fundamental solution with energy $E$ and source ${\bf y}$:
\begin{equation}
u^i({\bf x}) = \Phi({\bf x},{\bf y}).
\end{equation}
We wish to find the scattered field $u^s$ such that the sum $u = u^i + u^s$ satisfies the gravity Helmholtz equation in the exterior. In addition to the fundamental solution, we require the Riemann function of the PDO. With the prospect of deriving a fast numerical evaluation in future work, we prove the following theorem in~\ref{appendix:Riemann}.
\begin{theorem}\label{theorem:GravityHelmholtzRiemann}
The Riemann function of the gravity Helmholtz equation, where $c(x_1,x_2) = E + x_2$ and therefore $C(z,\zeta) = \frac{E}{4}+\frac{z-\zeta}{8\I}$ has the power series:
\begin{equation}
\mathfrak{R}(z,\zeta,z_0,\zeta_0) = 1+ \sum_{i=1}^\infty\sum_{j=1}^\infty A_{i,j}(z-z_0)^i(\zeta-\zeta_0)^j,
\end{equation}
where the coefficients $A_{i,j}$ satisfy~\eqref{eq:RiemannIC}--\eqref{eq:RiemannRec}, and the integral representation:
\begin{align}
V(u,v) = \dfrac{1}{2\pi\I}\int_{\gamma-\I\infty}^{\gamma+\I\infty} &\dfrac{1}{\sqrt{s^2-u/4\I}} \exp\left\{8\I \tilde{E}\left((s^2-u/4\I)^{1/2}-s\right) \right.\nonumber\\
& \left.+ \frac{8\I}{3}\left(s^3-(s^2-u/4\I)^{3/2}\right) + (v-u)s\right\}{\rm\,d}s,\label{eq:GravityHelmholtzRiemannV}
\end{align}
where $\mathfrak{R}(z,\zeta,z_0,\zeta_0) = V(z-z_0,\zeta-\zeta_0)$ and where $\tilde{E} = \dfrac{E}{4} + \dfrac{z_0-\zeta_0}{8\I}$.
\end{theorem}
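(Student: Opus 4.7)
The plan is to dispatch the two claims separately. Because the gravity Helmholtz operator has $a\equiv b\equiv 0$, formulas~\eqref{eq:ellipticA} and~\eqref{eq:ellipticB} give $A\equiv B\equiv 0$, and substitution into~\eqref{eq:ellipticC} yields $C(z,\zeta) = \tilde{E}+(u-v)/(8\I)$ with $u=z-z_0$, $v=\zeta-\zeta_0$. The characterizing integral equation~\eqref{eq:RiemannIE} then collapses to
\begin{equation*}
V(u,v) + \int_0^u\!\int_0^v\!\left(\tilde{E}+\tfrac{t-\tau}{8\I}\right)V(t,\tau)\,\dkf\tau\,\dkf t = 1,
\end{equation*}
which has a unique entire solution by standard Picard iteration. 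It therefore suffices to check that each of the two proposed forms satisfies this equation, or equivalently the Goursat-type problem $V_{uv}+CV=0$ with $V(0,v)=V(u,0)=1$.

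For the power series, I would substitute the ansatz $V = 1+\sum_{i,j\ge 1}A_{i,j}u^iv^j$ directly into the reduced integral equation. The two boundary conditions are automatic (every displayed term vanishes on $u=0$ or $v=0$), and termwise integration together with matching coefficients of $u^iv^j$ produces the advertised recursion for $A_{i,j}$, with the initial data obtained by reading off the lowest-order terms. This step is routine bookkeeping.

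For the integral representation, the approach is to verify the Goursat-type problem directly. The condition $V(0,v)=1$ is transparent: at $u=0$ the radical $W=\sqrt{s^2-u/(4\I)}$ collapses to $s$, every exponent term beyond $(v-u)s$ cancels, and the integrand reduces to $e^{vs}/s$; closing the Bromwich contour to the left picks up the simple pole at $s=0$ with residue $1$. The condition $V(u,0)=1$ is obtained by deforming the contour to the saddle curve of the cubic exponent and evaluating the resulting integral. For the PDE, differentiation under the integral sign using $W_u=\I/(8W)$, $\Psi_u=W-\tilde{E}/W-s$, $\Psi_v=s$, and the identity $W^2=s^2-u/(4\I)$, should reorganize $\bigl(\partial_u\partial_v+\tilde{E}+(u-v)/(8\I)\bigr)\bigl[(1/W)e^{\Psi}\bigr]$ into a total $s$-derivative whose contribution to the vertical contour integral vanishes.

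The main obstacle is the consistent choice of the contour $\Re s=\gamma$ and the branch of $(s^2-u/(4\I))^{1/2}$. Both must be arranged so that the integrand decays as $\Im s\to\pm\infty$ uniformly in $(u,v)$ in a neighbourhood (to justify differentiation under the integral and the vanishing of the ``boundary'' terms from the exact $s$-derivative), so that the contour avoids the branch cut issuing from $s^2 = u/(4\I)$, and so that the left-closure used in verifying $V(0,v)=1$ remains legitimate. Once a consistent choice (depending on the signs of $\Re u$ and $\Re\tilde{E}$) and the principal branch are fixed, the remaining manipulations are essentially mechanical.
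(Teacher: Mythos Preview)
Your treatment of the power series is the same as the paper's: substitute the ansatz into the integral equation~\eqref{eq:RiemannIE}, match coefficients of $u^iv^j$ to obtain the recursion and initial data, and note that the factor $ij$ in front of $A_{i,j}$ forces super-geometric decay so that the series is entire.

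For the integral representation you diverge from the paper. The paper does not \emph{verify} the formula; it \emph{derives} it. After passing to $u=z-z_0$, $v=\zeta-\zeta_0$ it takes the Laplace transform in $v$, which converts the Goursat problem into the first-order linear PDE
\[
s\,\hat V_u + \tfrac{1}{8\I}\,\hat V_s + \bigl(\tilde E + \tfrac{u}{8\I}\bigr)\hat V = 0,\qquad \hat V(0,s)=\tfrac1s,
\]
where the boundary condition $V(u,0)=1$ enters through $V_u(u,0)=0$ when transforming $V_{uv}$. This first-order equation is solved by the method of characteristics, and inversion via the Bromwich integral gives~\eqref{eq:GravityHelmholtzRiemannV}. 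The paper's argument is thus constructive and dispatches both characteristic boundary conditions in one stroke: one becomes the initial condition $\hat V(0,s)=1/s$, the other is absorbed into the transform itself.

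Your verification route is legitimate in principle---you correctly invoke uniqueness for the Volterra/Goursat problem---and the checks of $V(0,v)=1$ (residue at $s=0$) and of the PDE (reorganizing the integrand into an exact $s$-derivative) are reasonable. The weak link is the step $V(u,0)=1$: ``deform to the saddle curve of the cubic exponent and evaluate'' is not a routine computation, and it is not evident that it produces exactly~$1$ without substantial work. A cleaner way to recover that condition within your framework is the initial-value theorem $V(u,0)=\lim_{s\to\infty}s\,\hat V(u,s)$, which follows from the large-$s$ expansion $W=s-u/(8\I s)+O(s^{-3})$ making the exponent $O(s^{-1})$; the paper's Laplace-transform derivation sidesteps the issue entirely.
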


Figure~\ref{fig:GravityHelmholtz} shows the total solution to the gravity Helmholtz equation with Dirichlet boundary conditions and the $2$-norm condition number of the truncated and preconditioned system.

\begin{figure}[htbp]
\begin{center}
\begin{tabular}{cc}
\includegraphics[width=0.44\textwidth]{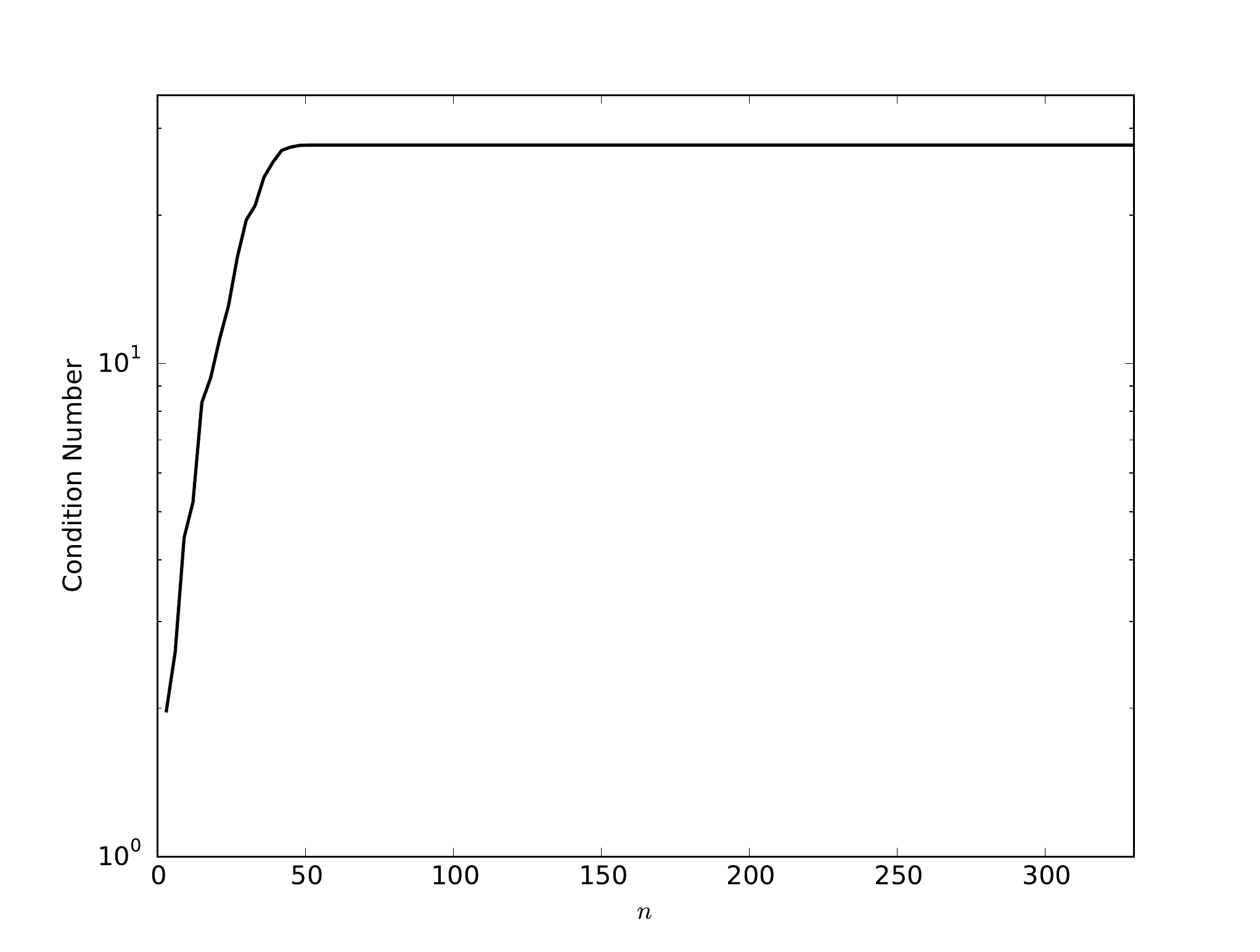}&
\includegraphics[width=0.53\textwidth]{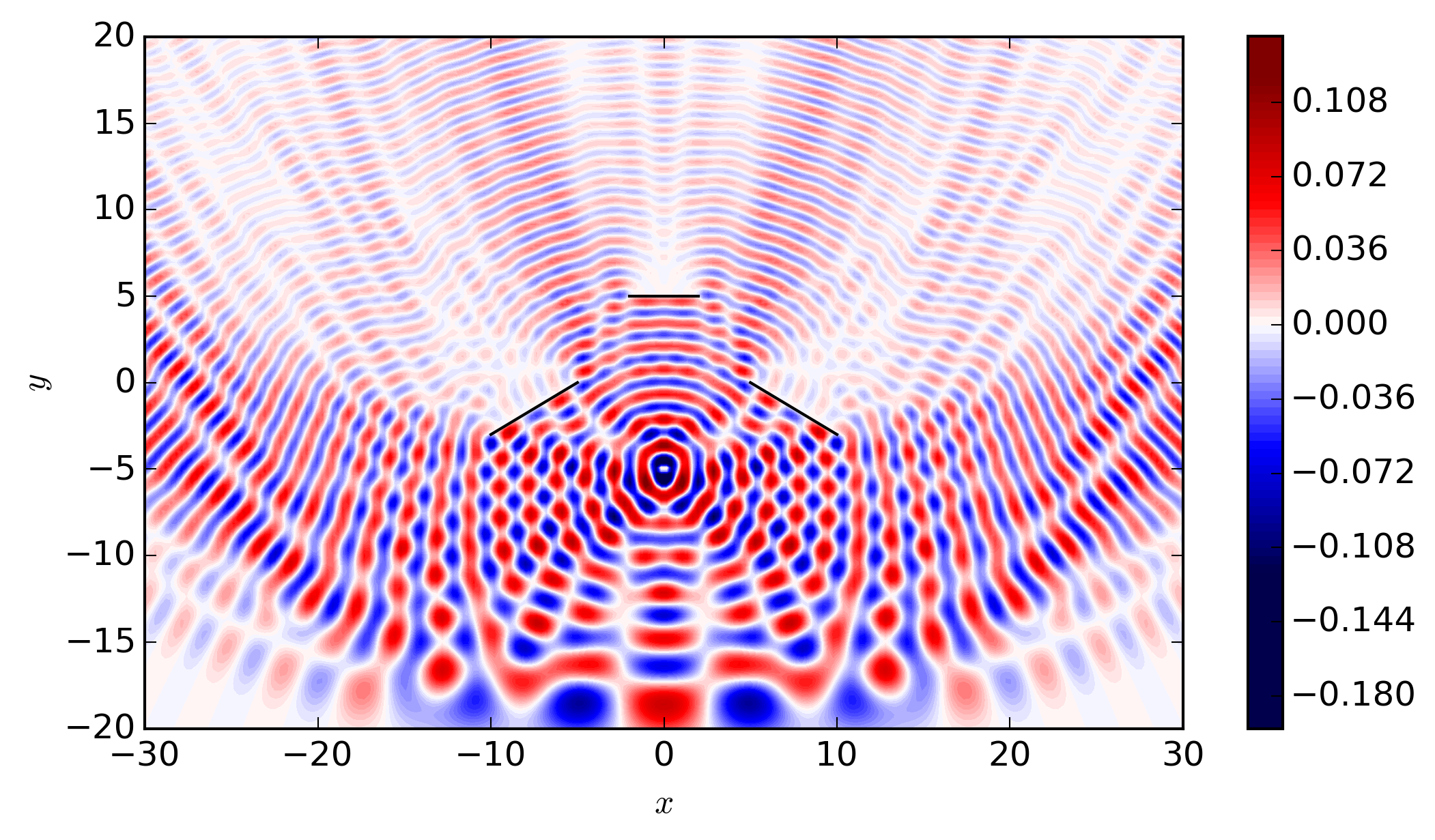}\\
\end{tabular}
\caption{Acoustic scattering with Dirichlet boundary conditions from an incident fundamental solution $\Phi({\bf x},{\bf y})$ with $y=(0,-5)$ and $E=20$ against the sound-soft intervals $((-10,-3),(-5,0))\cup((-2,5),(2,5))\cup((5,0),(10,-3))$. Left: a plot of the $2$-norm condition number of the truncated and preconditioned system with $n$ degrees of freedom. Right: a plot of the total solution. $332$ degrees of freedom are required to represent the piecewise density in double precision.}
\label{fig:GravityHelmholtz}
\end{center}
\end{figure}

\subsection{Helmholtz equation with nearly singular Dirichlet boundary data}

In this application, we consider the Helmholtz equation with nearly singular Dirichlet boundary data.    Consider the scattering of a collection of point sources arbitrarily close to a sound-soft obstacle.   If we parameterize the locations of the point sources by the family of Bernstein ellipses $E_\rho$, then we know that the Chebyshev series representation of the incident wave will have degree which scales as $n = {\cal O}((\log\rho)^{-1})$ as $\rho\to1$. Additionally, as the point sources approach the boundary, the integral operator has bandwidth ${\cal O}(k)$, independent of the Bernstein ellipse parameter. This is a challenging scenario for conventional integral equation solvers since a piecewise polynomial approximation to the nearly singular boundary data may not be much more efficient than a global representation. Furthermore, if the point sources are allowed to move freely on the Bernstein ellipse, then no adaptivity may be used to uniformly accelerate the solvers.  The demonstrations in this section are also applicable to the important problem of many micro swimmers in Stokes flow approaching an obstacle, as the swimmers can be modelled as point sources, see~\cite{Davis-Crowdy-66-53-12}.

This set of problems completely demonstrates the scaling ${\cal O}((m_x+m_y)^2n)$ of our algorithm: the bandwidth scales with the wavenumber, and the degree scales with the reciprocal of the log of the Bernstein ellipse parameter. Additionally, a partial $QR$ factorization of the singular integral operator may be cached or precomputed\footnote{The cached QR factorization can be adaptively grown without   re-computing from scratch by exploiting the fact that  the operator is banded below, thus the number of degrees of freedom ($n$) needed to resolve the solution within a prescribed tolerance need not be known apriori.  This automatic caching of the QR factorization is implemented in {\tt ApproxFun.jl}.}, resulting in the reduced ${\cal O}((m_x+m_y)n)$ complexity for additional solves.    Figure~\ref{fig:DirichletTiming} shows the scalings of the computation for three wavenumbers and varying Bernstein ellipse parameters. The figure also shows the solution of the Helmholtz equation with $100$ nearby source terms.

\begin{figure}[htbp]
\begin{center}
\begin{tabular}{cc}
\includegraphics[width=0.44\textwidth]{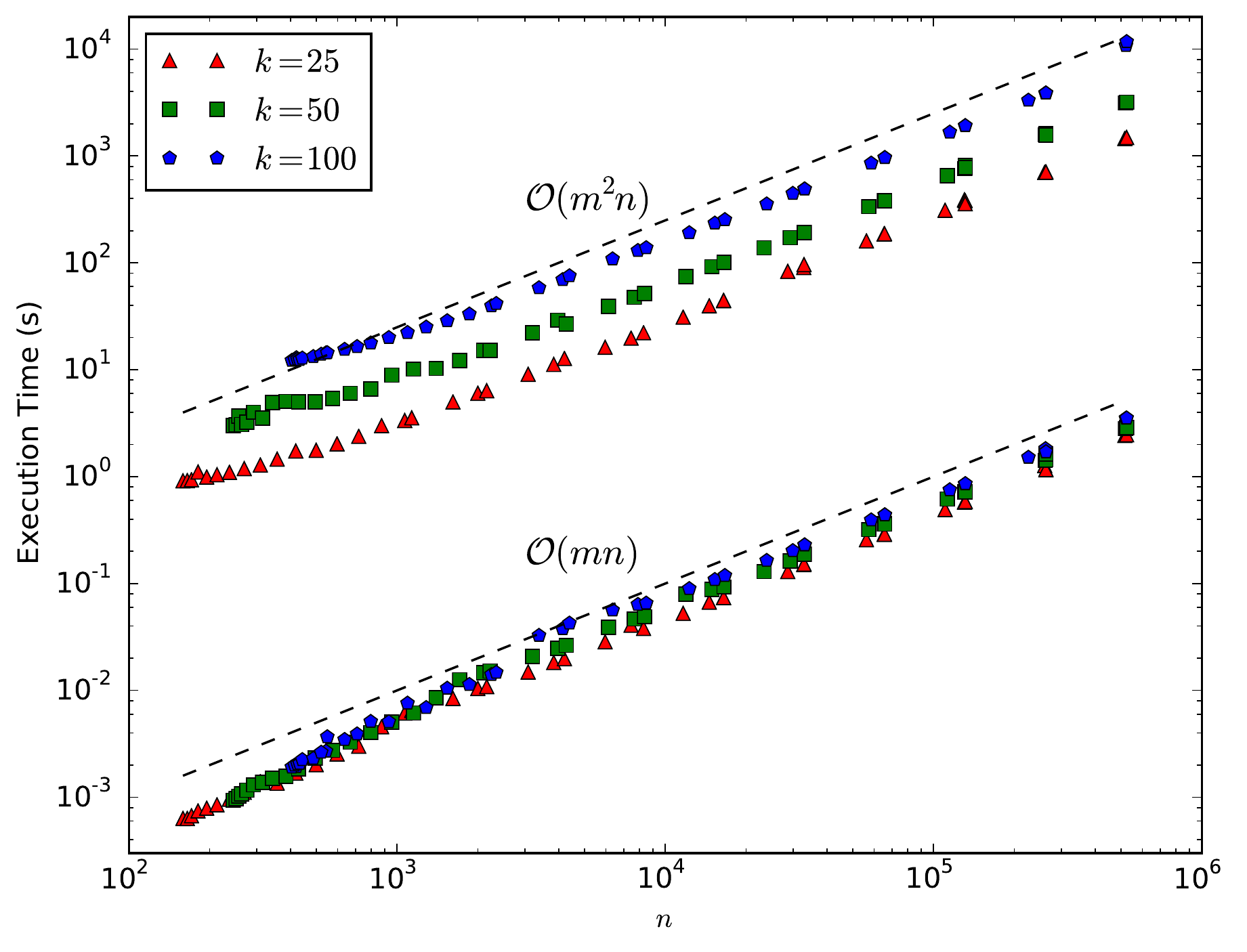}&
\includegraphics[width=0.53\textwidth]{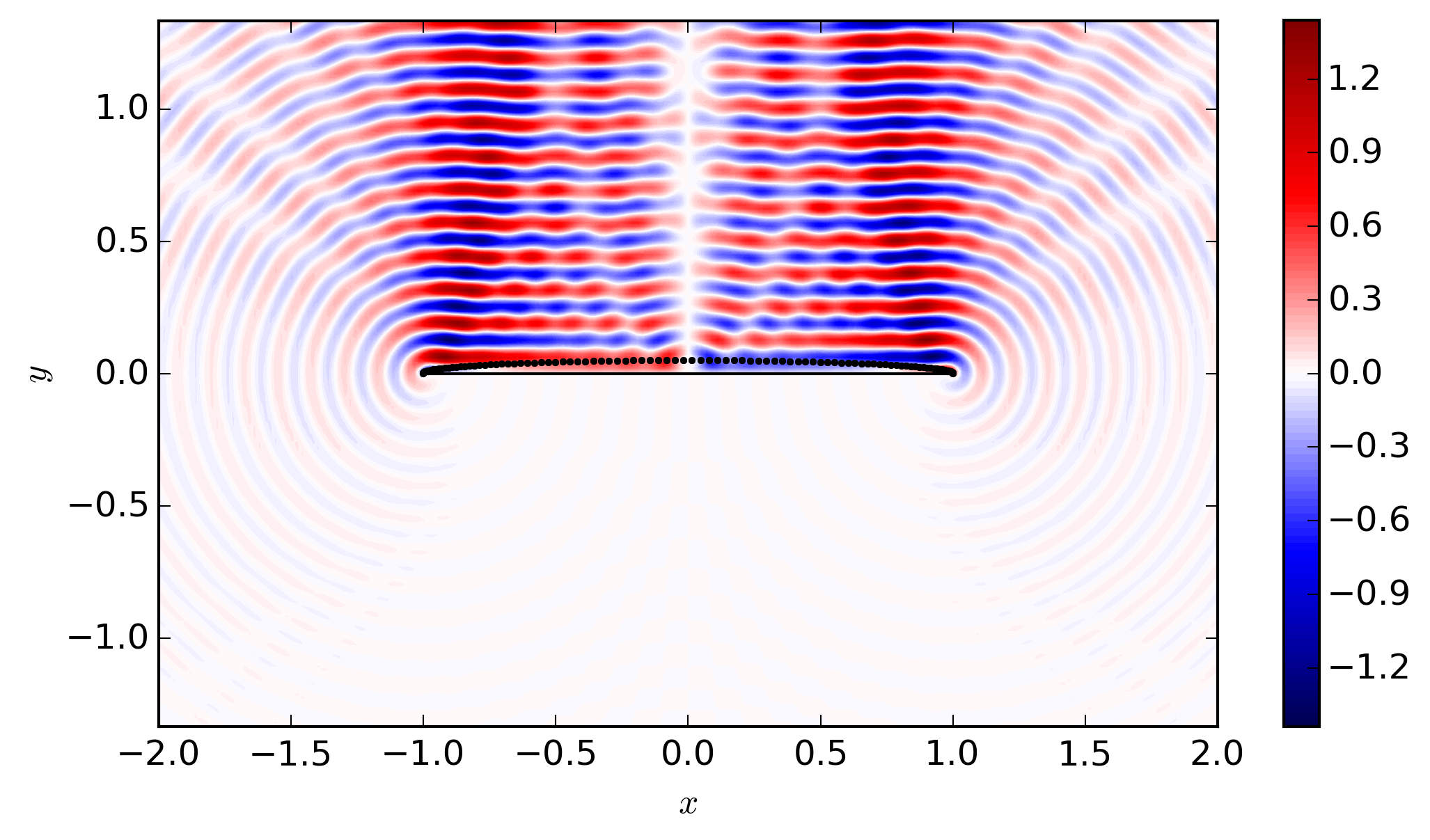}\\
\end{tabular}
\caption{Timings to solve the Helmholtz equation (left) with nearly singular boundary data, and a sample of the solution (right). Left: timings are illustrated for three wavenumbers and multiple Bernstein ellipse parameters resulting in Chebyshev expansions reaching degrees on the order of half a million. The high data set show the partial $QR$ factorization and the back substitution, while the low data set show the reduced time with a cached $QR$ factorization. Right: a sample solution at $k=50$, with $100$ point sources uniformly distributed in angle on the top half of the Bernstein ellipse $E_{1.05}$ with charges $+1$ on the right half and $-1$ on the left half, resulting in the symmetric output.}
\label{fig:DirichletTiming}
\end{center}
\end{figure}

\section{Numerical Discussion \& Outlook}

The software package {\tt SingularIntegralEquations.jl}~\cite{SingularIntegralEquations} written in the {\sc Julia} programming language~\cite{Julia-12,Julia-14} implements the banded singular integral operators, methods relating to bivariate function approximation and construction with diagonal singularities, fast \& spectrally accurate numerical evaluation of scattered fields and several examples including those described in this work. Built on top of {\tt ApproxFun.jl}, {\tt SingularIntegralEquations.jl} uses the adaptive QR factorization described in~\cite{Olver-Townsend-55-462-13} and acts as an extension to the framework for infinite-dimensional linear algebra. All numerical simulations are performed on a MacBook Pro with a $2.8$ GHz Intel Core i7-4980HQ processor and $16$ GB of RAM. While timings are continuously being improved, Table~\ref{table:timings} shows the current timings to solve the problems in section~\ref{section:applications}. All the numerical problems relating to our applications have been abstracted so that to explore a new elliptic PDE in {\tt SingularIntegralEquations.jl}, the user only needs a fast evaluation of the fundamental solution and its Riemann function.

\begin{table}[htbp]
\begin{center}
\caption{Calculation times in seconds to solve the problems in section~\ref{section:applications}. Evaluation of the scattered field is reported per target. Timings for the Laplace equation are for $10$ normal plates.}
\label{table:timings}
\begin{tabular}{rr@{.}lr@{.}lr@{.}l}
\sphline
& \multicolumn{2}{c}{Kernel assembly} & \multicolumn{2}{c}{Adaptive QR} & \multicolumn{2}{c}{Evaluation of scattered field}\\
\sphline
Laplace & 0&888 & 0&518 & 0&0000135\\
Helmholtz ($k=100$) & 1&73 & 67&6 & 0&00652\\
Gravity Helmholtz ($E=20$) & 3&11 & 1&20 & 0&0139\\
\sphline
\end{tabular}
\end{center}
\end{table}%
For problems involving a union of a considerably large number of domains, the current method of interlacing all operators can be improved. In future work on fractal screens motivated by~\cite{Chandler-Wilde-Hewett-14}, alternative algorithms based on hierarchical block diagonalization via a symmetrized Schur complement~\cite{Aminfar-Ambikasaran-Darve-14} may be explored specifically exploiting the low rank off-diagonal structure arising from coercive singular integral operators of elliptic PDOs.  This is close in spirit to the Fast Multipole Method \cite{Greengard-Rokhlin-73-325-87}, but applied to the banded representation of the singular integral operators, instead of discretizations arising from quadrature rules. A preliminary result in this direction is shown in the left side of Figure~\ref{fig:GreatBritishMetacage}.

\begin{figure}[htbp]
\begin{center}
\begin{tabular}{cc}
\includegraphics[width=0.48\textwidth]{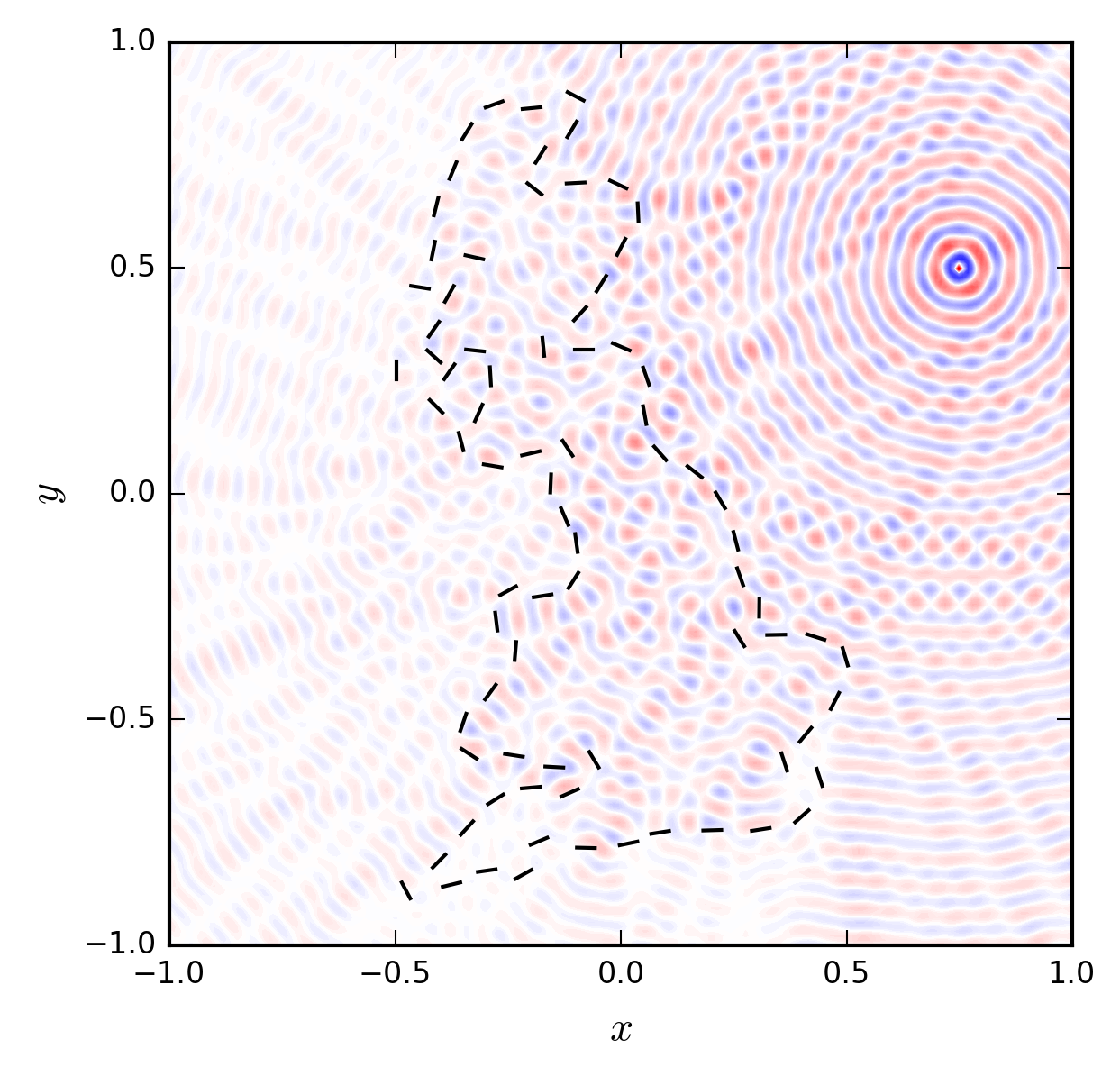}&
\raisebox{.25\height}{\includegraphics[width=0.48\textwidth]{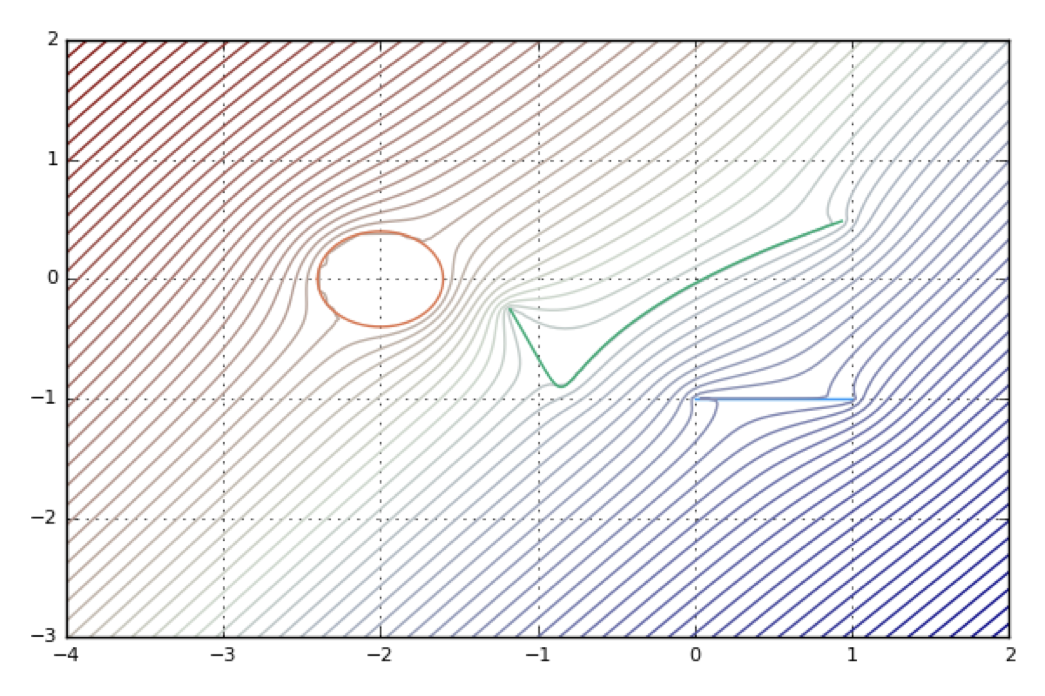} }\\
\end{tabular}
\caption{Left: An illustration of a scenario that benefits from the abstraction of hierarchical matrix factorizations to our hierarchical operators. The Dirichlet solution of the Helmholtz equation with $k=100$ and one incident source in the North Sea. Right: Idealized fluid flow around three obstacles.}
\label{fig:GreatBritishMetacage}
\end{center}
\end{figure}

As illustrated in subsection~\ref{subsection:acousticscattering} on the acoustic scattering of the Helmholtz equation with Neumann boundary conditions, {\tt SingularIntegralEquations.jl} supports higher order diagonal singularities. Future work may explore the feasibility of combining automatic differentiation and differentiation of Chebyshev interpolants to automate the construction of the operators with higher order singularities such that the user need only enter the fundamental solution with its logarithmic splitting described by~\eqref{eq:GFisAlogplusB}.


The approach developed in this article is also adaptable to other domains such as disjoint unions of circles and polynomial maps of intervals and circles, see the right side of Figure~\ref{fig:GreatBritishMetacage} for an example calculated using {\tt SingularIntegralEquations.jl} of idealized fluid flow over three  domains: an interval, a circle and a polynomial map of an interval.  To take into account circles, a similar analysis is straightforward with Laurent polynomials in place of weighted Chebyshev polynomials. However, a combined field formulation is beneficial to ensure well-conditioning when the solution of the exterior problem is near an eigenmode of the interior problem. Equations over  maps of the unit interval and circle   can also be reduced to numerically banded singular integral operators  via approximating the map by a polynomial and using the spectral mapping theorem.  The key formula in the Hilbert case is derived in \cite[Theorem 5.32]{Trogdon-Olver-15}, which implies that the   Hilbert transform over a polynomial map of the unit interval can be reduced to a compact perturbation of the Hilbert transform over the unit interval.  Expanding on this result, as well as adapting the procedure to log transforms, will be the topic of a subsequent publication.   Future work may consider the use of these modified Chebyshev series for banded operators when two disjoint contours are in close proximity.   When two or more contours coalesce, banded singular integral operators will depend on the ability to produce the orthogonal polynomials associated with that domain.  Densities of the single- and double-layer potentials will have singularities on domains with cusps. Such an analysis is undetermined.

As discussed in~\cite{Barnett-Nelson-Mahoney-297-407-15}, the fundamental solution of the gravity Helmholtz equation has an analogy to the Schr\"odinger equation with a linear potential. The Helmholtz equation with a parabolic refractive index shares the same analogy and the fundamental solution is also known~\cite{Constantinou-Thesis-91,Heller-91}. Parabolic refractive indices occur when considering the shielding of optical fibres, leading to Gaussian beams. Scattering problems in this context may shed light on the effects when optical fibres are occluded. Fast and accurate numerical evaluation of the fundamental solution as well as the Riemann function may also be possible via the trapezoidal rule.

An important area of future research is extending the method to higher dimensional singular integral equations. The ultraspherical spectral method was extended to automatically solve general linear partial differential equations on rectangles~\cite{Townsend-Olver-299-106-15} and the ideas used to do this successfully may well translate to singular integral equations.

\section*{Acknowledgments}

We wish to thank Jared Aurentz, Folkmar Bornemann, Dave Hewett, Alex Townsend and  Nick Trefethen for stimulating discussions related to this work. We acknowledge the generous support of the Natural Sciences and Engineering Research Council of Canada (RMS) and the Australian Research Council (SO).

\bibliography{/Users/Mikael/Bibliography/Mik}

\appendix

\section{Proof of Theorem~\ref{theorem:GravityHelmholtzRiemann}}~\label{appendix:Riemann}
To immediately satisfy the boundary conditions~\eqref{eq:Riemannbcs}, we start with the ansatz:
\begin{equation}
\mathfrak{R}(z,\zeta,z_0,\zeta_0) = 1+ \sum_{i=1}^\infty\sum_{j=1}^\infty A_{i,j}(z-z_0)^i(\zeta-\zeta_0)^j,
\end{equation}
and we insert it into the integral equation~\eqref{eq:RiemannIE}:
\begin{align}
&\sum_{i=1}^\infty\sum_{j=1}^\infty A_{i,j}(z-z_0)^i(\zeta-\zeta_0)^j \nonumber\\
& + \int_{z_0}^z\int_{\zeta_0}^\zeta \left(\frac{E}{4}+\frac{t-\tau}{8\I}\right)\left(1+\sum_{i=1}^\infty\sum_{j=1}^\infty A_{i,j}(t-z_0)^i(\tau-\zeta_0)^j\right){\rm\,d}\tau{\rm\,d}t = 0.
\end{align}
With the initial values:
\begin{equation}\label{eq:RiemannIC}
A_{1,1} = -\frac{E}{4}-\frac{(z_0-\zeta_0)}{8\I},\qquad A_{2,1} = -\dfrac{1}{16\I},\qquad A_{1,2} = \dfrac{1}{16\I}, \qquad A_{2,2} = A_{1,1}^2/4,
\end{equation}
and the additional values:
\begin{equation}
A_{i,1} = A_{1,i} = 0,\quad{\rm for}\quad i>2,\qquad A_{i,j} = 0,\quad{\rm for}\quad i\le0,j\le0.
\end{equation}
the coefficients are found to satisfy in general:
\begin{equation}\label{eq:RiemannRec}
ij A_{i,j} + \left(\frac{E}{4}+\frac{z_0-\zeta_0}{8\I}\right)A_{i-1,j-1} - \frac{1}{8\I}A_{i-1,j-2} + \frac{1}{8\I}A_{i-2,j-1} = 0.
\end{equation}
The growth in the constant in front of $A_{i,j}$  ensures that coefficients decay at least exponentially fast, hence the power series converges for all $z$ and $\zeta$.

To get an integral representation for the Riemann function, we start from the differential equation it satisfies after the change of variables $u=z-z_0$ and $v = \zeta-\zeta_0$:
\begin{equation}
\dfrac{\partial^2V}{\partial u\partial v} + \left(\tilde{E}+\frac{u-v}{8\I}\right)V = 0,\qquad \tilde{E} = \dfrac{E}{4}+\dfrac{z_0-\zeta_0}{8\I},
\end{equation}
together with $V(0,v) = V(u,0) = 1$.

Taking the Laplace transform:
\begin{equation}
{\cal L}\{f\}(s) = \int_0^\infty f(v)e^{-sv}{\rm\,d}v,
\end{equation}
of the differential equation, we obtain:
\begin{equation}
s\dfrac{\partial \hat{V}}{\partial u} + \dfrac{1}{8\I}\dfrac{\partial\hat{V}}{\partial s} + \left(\tilde{E} + \dfrac{u}{8\I}\right)\hat{V} = 0, \qquad \hat{V}(0,s) = \dfrac{1}{s}.
\end{equation}
Using the method of characteristics for this first-order PDE, we obtain the general solution as:
\begin{equation}
\hat{V}(u,s) = \exp\left(-8\I \tilde{E}s + \frac{8\I s^3}{3} - us + f(4\I s^2-u)\right).
\end{equation}
The particular solution satisfying the initial condition is:
\begin{align}
\hat{V}(u,s) = \dfrac{1}{\sqrt{s^2-u/4\I}}&\exp\left\{8\I \tilde{E}\left((s^2-u/4\I)^{1/2}-s\right) \right.\nonumber\\
& \left.+ \frac{8\I}{3}\left(s^3-(s^2-u/4\I)^{3/2}\right) - us\right\}.
\end{align}
Inverting the Laplace transform using the Bromwich integral, we find the solution~\eqref{eq:GravityHelmholtzRiemannV}.

\end{document}